\newcommand{\tmmathbf}[1]{\ensuremath{\boldsymbol{#1}}}
\newcommand{\tmop}[1]{\ensuremath{\operatorname{#1}}}
\newcommand{\tmstrong}[1]{\textbf{#1}}
\newtheorem{example}{Example}[section]
\newtheorem{note}[example]{Note}
\newtheorem{theorem}[example]{Theorem}
\newtheorem{corollary}[example]{Corollary}
\newtheorem{definition}[example]{Definition}
\newtheorem{proposition}[example]{Proposition}
\newtheorem{lemma}[example]{Lemma}
\def\Proof{\noindent \it Proof -- \rm}
\def\qed{\hspace{3.5mm} \hfill \vbox{\hrule height 3pt depth 2 pt width 2mm}
\bigskip}
\def\ca{{\rm ca}}
\def\rR{\diamond}
\def\Mould{{\bf Mould}}
\def\std{{\rm std}}
\def\raff{\succeq}
\def\O{{\mathcal O}}
\def\QSym{{\it QSym}}          % QSym
\def\NCSF{{\bf Sym}}           % NCSF
\def\FQSym{{\bf FQSym}}        % permutations
\def\MQSym{{\bf MQSym}}        % matrices
\def\WQSym{{\bf WQSym}}        % Mots initiaux
\def\FF{{\mathcal F}}
\def\LL{{\mathfrak g}}
\def\GG{{\mathcal G}}
\def\gautrid{\!\prec\!}   % gauche trigebre dendriforme
\def\miltrid{\circ}       % milieu trigebre dendriforme
\def\droittrid{\!\succ\!} % droite trigebre dendriforme
\def\P{{\bf P}}                 % 
\def\pack{{\rm pack}}          % packed word
\def\bm{\tmmathbf}
\def\ev{{\rm ev}}       % evaluation
\def\ssh{\Cup}          % shuffle shifte
\def\saug{\uplus}       % shuffle augmente
\def\sconc{\bullet}     % concatenation shiftee
\def\Std{{\rm std}}     % standardisation
\def\<{\langle}
\def\>{\rangle}
\def\NN{{\mathbb N}}    % entiers naturels
\def\ZZ{{\mathbb Z}}    % entiers relatifs
\def\QQ{{\mathbb Q}\, } % rationnels
\def\RR{{\mathbb R}}    % reels
\def\CC{{\mathbb C}}    % complexes
\def\KK{{\mathbb K}\, } % corps K
\def\park{{\bf a}} % les parking 
\def\F{{\bf F}}         % F de FQSym, PQSym, ...
\def\G{{\bf G}}         % G de FQSym^*
\def\SG{{\mathfrak S}}  % groupe symetrique
\def\maj{{\rm maj}}
\def\Des{\operatorname{Des}}
\def\convW{{*_W}}       % une loi de convolution
\def\Res{\operatorname{Res}}
\def\PW{{\rm PW}}
\def\shuff#1#2{\mathbin{
\hbox{\vbox{ \hbox{\vrule \hskip#2 \vrule height#1 width 0pt
}%
\hrule}%
\vbox{ \hbox{\vrule \hskip#2 \vrule height#1 width 0pt
\vrule }%
\hrule}%
}}}
\def\qbin#1#2{\begin{bmatrix} #1 \\ #2\end{bmatrix}}
\def\shuf{{\mathchoice{\shuff{7pt}{3.5pt}}%
{\shuff{6pt}{3pt}}%
{\shuff{4pt}{2pt}}%
{\shuff{3pt}{1.5pt}}}}%
\def\shuffle{\,\shuf\,}
\def\park{{\bf a}}    % une fonction de parking
\def\N{{\bf N}}
\def\M{{\bf M}}       % realisation avec matrices de PQSym
\def\MM{{\mathcal M}} % base duale des fonctions P catalanes
\def\TT{{\mathcal T}} % Arbre plan associe a un mot
\def\PP{{\mathcal P}}
\def\K{{\mathbb K}}   % corps de base
\def\R{{\mathbb R}}   % les Zentiers
\def\Sym{{\bf Sym}}   % NCSF
\def\fc{{\bf 1}}
\def\gf#1#2{\genfrac{}{}{0pt}{}{#1}{#2}}
\def\twoh{\twoheadrightarrow}
\def\hookr{\hookrightarrow}
\title[]{Mould calculus, polyhedral cones, and characters of combinatorial
Hopf algebras}
\author[F. Menous, J.-C.~Novelli and J.-Y.~Thibon]
{Fr\'ed\'eric Menous, Jean-Christophe Novelli and Jean-Yves Thibon}
\address[Menous]{Laboratoire de Math\'ematiques, B\^atiment 425, Universit\'e
Paris-Sud, 91405 Orsay Cedex, FRANCE}
\email[Fr\'ed\'eric Menous]{frederic.menous@math.u-psud.fr}
\address[Novelli and Thibon] {Universit\'e Paris-Est Marne-la-Vall\'ee \\
Laboratoire d'Infor\-ma\-tique Gaspard-Monge (CNRS - UMR 8049)\\
77454 Marne-la-Vall\'ee cedex 2 \\
FRANCE}
\email[Jean-Christophe Novelli]{novelli@univ-mlv.fr}
\email[Jean-Yves Thibon]{jyt@univ-mlv.fr} 
\keywords{Combinatorial Hopf algebras, Noncommutative symmetric functions,
Quasi-symmetric functions, Polyhedral cones, Mould calculus,
Resurgent functions}
\subjclass{16T30,05E05,18D50,40H05}
\date{}
\begin{document}

\begin{abstract}
We describe a method for constructing characters of combinatorial
Hopf algebras by means of integrals over certain polyhedral cones.
This is based on ideas from resurgence theory, in particular on the
construction of well-behaved averages induced by diffusion processes on the
real line.
We give several interpretations and proofs of the main result in terms of
noncommutative symmetric and quasisymmetric functions, as well as
generalizations involving matrix quasi-symmetric functions.
The interpretation of noncommutative symmetric functions as alien operators
in resurgence theory is also discussed, and a new family of Lie idempotents
of descent algebras is derived from this interpretation.
\end{abstract}

\maketitle
{\footnotesize
\tableofcontents
}

%%%%%%%%%%%%%%%%%%%%%%%%%%%%%%%%%%%%%%%%%%%%%%%%%%%%%%%%%%%%%%%%%%%%%%%%%%%%%%%
%%%%%%%%%%%%%%%%%%%%%%%%%%%%%%%%%%%%%%%%%%%%%%%%%%%%%%%%%%%%%%%%%%%%%%%%%%%%%%%
%%%%%%%%%%%%%%%%%%%%%%%%%%%%%%%%%%%%%%%%%%%%%%%%%%%%%%%%%%%%%%%%%%%%%%%%%%%%%%%
\section{Introduction}

The algebra $\Sym$ of noncommutative symmetric functions is a universal object.
Apart from providing noncommutative versions of various identities among
classical symmetric functions \cite{NCSF1}, it can  be interpreted in
representation theory as the Grothendieck ring of the tower of $0$-Hecke
algebras \cite{NCSF4}, or as the direct sum of the descent algebras of
symmetric groups (noncommutative analogues of the character rings)
\cite{NCSF1,MR}. In algebraic topology, it is the homology of the loop space
of the suspension of the infinite dimensional complex projective space
\cite{BR08}. It is also related to the geometry of polytopes \cite{BE}.  In
this context, it is also called the universal Leibniz Hopf algebra \cite{Haz}.
Actually, if one chooses as in \cite{ABS} to define a combinatorial Hopf
algebra as a graded connected Hopf algebra endowed with a character, then it
becomes an initial object in the corresponding category, and its dual $QSym$
(quasi-symmetric functions \cite{Ge}) is a terminal object.

The most interesting applications of noncommutative symmetric functions are
perhaps those related to the descent algebras, and particularly to the
so-called Lie idempotents. These are idempotents of the symmetric group
algebras which act on tensor algebras $T(V)$ as projectors onto the free Lie
algebra $L(V)$.  There is an extensive literature on the subject (see, {\it e.g.},
\cite{BBG,NCSF2}), often related to the Hausdorff series \cite{Dyn,So1} or
various formal expansions for the solutions of differential systems
\cite{MP,Wil}.

However, before the paper \cite{NCSF2}, only three examples of Lie idempotents
were known. These were the Dynkin/Specht/Wever idempotent \cite{Dyn,Spe,Wev},
the Solomon/Eulerian idempotent \cite{So1} (both related to the Hausdorff
series), and the Klyachko idempotent \cite{Kl}, which arose as an intertwining
operator between two representations of $GL(V)$.  The contribution of
\cite{NCSF2} was to show that all these examples were specializations of
natural families of noncommutative symmetric functions with one or more
parameters, and that their properties could be derived from noncommutative
analogues of classical manipulations with ordinary symmetric functions.  In
particular, the properties of the Klyachko idempotent appeared as the simplest
analogue of a specialization property of Hall-Littlewood functions at roots of
unity.  Also, Lie idempotents of descent algebras were characterized as the
(suitably normalized) primitive elements with a nonzero commutative image.

This provided a potentially infinite supply of Lie idempotents, and it was
tempting to assume that any new example that could arise would be readily
interpreted in terms of the constructions of \cite{NCSF2}.

This belief was to be turned down by a recently discovered interpretation of
$\Sym$.
On the occasion of a one-year seminar on Ecalle's mould calculus during the
academic year 2007-2008, it was realized that the algebra of noncommutative
symmetric functions (and some of its generalizations) played an important role
in the theory of resurgent functions. It arises in the guise of a Hopf algebra
of alien operators, which act on certain function spaces by means of intricate
combinations of analytic continuations. Among those are the alien derivations,
which turn out to correspond to Lie idempotents in $\Sym$.

The point of view of resurgence and mould calculus leads to insights different
from those from the theory of noncommutative symmetric functions.  For
example, a technique for constructing moulds with prescribed symmetries from
random walks on the real line leads to remarkable examples of Lie idempotents
given by explicit formulas.

The aim of the present paper is to explain these connections, and to try to
unify both points of view. We shall see in particular that the construction of
alien automorphisms (grouplike series in $\Sym$) by means of random walks can
be traced back to a geometric property of certain polyhedral cones, which
allows to interpret the calculation in other combinatorial Hopf algebras like
$\WQSym$ or $\MQSym$.

After recalling the relevant properties of noncommutative symmetric functions
(Section \ref{sec:NCSF}), we present in Section \ref{moyennes} 
the first version of our main result: grouplike and primitive series of
noncommutative symmetric functions can be constructed by means of certain
iterated integrals. To understand this property, we have to embed $\Sym$ in a
larger algebra, $\WQSym$, whose main properties are recalled in Section
\ref{moreCHA}. This algebra can be regarded as based on set compositions. In
Section \ref{cones}, we associate two polyhedral cones with a set composition,
and obtain our main result as a consequence of a geometric property:
the characteristic function of a Cartesian product of cones is an alternating
sum of characteristic functions of similar cones. Our first proof relies upon
certain multivariate Laurent series, the so-called integer point transforms of
the cones. These series represent rational functions in their domain of
convergence, and given such a function together with the corresponding domain,
the cone can be reconstructed unambiguously.  This representation of elements
of a combinatorial Hopf algebra by rational functions is reminescent of that
used in \cite{CHNT}, although of a different nature. The exact analogue of the
constructions of \cite{CHNT} are given in Section \ref{ratMould}, where
nonlinear operators  associated with elements of $\WQSym$ are constructed by
means of discrete iterated integrals. As in \cite{CHNT}, an operadic
interpretation is provided, and the tridendriform operad is related to
$\WQSym$ in a simple and explicit way.
This interpretation can also give rise to characters of $\WQSym$, and we
indicate briefly how to recover some familiar examples.
Next, the constructions of Section~\ref{sec:MQSym} are extended to the Hopf
algebra $\MQSym$, which may be interpreted as based on multiset compositions.
This allows to define more general polyhedral cones, to which the main result
is extended in Section \ref{sec:newcones}. In this context, it can be proved
by a short (but tricky) algebraic calculation, whose meaning is eventually
interpreted in terms of Rota-Baxter algebras (Section \ref{secRB}).
In Section \ref{catalan}, we present a class of iterated integrals which can
be evaluated in closed form, and obtain a new family of Lie idempotents, the
Catalan family.  Finally, we review the connections between noncommutative
symmetric functions and alien calculus (Section \ref{sec:alien}), and sketch
the proof of the isomorphism of Hopf algebras between noncommutative symmetric
functions and alien operators (Section \ref{sec:coprod}).

%%%%%%%%%%%%%%%%%%%%%%%%%%%%%%%%%%%%%%%%%%%%%%%%%%%%%%%%%%%%%%%%%%%%%%%%%%%%%%%
%%%%%%%%%%%%%%%%%%%%%%%%%%%%%%%%%%%%%%%%%%%%%%%%%%%%%%%%%%%%%%%%%%%%%%%%%%%%%%%
%%%%%%%%%%%%%%%%%%%%%%%%%%%%%%%%%%%%%%%%%%%%%%%%%%%%%%%%%%%%%%%%%%%%%%%%%%%%%%%
\section{Noncommutative symmetric functions}
\label{sec:NCSF}

%%%%%%%%%%%%%%%%%%%%%%%%%%%%%%%%%%%%%%%%%%%%%%%%%%%%%%%%%%%%%%%%%%%%%%%%%%%%%%%
\subsection{The Hopf algebra $\Sym$}

By definition, $\Sym$ is the free associative algebra $\K\<S_1,S_2,\dots\>$
over an infinite sequence $S_n$, endowed with the grading $\deg S_n=n$ and the
coproduct
\begin{equation}
\Delta S_n=\sum_{k=0}^nS_k\otimes S_{n-k} \quad (S_0:=1)\,,
\end{equation}
where $\K$ is a field of characteristic $0$.

It can be realized in terms of polynomials\footnote{By ``polynomials'', we
mean formal series of finite degree.}
over an auxiliary set $A=\{a_1<a_2<\dots\}$ of noncommuting variables
endowed with a total order.
If $t$ is another indeterminate, commuting with the $a_i$, we set
\begin{equation}
\sigma_t(A) :=
\prod^\rightarrow_{i\ge 1}(1-ta_i)^{-1}=
(1-ta_1)^{-1}(1-ta_2)^{-1}\dots
=\sum_{n\ge 0}S_n(A)t^n \quad 
\end{equation}
(the arrow means that the product should be taken from left to right)
so that
\begin{equation}
\lambda_{-t}(A) :=
\prod^\leftarrow_{1\le i}(1-ta_i)
= \dots(1-ta_2)(1-ta_1)
=\sum_{n\ge 0}\Lambda_n(A)(-t)^n
=\sigma_t(A)^{-1}\,.
\end{equation}
Then the coproduct can be expressed as
\begin{equation}
\Delta F=F(A+B)
\end{equation}
where $B=\{b_i|i\ge 1\}$ is another ordered alphabet isomorphic to $A$ and
where $A+B$ is interpreted as the {\em ordinal sum} of $A$ and $B$
(a noncommutative sum!), and $A$ commutes with $B$ for the multiplication.

As a Hopf algebra, $\Sym$ is not self-dual. This is clear from the definition
of the coproduct, which is obviously cocommutative. Its (graded) dual is the
commutative algebra $QSym$ of {\em quasi-symmetric functions} \cite{Ge,NCSF1}.

%%%%%%%%%%%%%%%%%%%%%%%%%%%%%%%%%%%%%%%%%%%%%%%%%%%%%%%%%%%%%%%%%%%%%%%%%%%%%%%
\subsection{Bases}
\label{bases}

From the generators $S_n$, we can form a linear basis
\begin{equation}
S^I = S_{i_1}S_{i_2}\dots S_{i_r}
\end{equation}
of the homogeneous component $\Sym_n$, parametrized by {\em compositions}
of $n$, that is, finite ordered sequences $I=(i_1,\dots,i_r)$ of positive
integers summing to $n$. One often writes $I\vDash n$ to mean that $I$
is a composition of $n$. The dimension of $\Sym_n$ is $2^{n-1}$
for $n\ge 1$.

Similarly, from the $\Lambda_n$ we can build a basis 
\begin{equation}
\Lambda^I = \Lambda_{i_1}\Lambda_{i_2}\dots \Lambda_{i_r}.
\end{equation}
We can also look for analogues of the power-sum symmetric functions. It is
here that the non-trivial questions arise. Indeed, in the commutative case,
the power-sum $p_n$ is, up to a scalar factor, the unique primitive element of
degree $n$.
Here, the primitive elements form a free Lie algebra, and it is not
immediately obvious to identify the ones which should deserve the name
``noncommutative power sums''.

However, we can at least give one example: the series $\sigma_t$ being
grouplike ($\Delta\sigma_t=\sigma_t\otimes\sigma_t$), its logarithm is
primitive, and writing
\begin{equation}
\log \sigma_t =\sum_{n\ge 1}\Phi_n \frac{t^n}{n}\,, 
\end{equation}
we can reasonably interpret $\Phi_n$ as a noncommutative power-sum. 

Finally, let us present a last basis of $\NCSF$ which is the analog of the
Schur functions in this context.
In terms of the auxiliary ordered alphabet $A$
\begin{equation}
S_n(A)=\sum_{i_1\le i_2\le\dots\le i_n}a_{i_1}a_{i_2}\dots a_{i_n}\,,
\end{equation}
is the sum of nondecreasing words.
Let us say that a word $w=a_{i_1}a_{i_2}\dots a_{i_n}$
has a {\em descent} at $k$ if $i_k>i_{k+1}$, and denote by $\Des(w)$
the set of such $k$ (the {\em descent set} of~$w$).

Thus, $S_n(A)$ is the sum of words of length $n$ with no descent. 
Now, obviously,
\begin{equation}
S^I= S_{i_1}S_{i_2}\dots S_{i_r}
\end{equation}
is the sum of words whose descent set is contained in 
\begin{equation}
\{i_1,i_1+i_2,\dots,i_1+i_2+\dots+i_{r-1}\}\,.
\end{equation}
We denote this set by $\Des(I)$ and call it the
\emph{descent set of the composition} $I$.
Symmetrically, we call $I$ the \emph{descent composition}, and write $I=C(w)$,
of any word of length $n$ having $\Des(I)$ as descent set.

The \emph{noncommutative ribbon Schur functions} are defined by
\begin{equation}
R_I(A)=\sum_{C(w)=I}w
\end{equation}
so that we have
\begin{equation}
S^I =\sum_{J\le I}R_J
\end{equation}
where $J\le I$ is the \emph{reverse refinement order}, which means
that $\Des(J)\subseteq\Des(I)$.

The product in the ribbon basis is given by
\begin{equation}\label{prodR}
R_I\cdot R_J= R_{I\cdot J}+R_{I\triangleright J}
\end{equation}
where 
\begin{equation}
I\cdot J=(i_1,\dots,i_r,j_1,\dots,j_s)\  \text{and}\
I\triangleright J=(i_1,\dots,i_{r-1},i_r+j_1,j_2,\dots,j_s)
\end{equation}
(note that
this formula is  obvious from the interpretation in terms of words).
For example, 
\begin{equation}\label{RRIJ}
R_{132}R_2=R_{1322}+R_{134}\,.
\end{equation}

%%%%%%%%%%%%%%%%%%%%%%%%%%%%%%%%%%%%%%%%%%%%%%%%%%%%%%%%%%%%%%%%%%%%%%%%%%%%%%%
\subsection{Descent algebras}
Let $(W,S)$ be a Coxeter system. One says that $w\in W$
has a descent at $s\in S$ if $w$ has a reduced word ending by $s$.
For $W=\SG_n$ and $s_i=(i,i+1)$, this means that $w(i)>w(i+1)$,
whence the terminology. In this case, we rather say
that $i$ is a descent of $w$.
Let  $\Des(w)$ denote the descent set of $w$, and for a subset
$E\subseteq S$, set
\begin{equation}
D_E = \sum_{\Des(w)=E} w \ \ \in \ZZ W \,.
\end{equation}
Solomon has shown \cite{Sol} that the $D_E$ span a $\ZZ$-subalgebra
$\Sigma(W)$ of $\ZZ W$. Moreover
\begin{equation}
D_{E'}D_{E''}=\sum_E c^E_{E'E''} D_E
\end{equation}
where the coefficients $c^E_{E'E''}$ are nonnegative integers.

In the case of $W=\SG_n$, we encode descent sets by compositions of $n$ as
explained above.
If $E=\{d_1,\dots,d_{r-1}\}$,
we set $d_0=0$, $d_r=n$ and $I=C(E)=(i_1,\dots,i_r)$, where $i_k=d_k-d_{k-1}$.
From now on, we shall write $D_I$ instead of $D_E$, and denote by
$\Sigma_n$ the descent algebra of $\SG_n$ (with coefficients in our
ground field $\K$).

Thus, $\Sigma_n$ has the same dimension as $\Sym_n$, and both have
natural bases labelled by compositions of $n$.

The natural correspondence $\Sigma_n\rightarrow \Sym_n$ is defined by the
linear map $\alpha:\, D_I \rightarrow R_I$.  
It allows to transport the product of the descent algebra to
$\Sym$. The resulting operation is the internal product $*$.
We set $F*G=0$ if $F$ and $G$ are homogeneous of different degrees, and
for technical reasons, we want our correspondence to be an
{\em anti-isomorphism}:
\begin{equation}
R_I* R_J= \alpha(D_JD_I)\,.
\end{equation}

If we map our letters $a_i$ to commuting variables $x_i$,
noncommutative symmetric functions are mapped to ordinary symmetric
functions. For example, the $\Lambda_n$ go to the familiar elementary
symmetric functions $e_n$ and the $\Phi_n$ to the power-sums
$p_n=\sum_ix_i^n$.

It is proved in \cite{NCSF1,NCSF2} that a homogeneous element
$\Pi_n\in\Sym_n$ is the image under $\alpha$ of a Lie idempotent
of the descent algebra $\Sigma_n$ if and only if it is primitive and
has as commutative image $\frac1n p_n$.

For example, $\frac1n\Phi_n$ corresponds to the Solomon (Eulerian) idempotent \cite{So1},
whilst $\Psi_n$ defined by the recurrence
\begin{equation}\label{eq:newtonpsi}
nS_n = S_{n-1}\Psi_1+S_{n-2}\Psi_2+\dots+\Psi_n 
\end{equation}
corresponds to the Dynkin operator
\begin{equation}
a_1a_2\dots a_n \mapsto [\dots[[a_1,a_2],a_3],\dots a_n]
\end{equation}
and is given by
\begin{equation}\label{eq:pnhook}
\Psi_n = R_n - R_{1,n-1}+ R_{1,1,n-2}- \dots
= \sum_{k=0}^{n-1}(-1)^k R_{1^k,n-k}\,.
\end{equation}

A nontrivial example is the one-parameter family of Lie idempotents
\begin{equation}
\label{EULID}
\varphi_n(q) =
\frac{1}{n} 
\sum_{\sigma\in\SG_n} \frac{(-1)^{d(\sigma)}}{\qbin{n-1}{ d(\sigma)}_q} \
q^{\maj(\sigma) - \binom{d(\sigma)+1}{2}} \,\sigma,
\end{equation}
where the major index
$\maj(\sigma)$ of a permutation is the sum of its descents.
One can prove that this family
interpolates between the Dynkin and Solomon idempotents
and gives back the Klyachko idempotent
when $q$ is a primitive $n$th root of unity (see \cite{NCSF2}). 
Further properties
of this idempotent, including a preLie expression, can be found in \cite{Cha2}.

%%%%%%%%%%%%%%%%%%%%%%%%%%%%%%%%%%%%%%%%%%%%%%%%%%%%%%%%%%%%%%%%%%%%%%%%%%%%%%%
%%%%%%%%%%%%%%%%%%%%%%%%%%%%%%%%%%%%%%%%%%%%%%%%%%%%%%%%%%%%%%%%%%%%%%%%%%%%%%%
%%%%%%%%%%%%%%%%%%%%%%%%%%%%%%%%%%%%%%%%%%%%%%%%%%%%%%%%%%%%%%%%%%%%%%%%%%%%%%%
\section{Iterated integrals and characters}
\label{moyennes}

We define in this section some special elements of $\Sym$ corresponding
to operators
originally introduced by Ecalle in the framework of real resummation and
alien calculus. For the sake of simplicity, we will stay within the
formalism of noncommutative symmetric functions and the link with
resummation will be explained at the end of this paper.

For a permutation $\sigma\in\SG_n$, consider the sequence of $\pm$
signs 
\begin{equation}
\mathbf{\varepsilon}
  \bullet=(\varepsilon_1,\dots,\varepsilon_{n-1},\bullet),
\end{equation}
where $\varepsilon_i$ is the sign of $\sigma(i+1)-\sigma(i)$. Note
that the symbol $\bullet$ is added so that the length of the sequence
is $n$. With this notation, it is clear that
\begin{equation}
\Des(\sigma)=\{1\leq i \leq n-1 \ ;\ \varepsilon_i=-\}.
\end{equation}
Let us now introduce the {\it signed ribbon basis} of $\Sym$,
 which is a slight modification
of the noncommutative ribbon Schur functions:
\begin{equation}
R_{\mathbf{\varepsilon} \bullet}=(-1)^{l(I)-1}R_I\quad
(R_\emptyset=1,\ R_{\bullet}=R_1),
\end{equation}
where $I$ is the composition such that
\begin{equation}
\Des(I)=\{1\leq i \leq n-1 \ ;\ \varepsilon_i=-\}.
\end{equation}
So, Equation (\ref{prodR}) reads
\begin{equation}
R_{\mathbf{a}\bullet}R_{\mathbf{b}\bullet}
= R_{\mathbf{a}+\mathbf{b}\bullet}-R_{\mathbf{a}-\mathbf{b}\bullet}.
\end{equation}
For example (compare (\ref{RRIJ})),
\begin{equation}\label{RR+-}
R_{-++-+\bullet}R_{+\bullet}=R_{-++-+++\bullet}-R_{-++-+-+\bullet}\,.
\end{equation}
Let us also define
\begin{equation}
\mathcal{E} = \bigcup_{n \geq 0} \mathcal{E}_n = \bigcup_{n \geq 0} \{
\tmmathbf{\varepsilon} = (\varepsilon_1, \dots, \varepsilon_n) \hspace{1em} ;
\hspace{1em} \varepsilon_i = \pm\}.
\end{equation} 

Our main goal here is to study the elements of $\Sym$ whose
coefficients in the basis $R_{\mathbf{\varepsilon} \bullet}$ 
are defined as probabilities.

Let $X_1, \dots, X_n$ be real, independent, identically distributed random
variables of density $f$, and define
\begin{equation}
m_f^{\varepsilon_1,\dots,\varepsilon_n}
 = P(\varepsilon_1 S_1>0,\dots, \varepsilon_n S_n >0),
\end{equation}
with $S_k=X_1+\dots+X_k$. These ``weights'' (whose collection is
called an average induced by diffusion in \cite{em1}) will allow us to
find new grouplike and primitive elements in $\Sym$. Their
structure does not depend on the fact that $f$ is a probability
density, since these probabilities can be defined as ``iterated
integrals'' of $f$ on some subsets of $\RR^n$, and make sense for
any bounded integrable function.
% whose restrictions to $\RR^+$ and $\RR^{-*}$ are continuous.

\begin{definition}
\label{defmf}   
We denote by $\fc_X$ the characteristic function of a subset $X$ of $\RR^n$, 
and set
$\sigma_+ =\tmmathbf{1}_{\RR^+}$,
and $\sigma_- =\tmmathbf{1}_{\RR^{- \ast}}$. 

Let $f\in \mathcal{A}=L^1 (\RR) \cap L^{\infty}(\RR)$ be a bounded integrable
function.
We set  $m^\emptyset=1$, $d^\emptyset=0$ and, for a nonempty sequence
of signs
$\tmmathbf{\varepsilon}= (\varepsilon_1 ,\dots ,\varepsilon_n)\in \mathcal{E}$, 
\begin{equation}
\label{eq-defmf}
m_f^{\tmmathbf{\varepsilon}} = \int_{\RR^n} f (x_1) \dots f (x_n)
   \sigma_{\varepsilon_1} (x_1) \sigma_{\varepsilon_2} (x_1 + x_2) \dots
   \sigma_{\varepsilon_n} (x_1 + \dots + x_n) d x_1 \dots d x_n.
\end{equation}
In the same way, for any sequence of signs
$\tmmathbf{\varepsilon}= (\varepsilon_1 ,\dots ,\varepsilon_n)\in \mathcal{E}$
of length at least $2$, we set
\begin{equation}\label{deps}
\begin{split}
d_f^{\tmmathbf{\varepsilon}}
& = \varepsilon_n \int_{\RR^n} f (x_1) \dots f (x_n) \\
&\ \ \sigma_{\varepsilon_1} (x_1) \sigma_{\varepsilon_2} (x_1 +
   x_2) \dots \sigma_{\varepsilon_{n - 1}} (x_1 + \dots + x_{n - 1})
   \delta (x_1 + \dots + x_n) dx_1\dots d x_n.
\end{split}
\end{equation}
where $\delta$ is the Dirac distribution concentrated at $0$
(\emph{i.e.}, $\int_X\delta(x)dx$ is 1 or 0 according to whether $X$ contains
$0$ or not).
\end{definition}

Thanks to the regularity of $f$, these integrals are well-defined and when $f$
is continuous at $0$, Equation~(\ref{deps}) still makes sense for sequences of
length $1$ and we set $d^+=-d^-=f(0)$. Otherwise, we can give any arbitrary
value to $d^+=-d^-$. We then have

\begin{theorem}
\label{thmRm}
Define formal series $R_f$, $L_f$ and $D_f$ of noncommutative symmetric
functions by
\begin{equation}
\begin{split}
R_f &= 1+\sum_{\tmmathbf{\varepsilon}\in \mathcal{E}}
       R_f^{\tmmathbf{\varepsilon} \bullet}
       R_{\tmmathbf{\varepsilon} \bullet},\\
L_f &= 1+\sum_{\tmmathbf{\varepsilon}\in \mathcal{E}}
       L_f^{\tmmathbf{\varepsilon} \bullet}
       R_{\tmmathbf{\varepsilon} \bullet},\\
D_f &= \sum_{\tmmathbf{\varepsilon}\in \mathcal{E}}
       D_f^{\tmmathbf{\varepsilon} \bullet}
       R_{\tmmathbf{\varepsilon} \bullet},
\end{split}
\end{equation}
where
\begin{equation}
\begin{array}{cccccc}
R_f^{\tmmathbf{\varepsilon} \bullet} & = & m_f^{\tmmathbf{\varepsilon}+} & 
&  & (R^{\emptyset} = 1),\\
L_f^{\tmmathbf{\varepsilon} \bullet} & = & - m_f^{\tmmathbf{\varepsilon}-}
&  &  & (L^{\emptyset} = 1),\\
D_f^{\tmmathbf{\varepsilon} \bullet} & = & d_f^{\tmmathbf{\varepsilon}+} &
= & - d_f^{\tmmathbf{\varepsilon}-} & (D^{\emptyset} = 0).
\end{array}
\end{equation}
Then $R_f$ and $L_f$ are grouplike and $D_f$ is primitive.
\end{theorem}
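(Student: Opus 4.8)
The plan is to reduce the three assertions to scalar identities among the weights $m_f$ and $d_f$ by first making the coproduct of the signed ribbon basis explicit, and then to prove those identities from the integral representation of Definition~\ref{defmf}. Throughout, for $\alpha,\beta\in\mathcal E$ and a sign $\eta$ I write $\alpha\,\eta\,\beta$ for the sequence obtained by inserting $\eta$ between $\alpha$ and $\beta$, so that e.g. $m_f^{\alpha+\beta+}$ denotes $m_f^{(\alpha_1,\dots,\alpha_p,\,+,\,\beta_1,\dots,\beta_q,\,+)}$.

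\emph{Step 1 (coproduct of signed ribbons).} From the classical ribbon coproduct together with the normalization $R_{\varepsilon\bullet}=(-1)^{l(I)-1}R_I$, I would establish that for $\varepsilon=(\varepsilon_1,\dots,\varepsilon_{n-1})$,
\begin{equation*}
\Delta R_{\varepsilon\bullet}=1\otimes R_{\varepsilon\bullet}+R_{\varepsilon\bullet}\otimes 1+\sum_{m=1}^{n-1}\varepsilon_m\,R_{\varepsilon_1\cdots\varepsilon_{m-1}\bullet}\otimes R_{\varepsilon_{m+1}\cdots\varepsilon_{n-1}\bullet},
\end{equation*}
where $\varepsilon_m\in\{+1,-1\}$ is read as a sign and $R_\bullet=R_1$. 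In the ordinal-sum picture a cut at a position $i\in\Des(I)$ gives a $\cdot$-splitting and a cut at $i\notin\Des(I)$ a $\triangleright$-splitting; carrying the sign $(-1)^{l-1}$ through both cases collapses the two types of cut into the single factor $\varepsilon_m$. One verifies this on $R_1,R_2,R_3,R_{11}$.

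\emph{Step 2 (reduction).} Writing $R_f=1+\sum_\varepsilon m_f^{\varepsilon+}R_{\varepsilon\bullet}$ and extracting, for each pair $\alpha,\beta\in\mathcal E$, the coefficient of $R_{\alpha\bullet}\otimes R_{\beta\bullet}$ on both sides of $\Delta R_f=R_f\otimes R_f$: on the left only the single interior cut $m=|\alpha|+1$ of the two words $\varepsilon=(\alpha,\pm,\beta)$ contributes (the boundary terms $1\otimes R$ and $R\otimes 1$ involve the factor $R_\emptyset=1$ and cannot produce two ribbons of positive degree), giving $m_f^{\alpha+\beta+}-m_f^{\alpha-\beta+}$; on the right one gets $m_f^{\alpha+}m_f^{\beta+}$. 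Since the $R_{\alpha\bullet}\otimes R_{\beta\bullet}$ are linearly independent, $R_f$ is grouplike iff $m_f^{\alpha+\beta+}-m_f^{\alpha-\beta+}=m_f^{\alpha+}\,m_f^{\beta+}$ for all $\alpha,\beta\in\mathcal E$. The same computation shows $L_f$ grouplike iff $m_f^{\alpha-\beta-}-m_f^{\alpha+\beta-}=m_f^{\alpha-}m_f^{\beta-}$, and, since only the reduced coproduct's interior cuts survive, $D_f$ primitive iff $d_f^{\alpha+\beta+}=d_f^{\alpha-\beta+}$; note that the $R_1$-coefficient is left unconstrained, consistent with the freedom in defining $d^+$ when $f$ is discontinuous at $0$.

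\emph{Step 3 (the cone identities) and the main obstacle.} Fix $x_1,\dots,x_{p+1}$ and set $s=x_1+\cdots+x_{p+1}$; in $m_f^{\alpha\pm\beta+}$ the last $q+1$ integrations see the \emph{shifted} partial sums $s+T_j$ (with $T_k=x_{p+2}+\cdots+x_{p+1+k}$), whereas in $m_f^{\beta+}$ they see $T_j$ alone. By Fubini all three quantities become integrals of the $\SG$-symmetric weight $\prod_{i=1}^{p+q+2}f(x_i)$ over polyhedral cones in $\RR^{p+q+2}$ defined by sign conditions on the $S_k$, so the grouplike identity is the geometric assertion that, against $\prod f$, the indicator of the product cone equals $\mathbf 1_{C_{\alpha+\beta+}}-\mathbf 1_{C_{\alpha-\beta+}}$. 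The base case $\alpha=\beta=\emptyset$ already displays the mechanism: $m_f^{++}-m_f^{-+}=(m_f^{+})^2$ reduces to $\int_{x_2<0,\,x_1+x_2\ge0}ff=\int_{x_1<0,\,x_1+x_2\ge0}ff$, which is the coordinate swap $x_1\leftrightarrow x_2$. I would prove the general identity either by encoding each cone by its integer-point transform (a multivariate rational function, from which a pointed cone is recoverable), reducing everything to a rational-function identity that yields the result for \emph{every} $f\in\mathcal A$ at once, or by splitting the cones along $S_{p+1}=0$ and matching the chambers by measure-preserving coordinate permutations. The $L_f$ identity then follows by applying the $R_f$ identity to $x\mapsto f(-x)$, and the $D_f$ identity from the same decomposition, the Dirac factor $\delta(S_{p+q+2})$ rendering the sign of $S_{p+1}$ immaterial. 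The hard part is exactly this cone identity for general $\alpha,\beta$: the cumulative shift by $s$ couples the two blocks, so no single global permutation works and one must produce a genuine sign-reversing, measure-preserving matching of chambers (equivalently, verify the integer-point-transform identity). This coupling is where essentially all the work lies, and it is precisely the geometric property the paper isolates.
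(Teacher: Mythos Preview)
Your Steps~1--2 are correct: the signed-ribbon coproduct formula holds, and the reductions to the scalar identities
\[
m_f^{\alpha+\beta+}-m_f^{\alpha-\beta+}=m_f^{\alpha+}m_f^{\beta+},\qquad
m_f^{\alpha-\beta-}-m_f^{\alpha+\beta-}=m_f^{\alpha-}m_f^{\beta-},\qquad
d_f^{\alpha+\beta+}=d_f^{\alpha-\beta+}
\]
are exactly what is at stake. The paper does not frame things this way; it instead embeds the problem in $\WQSym$ and proves the more general Theorem~\ref{th1Ku}, from which the $R_f$ case falls out (Corollary~\ref{corQSym}). Your integer-point-transform plan for the $R_f$ identity is precisely the tool the paper uses in Section~\ref{cones}, so on that point the approaches converge and you have correctly located the hard step.

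Your deduction of the $L_f$ case from the $R_f$ case via $f\mapsto f(-\,\cdot)$ is valid (the boundary discrepancy between $\sigma_+(-t)$ and $\sigma_-(t)$ is a null set) and is shorter than the paper's route, which instead passes through the product identity $R_f=L_f\cdot\sigma_{I(f)}$ (Proposition~\ref{propL}).

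There is, however, a genuine gap in your treatment of $D_f$. The sentence ``the Dirac factor $\delta(S_{p+q+2})$ rendering the sign of $S_{p+1}$ immaterial'' is not an argument. The $R_f$ identity you derived is an \emph{integral} identity, obtained only after integrating the symmetric weight $\prod f(x_i)$; it is not a pointwise equality of indicator functions (on the left the $\beta$-block constraints involve $S_{p+1+j}$, on the right they involve $S_{p+1+j}-S_{p+1}$), so you cannot simply restrict it to the null hyperplane $\{S_n=0\}$ and read off the $D_f$ identity. Even the base case $d_f^{++}=d_f^{-+}$ does not follow from $S_2=0$ alone but from the reflection $x_1\mapsto -x_1$ on that hyperplane. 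The paper handles $D_f$ by two different mechanisms, neither of which is your restriction idea: either the differential equation $\partial_tR_{f_t}=(YD_{f_t})R_{f_t}$, established for $f\in\mathcal C^\infty_0$ by an integration-by-parts recursion and then extended by density, from which primitivity follows formally once all $R_{f_t}$ are known to be grouplike (Lemma~\ref{lemD}); or the Rota--Baxter framework of Section~\ref{secRB}, where primitivity reduces to the elementary fact that $(P_+a*P_+b)(0)=0$. You would need to supply an argument of this kind.
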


\Proof
The only difficulty is to prove that $R_f$ is grouplike.
Then, the corresponding property of $L_f$ is implied by
Proposition~\ref{propL}, and the primitivity of $D_f$ follows from
Lemma~\ref{lemD}.

Different proofs and generalizations of the fact that $R_f$ is grouplike are
given in the sequel.
We shall first obtain it as a corollary of Proposition~\ref{propCu},
which is itself equivalent to Theorem \ref{th1Ku}. This result
provides in fact a general method of constructing characters of
a certain Hopf algebra $\WQSym$, whose definition is recalled in
Section~\ref{moreCHA}.
Then a different method of constructing characters of $\WQSym$
is presented in Section~\ref{ratMould}. Finally, Theorem \ref{thMQSym}
generalizes Theorem \ref{th1Ku} to the  algebra $\MQSym$. In this context,
the result can be proved by a compact algebraic calculation, whose
meaning is then traced back to the theory of Rota-Baxter algebras
(Section \ref{secRB}).

\begin{proposition}
\label{propL}
If $I(f)=\int f(x) dx$, then
\begin{equation}
\label{RL}
R_f=L_f\cdot \sigma_{I(f)},
\end{equation}
where $\sigma_z=\sum_{n\ge 0} z^n S_n$ is the generating series
of complete noncommutative symmetric functions.
\end{proposition}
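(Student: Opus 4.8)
The plan is to establish the algebraic identity (\ref{RL}) by comparing coefficients in the signed ribbon basis: I will show that for every $\tmmathbf{\eta}=(\eta_1,\dots,\eta_N)\in\mathcal{E}$ the coefficient of $R_{\tmmathbf{\eta}\bullet}$ in $L_f\cdot\sigma_{I(f)}$ equals $R_f^{\tmmathbf{\eta}\bullet}=m_f^{\tmmathbf{\eta}+}$. Writing $z=I(f)$, two elementary facts do all the work. First, $\sigma_++\sigma_-=\fc$ pointwise on $\RR$, equivalently $\sigma_+=\fc-\sigma_-$; this is the only analytic input. Second, integrating one extra variable with no sign constraint in (\ref{eq-defmf}) simply multiplies the integral by $z$, and this matches the algebraic side because $S_k=R_{+^{k-1}\bullet}$ and $\sigma_z=\sum_{k\ge0}z^kS_k$.

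On the analytic side I would telescope $m_f^{\tmmathbf{\eta}+}$ along a trailing run of $+$ signs. The constraint string of $m_f^{\tmmathbf{\eta}+}$ is $(\eta_1,\dots,\eta_N,+)$; let $r$ be the length of the trailing $+$-run of $\tmmathbf{\eta}$, so the augmented string ends in $r+1$ plus signs. Writing $\eta^{(k)}=(\eta_1,\dots,\eta_{N-k})$ for the (possibly empty) length-$(N-k)$ prefix, and using repeatedly $\sigma_+=\fc-\sigma_-$ together with $L_f^{\tmmathbf{\varepsilon}\bullet}=-m_f^{\tmmathbf{\varepsilon}-}$, each peel splits off one $L$-term and leaves a shorter integral, giving
\[
m_f^{\tmmathbf{\eta}+}=\sum_{k=0}^{r}z^k\,L_f^{\eta^{(k)}\bullet}+z^{\,r+1}\,m_f^{\eta^{(r)}}.
\]
It then remains to evaluate the remainder. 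If $r=N$ the run exhausts $\tmmathbf{\eta}$, $\eta^{(r)}$ is empty, $m_f^{\emptyset}=1$, and the remainder is $z^{N+1}$; if $r<N$ then $\eta^{(r)}=(\eta^{(r+1)},-)$ ends in a minus, so $m_f^{\eta^{(r)}}=m_f^{\eta^{(r+1)}-}=-L_f^{\eta^{(r+1)}\bullet}$ and the remainder is $-z^{\,r+1}L_f^{\eta^{(r+1)}\bullet}$.

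On the algebraic side I would expand $L_f\cdot\sigma_z$ and extract the coefficient of $R_{\tmmathbf{\eta}\bullet}$ using the signed-ribbon product rule $R_{\a\bullet}R_{\b\bullet}=R_{\a+\b\bullet}-R_{\a-\b\bullet}$ (cf. (\ref{RR+-})). The factor $R_{\eta^{(k)}\bullet}R_{+^{k-1}\bullet}$ equals $R_{\eta^{(k)}+^k\bullet}-R_{\eta^{(k)}-+^{k-1}\bullet}$: the first summand hits $R_{\tmmathbf{\eta}\bullet}$ exactly for $k=0,\dots,r$, producing $\sum_{k=0}^{r}z^k L_f^{\eta^{(k)}\bullet}$, while the second hits $R_{\tmmathbf{\eta}\bullet}$ only for $k=r+1$ (when $r<N$), producing $-z^{\,r+1}L_f^{\eta^{(r+1)}\bullet}$; finally the unit of $L_f$ against $z^{N+1}S_{N+1}$ supplies the extra $z^{N+1}$ precisely in the all-$+$ case $r=N$. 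Term by term this is the telescoped expression above, so the coefficient of $R_{\tmmathbf{\eta}\bullet}$ in $L_f\sigma_z$ equals $m_f^{\tmmathbf{\eta}+}=R_f^{\tmmathbf{\eta}\bullet}$ for all $\tmmathbf{\eta}$, which is (\ref{RL}).

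The main obstacle is purely the boundary bookkeeping. One must keep the empty sign sequence $()$, which still denotes $R_\bullet=S_1$ and carries the genuine weight $L_f^{()\bullet}=-m_f^{-}$, carefully distinguished from the degree-zero unit $R_\emptyset=1$; and one must check that the single negative ribbon produced by the product rule reproduces exactly the non-telescoping integral remainder, with the all-$+$ endpoint $r=N$ (where that negative term disappears and is replaced by $z^{N+1}$) handled consistently on both sides. Recasting the whole comparison as an induction on $N$, or equivalently on the trailing-run length $r$, makes the matching transparent and lets the two endpoint cases be treated uniformly.
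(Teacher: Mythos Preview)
Your proposal is correct and follows essentially the same route as the paper's own proof. Both arguments rest on the single analytic input $m_f^{\tmmathbf{\varepsilon}+}+m_f^{\tmmathbf{\varepsilon}-}=I(f)\,m_f^{\tmmathbf{\varepsilon}}$ (your $\sigma_+=\fc-\sigma_-$), organize the computation around the trailing run of $+$ signs, and close with the signed ribbon product rule $R_{\tmmathbf{a}\bullet}R_{\tmmathbf{b}\bullet}=R_{\tmmathbf{a}+\tmmathbf{b}\bullet}-R_{\tmmathbf{a}-\tmmathbf{b}\bullet}$; the only difference is cosmetic---the paper starts from $R_f$, splits over $\mathcal{E}_-$, and rearranges into $L_f\cdot\sigma_{I(f)}$, whereas you expand $L_f\cdot\sigma_{I(f)}$ and match coefficients against $R_f$.
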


\begin{proof} 
This result is a consequence of the simple equality
\begin{equation}\label{consist}
m_f^{\mathbf{\varepsilon}+}+ m_f^{\mathbf{\varepsilon}-}=
m_f^{\mathbf{\varepsilon}}I(f)\quad
\text{for all}\ \mathbf{\varepsilon} \in \mathcal{E}. 
\end{equation}
Let $\mathcal{E}_-$ be the subset of sequences of $\mathcal{E}$ that
do not end with a $+$ sign (including the empty sequence $\emptyset$). 
Thanks to Equation (\ref{consist}), for $\mathbf{\eta}\in \mathcal{E}_-$ and
$k\geq 0$, 
\begin{equation}
m_f^{\mathbf{\eta} +^{k+1}}
= m_f^{\mathbf{\eta}}\ I(f)^{k+1}
  - \sum_{i=0}^km_f^{\mathbf{\eta} +^{k-i}-}\ I(f)^{i}
\end{equation}
and
\begin{equation}
\begin{array}{rcl}
R_f &=& 1 + \displaystyle \sum_{\mathbf{\varepsilon}\in \mathcal{E}}
m_f^{\mathbf{\varepsilon}+} R_{\mathbf{\varepsilon}\bullet} \\
&=& 1 + \displaystyle \sum_{\mathbf{\eta}\in \mathcal{E}_-}\sum_{k\geq 0}
m_f^{\mathbf{\eta} +^{k+1}} R_{\mathbf{\eta} +^{k}\bullet} \\
&=& 1+ \displaystyle \sum_{\mathbf{\eta}\in \mathcal{E}_-}\sum_{k\geq
  0}m_f^{\mathbf{\eta}}\ I(f)^{k+1}R_{\mathbf{\eta} +^{k}\bullet} - \sum_{\mathbf{\eta}\in \mathcal{E}_-}\sum_{k\geq
  0}\sum_{i=0}^k m_f^{\mathbf{\eta} +^{k-i}-}I(f)^{i}R_{\mathbf{\eta}
  +^{k}\bullet} \\
&=& 1+ \displaystyle \sum_{\mathbf{\eta}\in \mathcal{E}_-}\sum_{k\geq
  0}m_f^{\mathbf{\eta}}\ I(f)^{k+1}R_{\mathbf{\eta} +^{k}\bullet}- \sum_{\mathbf{\eta}\in \mathcal{E}_-}\sum_{i\geq
  0}\sum_{j\geq 0} m_f^{\mathbf{\eta} +^{j}-}I(f)^{i}R_{\mathbf{\eta}
  +^{i+j}\bullet} \\
&=& 1+ \displaystyle \sum_{\mathbf{\eta}\in \mathcal{E}_-}\sum_{k\geq
  0}m_f^{\mathbf{\eta}}\ I(f)^{k+1}R_{\mathbf{\eta} +^{k}\bullet}-
\sum_{\mathbf{\varepsilon}\in \mathcal{E}}\sum_{i\geq 
  0} m_f^{\mathbf{\varepsilon} -}I(f)^{i}R_{\mathbf{\varepsilon}
  +^{i}\bullet}\,. \\
\end{array}
\end{equation}

Splitting the sum over $\eta$ into two parts, according to whether
$\eta=\emptyset$ or $\eta\in\mathcal {E}-\backslash\{\emptyset\}$, in which
case we write $\eta=\epsilon-$, we have
\begin{equation}
\begin{array}{rcl}
R_f&=& 1+ \displaystyle \sum_{k\geq
  0} I(f)^{k+1}R_{+^{k}\bullet}+\sum_{\mathbf{\varepsilon}\in
  \mathcal{E}}\sum_{k\geq 
  0}m_f^{\mathbf{\varepsilon}-}\ I(f)^{k+1}R_{\mathbf{\varepsilon}
  -+^{k}\bullet} \\
&& \quad \displaystyle - \sum_{\mathbf{\varepsilon}\in \mathcal{E}}\sum_{i\geq 
  0} m_f^{\mathbf{\varepsilon} -}I(f)^{i}R_{\mathbf{\varepsilon}
  +^{i}\bullet}\\
&=& 1+ \displaystyle \sum_{k\geq
  1} I(f)^{k}R_{+^{k-1}\bullet}  +\sum_{\mathbf{\varepsilon}\in
  \mathcal{E}} \sum_{k\geq 
  1} - m_f^{\mathbf{\varepsilon} -} I(f)^{k}(R_{\mathbf{\varepsilon}
  ++^{k-1}\bullet}-R_{\mathbf{\varepsilon}
  -+^{k-1}\bullet}) \\
&& \displaystyle + \sum_{\mathbf{\varepsilon}\in \mathcal{E}}
   - m_f^{\mathbf{\varepsilon} -}R_{\mathbf{\varepsilon}\bullet}\,.
\end{array}  
\end{equation}

But $S_k=R_k=R_{+^{k-1}\bullet}$ and 
\begin{equation}
R_{\mathbf{\varepsilon}
  ++^{k-1}\bullet}-R_{\mathbf{\varepsilon}
  -+^{k-1}\bullet}=R_{\mathbf{\varepsilon}\bullet}R_{+^{k-1}\bullet},
\end{equation}
so that 
\begin{equation}
R_f = 1 + \sum_{k\ge 1}I(f)^k S_k+ 
\sum_{\mathbf{\varepsilon}\in \mathcal{E}}
 - m_f^{\mathbf{\varepsilon}- }R_{\mathbf{\varepsilon}\bullet}
\left( 1+\sum_{k\ge 1}I(f)^kS_k\right),
\end{equation}
and finally
$R_f=L_f\cdot \sigma_{I(f)}$.
\end{proof}

Since $\sigma_z$ is grouplike, we can state:

\begin{lemma}
$R_f$ is grouplike if and only if $L_f$ is grouplike. 
\end{lemma}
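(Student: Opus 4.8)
The plan is to invoke the standard fact that the grouplike elements of a Hopf algebra form a group under multiplication, and to read Proposition~\ref{propL} as expressing each of $R_f$ and $L_f$ as the product of the other with a fixed grouplike series.

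First I would recall why grouplikes are closed under products and inverses, working in the graded completion of $\Sym$ where the formal series $R_f$, $L_f$ and $\sigma_z$ all live and where the coproduct extends degreewise as an algebra morphism. If $g$ and $h$ are grouplike, then
\[
\Delta(gh)=\Delta(g)\Delta(h)=(g\otimes g)(h\otimes h)=gh\otimes gh,
\]
so $gh$ is grouplike; and if $g$ is grouplike and invertible, applying $\Delta$ to $g\,g^{-1}=1$ and using multiplicativity gives $\Delta(g^{-1})=\Delta(g)^{-1}=g^{-1}\otimes g^{-1}$, so $g^{-1}$ is grouplike as well.

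Next I would apply this to $\sigma_{I(f)}$, which is grouplike by definition ($\Delta\sigma_z=\sigma_z\otimes\sigma_z$) and invertible with grouplike inverse $\sigma_{I(f)}^{-1}=\lambda_{-I(f)}$. Proposition~\ref{propL} gives $R_f=L_f\,\sigma_{I(f)}$, equivalently $L_f=R_f\,\sigma_{I(f)}^{-1}$. Thus if $L_f$ is grouplike then $R_f$ is a product of two grouplike series and is grouplike; conversely if $R_f$ is grouplike then so is $L_f=R_f\,\sigma_{I(f)}^{-1}$, again as a product of two grouplike series. This yields the equivalence. There is no genuine obstacle here: the only point requiring minor care is that $R_f$, $L_f$ and $\sigma_{I(f)}$ are formal series rather than finite-degree elements, so the argument must be phrased in the completed Hopf algebra where $\Delta$ is extended degreewise. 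Since each homogeneous component of the identities above involves only finitely many terms, the grouplike condition can be checked degree by degree and the formal manipulation is rigorous.
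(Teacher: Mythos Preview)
Your proof is correct and follows exactly the same approach as the paper: since $\sigma_{I(f)}$ is grouplike (and invertible), the relation $R_f=L_f\cdot\sigma_{I(f)}$ from Proposition~\ref{propL} makes the grouplikeness of $R_f$ and $L_f$ equivalent. The paper simply states this in one line, while you have spelled out the closure of grouplikes under products and inverses and the passage to the graded completion; both are harmless elaborations of the same argument.
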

\qed

The case of $D_f$ is less obvious. Given a function $f\in \mathcal{A}$, we
define a one-parameter family of functions by
\begin{equation}
 \quad f_t(x)=f(x-t) \quad (t\in\RR).
\end{equation}
When $f\in \mathcal{C}^{\infty}_0\subset \mathcal{A}$ is an infinitely
differentiable function with compact support, the weights
$m_{f_t}^{\mathbf{\varepsilon}}$ are differentiable with respect to $t$,
and
\begin{proposition}
If $f\in \mathcal{C}^{\infty}_0$,
\begin{equation}
\partial_t R_{f_t}=(YD_{f_t})R_{f_t}
\end{equation}
where $Y$ is the Euler operator on $\Sym$ (that is, $YR_I=|I|R_I$).
\end{proposition}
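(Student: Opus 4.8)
The plan is to reduce the identity to a single scalar recursion for the weights $m_{f_t}^{\tmmathbf{\varepsilon}}$ and then to match coefficients of the two sides in the signed ribbon basis. First I would move the $t$-dependence out of $f$ and into the step functions: substituting $y_i=x_i-t$ in (\ref{eq-defmf}), and writing $s_k=y_1+\cdots+y_k$, gives
\begin{equation}
m_{f_t}^{\tmmathbf{\varepsilon}} = \int_{\RR^n} f(y_1)\cdots f(y_n)\,
\sigma_{\varepsilon_1}(s_1 + t)\,\sigma_{\varepsilon_2}(s_2 + 2t)\cdots
\sigma_{\varepsilon_n}(s_n + nt)\, dy_1\cdots dy_n,
\end{equation}
since the $k$-th partial sum absorbs exactly $kt$. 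Because $f\in\mathcal{C}^\infty_0$ the weights are differentiable in $t$ (as already noted before the statement), and using $\partial_u\sigma_{\pm}(u)=\pm\,\delta(u)$ together with the chain rule, the product rule produces one Dirac term per factor:
\begin{equation}
\partial_t m_{f_t}^{\tmmathbf{\varepsilon}} = \sum_{j=1}^n j\,\varepsilon_j
\int_{\RR^n} f(y_1)\cdots f(y_n)\,\delta(s_j + jt)
\prod_{k\neq j}\sigma_{\varepsilon_k}(s_k + kt)\, dy_1\cdots dy_n.
\end{equation}

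Next I would exploit the constraint $s_j=-jt$ imposed by the Dirac term to factorize the $j$-th integral. On that hyperplane the later partial sums become $s_k+kt=(y_{j+1}+\cdots+y_k)+(k-j)t$ for $k>j$, so the integral over $y_{j+1},\dots,y_n$ is precisely $m_{f_t}^{(\varepsilon_{j+1},\dots,\varepsilon_n)}$, while the integral over $y_1,\dots,y_j$ reproduces the weight (\ref{deps}), namely $\varepsilon_j\, d_{f_t}^{(\varepsilon_1,\dots,\varepsilon_j)}$. The two factors $\varepsilon_j$ cancel and I obtain the key formula
\begin{equation}\label{keydt}
\partial_t m_{f_t}^{\tmmathbf{\varepsilon}} = \sum_{j=1}^n
j\, d_{f_t}^{(\varepsilon_1,\dots,\varepsilon_j)}\,
m_{f_t}^{(\varepsilon_{j+1},\dots,\varepsilon_n)},
\end{equation}
where the top term $j=n$ carries the empty factor $m_{f_t}^{\emptyset}=1$.

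It remains to translate (\ref{keydt}) into $\Sym$. The coefficient of $R_{\tmmathbf{\varepsilon}\bullet}$ in $\partial_t R_{f_t}$ is $\partial_t m_{f_t}^{\tmmathbf{\varepsilon}+}$, and I would apply (\ref{keydt}) to the length-$(|\tmmathbf{\varepsilon}|+1)$ sequence $\tmmathbf{\varepsilon}+$. On the algebraic side I expand $(YD_{f_t})R_{f_t}$ using that $Y$ multiplies $R_{\tmmathbf{\mu}\bullet}$ by its degree $|\tmmathbf{\mu}|+1$ and the signed product rule $R_{\mathbf{a}\bullet}R_{\mathbf{b}\bullet}=R_{\mathbf{a}+\mathbf{b}\bullet}-R_{\mathbf{a}-\mathbf{b}\bullet}$. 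Collecting the coefficient of a fixed $R_{\tmmathbf{\varepsilon}\bullet}$ amounts to choosing the cut $j$ at which the inserted sign $\varepsilon_j$ sits, so that $D_{f_t}$ contributes $d_{f_t}^{(\varepsilon_1,\dots,\varepsilon_{j-1})+}$ weighted by $j$, and $R_{f_t}$ contributes $m_{f_t}^{(\varepsilon_{j+1},\dots,\varepsilon_n)}$. The ``$+$'' and ``$-$'' branches of the product rule combine with the relation $d_{f_t}^{\tmmathbf{\eta}+}=-d_{f_t}^{\tmmathbf{\eta}-}$ so that both values $\varepsilon_j=\pm$ yield the single term $j\, d_{f_t}^{(\varepsilon_1,\dots,\varepsilon_j)}\,m_{f_t}^{(\varepsilon_{j+1},\dots,\varepsilon_n)}$; adding the pure term $j=n$ coming from $YD_{f_t}\cdot 1$ reproduces (\ref{keydt}) applied to $\tmmathbf{\varepsilon}+$ term by term, which is the claim. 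The lowest-degree case $\tmmathbf{\varepsilon}=\emptyset$ (where $d^{+}_{f_t}=f_t(0)$) is checked separately and follows the same pattern.

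The step I expect to be delicate is the distributional differentiation leading to the second displayed equation. The integrand is a product of characteristic functions whose jump loci are the hyperplanes $\{s_k+kt=0\}$; their normals $(1,\dots,1,0,\dots,0)$ are linearly independent, so the arrangement is in general position, and the naive product rule is legitimate: the second-order terms are supported on codimension-two sets and drop out, while each single Dirac term restricts to a hyperplane meeting the others transversally, so the remaining bounded step functions may be evaluated almost everywhere on it. If one prefers to avoid distributions, the rigorous route is to replace $\sigma_{\pm}$ by smooth approximations, differentiate under the integral, and pass to the limit; the smoothness and compact support of $f$ make all the integrals and limits harmless. Once this point is secured, the remainder is the sign bookkeeping through $d_{f_t}^{\tmmathbf{\eta}+}=-d_{f_t}^{\tmmathbf{\eta}-}$, which is the only other place needing care.
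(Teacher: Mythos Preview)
Your argument is correct and arrives at the same key scalar identity as the paper,
\[
\partial_t\, m_{f_t}^{\varepsilon_1,\dots,\varepsilon_n}
=\sum_{j=1}^n j\,d_{f_t}^{\varepsilon_1,\dots,\varepsilon_j}\,
m_{f_t}^{\varepsilon_{j+1},\dots,\varepsilon_n},
\]
but by a genuinely different route. The paper works with the recursive convolution brackets
$\bigl[\begin{smallmatrix}\varepsilon_1&\dots&\varepsilon_n\\ f_t&\dots&f_t\end{smallmatrix}\bigr](x)$,
proves a pointwise differential recursion for them by induction on the length (the key step being an integration by parts that peels off a boundary term $k\,d_{f_t}^{\varepsilon_1,\dots,\varepsilon_k}$ at each stage), and only then integrates to obtain the scalar identity. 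Your approach bypasses the whole convolution apparatus: the change of variables $y_i=x_i-t$ pushes all $t$-dependence into the arguments $s_k+kt$ of the step functions, so that the product rule for distributional derivatives gives the sum of Dirac contributions in one stroke, and the factorization on each hyperplane $s_j=-jt$ is immediate. This is shorter and conceptually cleaner; the price is that the analytical justification (differentiating a product of characteristic functions under the integral) has to be argued, whereas the paper's integration-by-parts route stays within classical manipulations of smooth integrands. Your discussion of transversality of the hyperplanes, or alternatively the smoothing argument, is the right way to secure that step. You also spell out the final bookkeeping in the signed ribbon basis more explicitly than the paper does; note a small indexing slip in your last paragraph (the ``pure'' term is $j=|\tmmathbf{\varepsilon}|+1$, and the $R_{f_t}$-factor should carry the trailing $+$, i.e.\ $m_{f_t}^{(\varepsilon_{j+1},\dots,\varepsilon_{|\tmmathbf{\varepsilon}|},+)}$), but the computation is correct once this is adjusted.
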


\begin{proof}
For given functions $f_1,\dots,f_n,\dots$ in $\mathcal{A}$, we define
by induction
\begin{equation}
\left [\begin{array}{c}
\emptyset \\
\emptyset
\end{array} \right ] (x)=\delta\,,
\end{equation}

\begin{equation}
\left [\begin{array}{ccc}
\varepsilon_1  &\dots&\varepsilon_n \\
f_1&\dots&f_n
\end{array} \right ] (x)=\left (\left [\begin{array}{ccc}
\varepsilon_1  &\dots&\varepsilon_{n-1} \\
f_1&\dots&f_{n-1}
\end{array} \right ]\ast f_n \right )(x)\sigma_{\varepsilon_n}(x),
\end{equation}
where $*$ is the convolution on $\RR$
\begin{equation}
(f*g)(x)=\int_\RR f(x-y)g(y)dy.
\end{equation}
Definition \ref{defmf} reads now
\begin{equation}
m_f^{\varepsilon_1,\dots, \varepsilon_n}
=\int_{\RR}
 \left [\begin{array}{ccc} \varepsilon_1  &\dots&\varepsilon_n \\
  f&\dots&f
  \end{array} \right ] (x) dx,
\end{equation}
and, since $f\in \mathcal{C}^{\infty}_0$ and $n\geq 1$,
\begin{equation}
\begin{array}{rcl}
d_f^{\varepsilon_1,\dots, \varepsilon_n}
&=& \varepsilon_n
  \left (\left [\begin{array}{ccc}
  \varepsilon_1  &\dots&\varepsilon_{n-1} \\
  f&\dots&f
  \end{array} \right ] * f \right )(0) \\[12pt]
&=& \displaystyle \varepsilon_n \int_{\RR} \left [\begin{array}{ccc}
  \varepsilon_1  &\dots&\varepsilon_{n-1} \\
  f_1&\dots&f_{n-1}
  \end{array} \right ](y)f(-y) dy \\[12pt]
&=& \displaystyle - \int_{\RR} \left [\begin{array}{ccc}
  \varepsilon_1  &\dots&\varepsilon_{n-1} \\
  f_1&\dots&f_{n-1}
  \end{array} \right ](y)
  \left (\varepsilon_n \int_0^{\varepsilon_n \infty}f'(x-y)dx \right ) dy
  \\[12pt]
&=& \displaystyle - \int_{\RR} \left [\begin{array}{ccc}
  \varepsilon_1  &\dots&\varepsilon_{n-1} \\
  f_1&\dots&f_{n-1}
  \end{array} \right ](y)
  \left (\int_{\RR} f'(x-y)\sigma_{\varepsilon_n}(x) dx \right) dy \\[12pt]
&=& \displaystyle -\int_{\RR} \left [\begin{array}{ccccc}
  \varepsilon_1  &\dots&\varepsilon_{n-1}&\varepsilon_n \\
  f&\dots&f&f'
  \end{array} \right ] (x) dx,
\end{array}
\end{equation}
where the last identity comes from the theorem of Fubini.

Using the recursive definition of $\left [\begin{array}{ccc}
\varepsilon_1  &\dots&\varepsilon_n \\
f_t&\dots&f_t
\end{array} \right ]$ and integration by parts, we get
\begin{equation}\label{diffrec}
\begin{split}
\partial_t 
\left[
\begin{array}{ccc}
\varepsilon_1  &\dots&\varepsilon_n \\
f_t&\dots&f_t
\end{array} 
\right] 
=\sum_{k=1}^{n-1} kd_{f_t}^{\varepsilon_1,\dots, \varepsilon_k}& \left [ \begin{array}{ccc}
\varepsilon_{k+1}  &\dots&\varepsilon_{n} \\
f_t&\dots&f_t
\end{array} \right ]\\ 
&+n \left [\begin{array}{ccccc}
\varepsilon_1  &\dots&\varepsilon_{n-1}&\varepsilon_n \\
f_t&\dots&f_t&   \partial_tf_t      
\end{array} \right ]\,.
\end{split}
\end{equation}
Indeed, this equation is obvious for $n=1$ and, recursively,
if \eqref{diffrec} holds for a given $n\geq 1$, let 
\begin{equation}
g(t,x)=\left[
\begin{array}{ccc}
\varepsilon_1  &\dots&\varepsilon_n \\
f_t&\dots&f_t
\end{array} 
\right],
\end{equation}
 then
\begin{eqnarray}
\partial_t 
\left[
\begin{array}{cccc}
\varepsilon_1  &\dots&\varepsilon_n &\varepsilon_{n+1}\\
f_t&\dots&f_t & f_t
\end{array} \right]
&=& \sigma_{\varepsilon_{n+1}}(x)
    \int_{\RR} \partial_t(g(t,y)\ast f_t(x-y)) dy\nonumber \\
&=& \sigma_{\varepsilon_{n+1}}(x)
    \int_{\RR} (\partial_t g(t,y)).f_t(x-y) dy \\
& &   +\,\sigma_{\varepsilon_{n+1}}(x)
    \int_{\RR} g(t,y) (\partial_t f_t(x-y)) dy\nonumber
\end{eqnarray}
but 
\begin{equation}
\sigma_{\varepsilon_{n+1}}(x) \int_{\RR} g(t,y) (\partial_t f_t(x-y)) dy
= \left[
\begin{array}{cccc}
\varepsilon_1  &\dots&\varepsilon_n &\varepsilon_{n+1}\\
f_t&\dots&f_t & \partial_t f_t
\end{array} 
\right],
\end{equation}
and if we use (\ref{diffrec}) to expand $\partial_t g(t,y)$, then
\begin{eqnarray}
\sigma_{\varepsilon_{n+1}}(x)
 \int_{\RR} (\partial_t g(t,y)).f_t(x-y) dy
&=& \sum_{k=1}^{n-1} kd_{f_t}^{\varepsilon_1,\dots, \varepsilon_k}
 \left [ \begin{array}{cccc}
  \varepsilon_{k+1}  &\dots &\varepsilon_{n} &\varepsilon_{n+1}\\
  f_t&\dots&f_t &f_t
 \end{array} \right ]\nonumber \\
& & + n \sigma_{\varepsilon_{n+1}}(x)
 \left [\begin{array}{ccccc}
   \varepsilon_1  &\dots & \varepsilon_{n-1}&\varepsilon_n \\
   f_t&\dots&f_t&   \partial_tf_t      
  \end{array} \right ]\ast f_t(x).
\end{eqnarray}
The last term reads 
\begin{equation}
n \sigma_{\varepsilon_{n+1}}(x)\left (\left (\left [\begin{array}{cccc}
\varepsilon_1  &\dots & \varepsilon_{n-1}&\\
f_t&\dots&f_t    
\end{array} \right ]
\ast \partial_t f_t \right)\sigma_{\varepsilon_{n}}\right )\ast f_t  (x)
\end{equation}
and if 
\begin{equation}
h(x)=\left [\begin{array}{cccc}
\varepsilon_1  &\dots & \varepsilon_{n-1}&\\
f_t&\dots&f_t    
\end{array} \right ](x), 
\end{equation}
then,
\begin{equation}
\begin{split}
((h*\partial_t f_t)\sigma_{\varepsilon_{n}})\ast f_t (x)
&=-\int_{\RR} \int_{\RR} h(z)f'(y-z-t)
  \sigma_{\varepsilon_{n}}(y)f(x-y-t) dz dy\nonumber  \\
&= -\int_{\RR} \varepsilon_n
  \int_0^{\varepsilon_n \infty}f'(y-z-t)f(x-y-t)dy \,h(z) dz\nonumber\\
&= \varepsilon_n
  \int_{\RR} \Big( f(-z-t)f(x-t) \\
& \qquad\qquad -
  \int_0^{\varepsilon_n \infty}f(y-z-t)f'(x-y-t)dy\Big) h(z) dz\nonumber\\
&= \varepsilon_n (h\ast f_t)(0) f_t(x)+((h\ast f_t)\sigma_{\varepsilon_n})\ast
\partial_t f_t(x).
\end{split}
\end{equation}
This yields the required equality at order $n+1$.

Integrating these equations (with $\partial_t f_t(x)=-f'(x-t)$), we obtain
\begin{equation}
\partial_t m_{f_t}^{\varepsilon_1 \dots \varepsilon_n}=\sum_{k=1}^n k\, d_{f_t}^{\varepsilon_1 \dots \varepsilon_k}m_{f_t}^{\varepsilon_{k+1}  \dots\varepsilon_{n}}
\end{equation}
and finally
\begin{equation}\label{eqdiffR}
\partial_t R_{f_t}=(YD_{f_t})R_{f_t}\,.
\end{equation}
\end{proof} 

This identity implies the primitivity of $D_f$. Indeed, if $R_f$ is grouplike
for all $f$, in particular, for a given $f$, all the $R_{f_t}$ are also
grouplike, and we have:

\begin{lemma}
\label{lemD}
If $R_{f_t}$ is grouplike for all $t$,
then $D_{f_t}$ is primitive for all $t$.
\end{lemma}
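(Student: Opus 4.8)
The plan is to differentiate the grouplike relation $\Delta R_{f_t}=R_{f_t}\otimes R_{f_t}$ with respect to $t$ and feed in the differential equation \eqref{eqdiffR} of the preceding Proposition. Since $\Delta$ is an algebra morphism that commutes with $\partial_t$, applying it to $\partial_t R_{f_t}=(YD_{f_t})R_{f_t}$ gives, on the one hand,
\[
\partial_t\Delta R_{f_t}=\Delta\big((YD_{f_t})R_{f_t}\big)=\Delta(YD_{f_t})\,(R_{f_t}\otimes R_{f_t}),
\]
while differentiating the grouplike relation directly gives
\[
\partial_t(R_{f_t}\otimes R_{f_t})=(\partial_t R_{f_t})\otimes R_{f_t}+R_{f_t}\otimes(\partial_t R_{f_t})=\big(YD_{f_t}\otimes 1+1\otimes YD_{f_t}\big)(R_{f_t}\otimes R_{f_t}).
\]
Because $\Delta R_{f_t}=R_{f_t}\otimes R_{f_t}$ these two computations produce the same left-hand side, so I would equate the right-hand sides. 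As $R_{f_t}=1+\cdots$ has constant term $1$, it is invertible, hence so is $R_{f_t}\otimes R_{f_t}$; right-cancelling it yields $\Delta(YD_{f_t})=YD_{f_t}\otimes 1+1\otimes YD_{f_t}$, that is, $YD_{f_t}$ is primitive.

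It then remains to pass from the primitivity of $YD_{f_t}$ to that of $D_{f_t}$. Here I would use that the Euler operator is a coderivation, $\Delta Y=(Y\otimes 1+1\otimes Y)\Delta$ (immediate, since $Y$ acts by the degree and $\Delta$ preserves total degree), together with the fact that $D_{f_t}$ has no component in degree $0$ (as $D^\emptyset=0$), so that $Y$ is invertible where it matters. Concretely, set $P=\Delta D_{f_t}-D_{f_t}\otimes 1-1\otimes D_{f_t}$. Using $Y1=0$ and the primitivity just established, one computes $(Y\otimes 1+1\otimes Y)P=\Delta(YD_{f_t})-YD_{f_t}\otimes 1-1\otimes YD_{f_t}=0$. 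Since $P$ is concentrated in total degrees $\ge 1$, where $Y\otimes 1+1\otimes Y$ acts as multiplication by the nonzero total degree, this forces $P=0$, i.e. $D_{f_t}$ is primitive. Equivalently, comparing bihomogeneous components of total degree $n$ in $\Delta(YD_{f_t})=YD_{f_t}\otimes 1+1\otimes YD_{f_t}$ and dividing by $n$ gives $\Delta (D_{f_t})_n=(D_{f_t})_n\otimes 1+1\otimes (D_{f_t})_n$ for every $n$.

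The conceptual content is entirely in the first step; the points requiring care are topological rather than algebraic. Everything takes place in the graded completion of $\Sym\otimes\Sym$, so I would need $\Delta$ to be continuous and to commute with $\partial_t$ on the families at hand — both clear degree by degree, since in each fixed degree the weights $m_{f_t}^{\tmmathbf{\varepsilon}}$ are genuine differentiable functions of $t$ for $f\in\mathcal{C}^{\infty}_0$, and $\Delta$ acts through finite sums on each homogeneous component. The invertibility used in the cancellation is harmless because $R_{f_t}$ has constant term $1$. I expect the only real subtlety to be the justification of term-by-term differentiation of the series $R_{f_t}$ through $\Delta$, which is precisely the regularity hypothesis $f\in\mathcal{C}^{\infty}_0$ built into the preceding Proposition; granting that, the argument is purely formal.
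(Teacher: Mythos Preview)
Your proposal is correct and follows essentially the same route as the paper: differentiate the grouplike relation through $\Delta$ using \eqref{eqdiffR}, cancel the invertible factor $R_{f_t}\otimes R_{f_t}$ to obtain that $YD_{f_t}$ is primitive, and then pass to $D_{f_t}$ by homogeneity. The only point you leave implicit that the paper addresses (albeit tersely) is the extension from $f\in\mathcal{C}^\infty_0$ to general $f\in\mathcal{A}$, which goes by density of $\mathcal{C}^\infty_0$ in $L^1(\RR)$ applied to the identity $\Delta(YD_{f_t})=YD_{f_t}\otimes 1+1\otimes YD_{f_t}$ degree by degree.
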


\Proof Let us set for short $R(t)=R_{f_t}$ and $D(t)=D_{f_t}$ and first assume
that $f\in \mathcal{C}^{\infty}_0\subset \mathcal{A}$ is an infinitely
differentiable function with compact support.
Since the coproduct of $\Sym$ has the form $\Delta F=F(A+B)$,
Equation \eqref{eqdiffR} holds for the coproducts, and we have
\begin{equation}
\label{eqYD}
\Delta (YD(t)) = (\Delta R(t))'\Delta(R(t)^{-1})\,.
\end{equation}
Assuming that $R(t)$ is grouplike, we have 
\begin{equation}
(\Delta R(t))'=R'(t)\otimes R(t) + R(t)\otimes R'(t).
\end{equation}
Substituting this expression into \eqref{eqYD}, we obtain
\begin{equation}\label{Ycop}
\Delta (YD(t)) = YD(t)\otimes 1 +1\otimes YD(t)\,.
\end{equation}
Thus $YD(t)$ is primitive, and so are its homogeneous components,
so that $D(t)$ is primitive as well. In the general case $f\in \mathcal{A}$, since $\mathcal{C}^{\infty}_0$ is dense in $L^1(\RR)$
(\ref{Ycop}) still holds for $f\in \mathcal{A}$
\qed

Let us finally remark that we also have integral expressions of $R_f$
and $D_f$ in the basis $\Lambda^I$ (defined in Section \ref{bases}).
Let us denote, for any element $F$ in $\Sym$, its coefficients in a basis $B$
by  $\<F,B\>_I$, so that in particular
\begin{equation}
F=\sum \<F,\Lambda\>_I \Lambda^I\,.
\end{equation}

\begin{proposition}
For $I=(i_1,..,i_r)\vDash n$,
\begin{equation}
\label{mouldR}
\begin{array}{rcl}
\<R_f,\Lambda\>_I &=& \displaystyle (-1)^{r+n}\int_{\RR} \left
    [ \begin{array}{ccc}
+ &\dots &+ \\
f^{*i_1} & \dots & f^{*i_r}
\end{array} \right ](x) dx \\
&=&\displaystyle (-1)^{r+n}\int_{K_I} f(x_1)\dots f(x_n)
dx_1\dots dx_n,
\end{array}
\end{equation}
where $f^{*i}$ is the $i$-th convolution power (on $\RR$) of $f$ and $K_I$ is
the polyhedral cone (see Section \ref{cones}) in $\RR^n$ defined by the
inequalities
\begin{equation}
\quad \sum_{k=1}^{i_1+\dots+i_q} x_k \geq 0 \ \text{for $1\leq q\leq r$}.
\end{equation}
Similarly,
\begin{equation}
\label{mouldD}
\<D_f,\Lambda\>_I =  (-1)^{r+n} \left (\left
    [ \begin{array}{ccc}
+ &\dots &+ \\
f^{*i_1} & \dots & f^{*i_{r-1}}
\end{array} \right ]* f^{*i_{r}} \right )(0) .
\end{equation}
\end{proposition}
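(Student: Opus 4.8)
The plan is to split the statement into a linear-algebra part (the change of basis between the signed ribbon basis and the basis $\Lambda^I$) and an analytic part (rewriting the resulting sums of weights as integrals over $K_I$), and then to treat $D_f$ by the very same linear algebra together with the Dirac analogue of the analytic step. Throughout set $n=|I|$ and $r=l(I)$. First I would make the change of basis explicit. For $Z\subseteq\{1,\dots,n-1\}$ let $\varepsilon(Z)\in\mathcal E_{n-1}$ be the sign word whose set of $+$'s is $Z$, and let $I(Z)\vDash n$ be the composition with $\Des(I(Z))=Z$. Starting from $\Lambda_i=(-1)^{i-1}R_{-^{i-1}\bullet}$ and applying the product rule $R_{\mathbf a\bullet}R_{\mathbf b\bullet}=R_{\mathbf a+\mathbf b\bullet}-R_{\mathbf a-\mathbf b\bullet}$ at each of the $r-1$ junctions of $\Lambda^I=\Lambda_{i_1}\cdots\Lambda_{i_r}$, and reindexing the junction signs by the subset $Z\subseteq\Des(I)$ of junctions carrying a $+$, I obtain
\[
\Lambda^I=(-1)^{n-1}\sum_{Z\subseteq\Des(I)}(-1)^{|Z|}R_{\varepsilon(Z)\bullet}.
\]
This is a relation indexed by the Boolean lattice of subsets of $\{1,\dots,n-1\}$, so Möbius inversion yields the inverse change of basis
\[
R_{\varepsilon(W)\bullet}=(-1)^{n}\sum_{Z\subseteq W}(-1)^{l(I(Z))}\Lambda^{I(Z)}.
\]
(As a sanity check, $W=\emptyset$ gives $R_{-^{n-1}\bullet}=(-1)^{n-1}\Lambda_n$, while $W=\{1,\dots,n-1\}$ gives $S_n=(-1)^n\sum_{I\vDash n}(-1)^{l(I)}\Lambda^I$, i.e.\ the expansion of $\sigma_t=\lambda_{-t}^{-1}$.)

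Substituting the second identity into $R_f=1+\sum_{\varepsilon}m_f^{\varepsilon+}R_{\varepsilon\bullet}$ and exchanging the summations, I read off the coefficient of $\Lambda^{I}$ (with $\Des(I)=Z$): it equals $(-1)^{r+n}\sum_{W\supseteq Z}m_f^{\varepsilon(W)+}$, the sum ranging over all words $\varepsilon'$ whose $+$-set contains $\Des(I)$, equivalently that carry a $+$ at each partial-sum position $i_1+\cdots+i_q$ for $1\le q\le r-1$. Thus $\langle R_f,\Lambda\rangle_I=(-1)^{r+n}\sum_{\varepsilon'}m_f^{\varepsilon'+}$ over this family, and the announced global sign traces back exactly to the factors $(-1)^{i_k-1}$ in $\Lambda_i=(-1)^{i-1}R_{-^{i-1}\bullet}$.

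It remains to recognize this sum of weights as the integrals in \eqref{mouldR}. For the cone form I would use $\sigma_++\sigma_-=\mathbf 1_{\RR}$ almost everywhere: in $m_f^{\varepsilon'+}=\int_{\RR^n}f(x_1)\cdots f(x_n)\prod_{k=1}^{n}\sigma_{\varepsilon'_k}(x_1+\cdots+x_k)\,dx$ (with $\varepsilon'_n=+$), summing over the signs at the free positions collapses those factors to $1$, leaving precisely $\prod_{q=1}^{r}\sigma_+(x_1+\cdots+x_{i_1+\cdots+i_q})$, whose support is $K_I$. The bracket form is the same integral read the other way: unrolling the recursive definition of $\left[\begin{smallmatrix}+&\cdots&+\\ f^{*i_1}&\cdots&f^{*i_r}\end{smallmatrix}\right]$ and expanding each convolution power $f^{*i_q}$ into its $i_q$ underlying variables turns $\int_{\RR}[\,\cdots\,](x)\,dx$ into an integral over the same region, the arguments of the successive $\sigma_+$ being exactly the partial sums $x_1+\cdots+x_{i_1+\cdots+i_q}$.

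For $D_f$ the linear algebra is identical, since $D_f=\sum_\varepsilon d_f^{\varepsilon+}R_{\varepsilon\bullet}$ is written in the same basis; the same inversion gives $\langle D_f,\Lambda\rangle_I=(-1)^{r+n}\sum_{\varepsilon'}d_f^{\varepsilon'+}$ over the same family. The analytic step is now the Dirac analogue: the marginalization $\sigma_++\sigma_-=\mathbf 1_{\RR}$ again removes the free factors, and unrolling the bracket with the trailing $\delta(x_1+\cdots+x_n)$ reorganizes the constrained integral as the convolution-at-$0$ expression \eqref{mouldD}, the evaluation at $0$ encoding $x_1+\cdots+x_n=0$. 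The main obstacle I anticipate is bookkeeping rather than conceptual: keeping every sign correct in the two changes of basis — in particular not dropping the factor $(-1)^{i-1}$ above, which is the source of the sign $(-1)^{r+n}$ — and justifying the distributional manipulations in the $D_f$ case (collapsing $\sigma_++\sigma_-$ under the integral sign, Fubini, and the regularity of $f$ needed to evaluate the convolution at $0$), which is precisely why the continuity of $f$ at $0$ and the density of $\mathcal{C}^{\infty}_0$ were invoked above.
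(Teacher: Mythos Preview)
Your proof is correct and follows essentially the same route as the paper. Both arguments rest on the same two ingredients: the change of basis between the signed ribbons $R_{\varepsilon\bullet}$ and the $\Lambda^I$, and the collapse $\sigma_++\sigma_-=\mathbf 1_{\RR}$ at the unconstrained positions. The only differences are cosmetic: the paper starts from the integral side, rewrites it as $(-1)^{\ell(I)+n}\sum_{J\le\bar I^\sim}(-1)^{\ell(J)-1}\langle R_f,R\rangle_J$, and then quotes the identity $R_I=\sum_{J\le\bar I^\sim}(-1)^{\ell(J)-\ell(\bar I^\sim)}\Lambda^J$ from \cite{NCSF1}, whereas you derive that change of basis from scratch via the product rule and M\"obius inversion and then read the coefficient the other way around. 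Your version is more self-contained; the paper's is shorter by outsourcing the linear algebra.
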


\Proof
The right-hand side of Equation \eqref{mouldR} can be rewritten as
\begin{equation}
\begin{split}
(-1)^{r+n}
\sum_{\gf{\varepsilon_{i_1}=+,}{\varepsilon_{i_1+i_2}=+,\dots,\varepsilon_n=+}}
& \int_{\RR}\left[\begin{array}{ccc} \varepsilon_1  &\dots&\varepsilon_n \\
f&\dots&f
\end{array} \right] (x) dx\\
&= (-1)^{\ell(I)+n}\sum_{J\le \bar I^\sim}(-1)^{\ell(J)-1}\<R_f,R\>_J\,.
\end{split} 
\end{equation}
Thus, the result follows from the relation \cite[Eq. (63)]{NCSF1}
\begin{equation}
R_I=\sum_{J\le \bar I^\sim}(-1)^{\ell(J)-\ell(\bar I^\sim)}\Lambda^J,
\end{equation}
since $\ell(\bar I^\sim)=n+1-\ell(I)$.

Similarly, the right-hand side of \eqref{mouldD} is
\begin{equation}
 (-1)^{\ell(I)+n}\sum_{J\le \bar I^\sim}(-1)^{\ell(J)-1}\<D_f,R\>_J\,.
\end{equation}
\qed

It remains to prove that $R_f$ is grouplike. This will be done in the
following sections and the different proofs give rise to some
remarkable interpretations and developments based on these ``iterated
integrals''. We will also emphasize the case of the Catalan average, that
leads to an explicit Lie idempotent.
To go ahead, we first need to introduce a few more algebraic structures.

%%%%%%%%%%%%%%%%%%%%%%%%%%%%%%%%%%%%%%%%%%%%%%%%%%%%%%%%%%%%%%%%%%%%%%%%%%%%%%%
%%%%%%%%%%%%%%%%%%%%%%%%%%%%%%%%%%%%%%%%%%%%%%%%%%%%%%%%%%%%%%%%%%%%%%%%%%%%%%%
%%%%%%%%%%%%%%%%%%%%%%%%%%%%%%%%%%%%%%%%%%%%%%%%%%%%%%%%%%%%%%%%%%%%%%%%%%%%%%%
\section{More combinatorial Hopf algebras}
\label{moreCHA}

%%%%%%%%%%%%%%%%%%%%%%%%%%%%%%%%%%%%%%%%%%%%%%%%%%%%%%%%%%%%%%%%%%%%%%%%%%%%%%%
\subsection{Quasi-symmetric functions}

As already mentioned, noncommutative symmetric functions and quasi-symmetric
functions can help to understand certain properties of ordinary symmetric
functions. In the same way, certain features of $\Sym$ or $QSym$ can be properly
understood by introducing bigger Hopf algebras, of which they are subalgebras or quotients.

The shortest way to introduce $QSym$ and its duality with $\Sym$ is via the noncommutative
Cauchy identity. Let $X=\{x_1<x_2<\dots\}$ be an infinite totally ordered set of
mutually commuting variables, also commuting with the $a_i$ of $\Sym$. The Cauchy
kernel is the formal series
\begin{equation}
K(X,A) =\prod_{i\ge 1}^\rightarrow \prod_{j\ge 1}^\rightarrow (1-x_ia_j)^{-1},
\end{equation}   
where the arrow means that the products are taken in increasing order from
left to right.
It is easy to expand the product on the basis $S^I(A)$ of $\Sym$:
\begin{equation}
K(X,A) =\prod_{i\ge 1}^\rightarrow \sigma_{x_i}(A)
=\prod_{i\ge 1}^\rightarrow \sum_{j_i\ge 0}x_i^{j_i}S^{j_i}(A)
=\sum_I M_J(X)S^J(A),
\end{equation}
where
\begin{equation}
M_J(X)=\sum_{i_1<i_2<\dots<i_r}x_{i_1}^{j_1}x_{i_2}^{j_2}\dots x_{i_r}^{j_r}\,.
\end{equation}
The polynomials $M_J(X)$ are called monomial quasi-symmetric functions (or quasi-monomial functions).
They span a subalgebra of $\K[X]$, which is precisely $QSym$ \cite{Ge}. 
The bilinear map from $QSym\times\Sym$ to
$\K$ defined by
\begin{equation}
\<M_I,\,S^J\> = \delta_{IJ}
\end{equation}
realizes $QSym$ as the (graded) dual of $\Sym$, and any pair of bases such that
\begin{equation}
K(X,A)=\sum_I U_I(X) V_J(A)
\end{equation}
are dual to each other. The dual of the ribbon basis $R_I$ is the fundamental basis $F_I$,
which has the explicit expression
\begin{equation}\label{defFI}
F_J(X)=\sum_{\substack{i_1\le i_2\le \dots\le i_n \\
                       i_k<i_{k+1} \text{if $k\in\Des(J)$}}}
x_{i_1}x_{i_2}\dots x_{i_n}=\sum_{I\ge J}M_I
\,.
\end{equation}

%%%%%%%%%%%%%%%%%%%%%%%%%%%%%%%%%%%%%%%%%%%%%%%%%%%%%%%%%%%%%%%%%%%%%%%%%%%%%%%
\subsection{Moulds and their fundamental symmetries}\label{secmould}

The series $R_f$ is grouplike if and only if the coefficients $\<R_f,S\>_I$
define a character, that is, a linear form $\chi$ on $QSym$ satisfying
$\chi(xy)=\chi(x)\chi(y)$, with $\<R_f,S\>_I=\chi(M_I)$.
Similarly,
the series $D_f$ is primitive if and only if the coefficients $\<D_f,S\>_I $
define an infinitesimal character, that is, a linear form $\psi$ on $QSym$
satisfying $\psi(xy)=\psi(x)\varepsilon(y)+\varepsilon(x)\psi(y)$,
with $\<D_f,S\>_I=\psi(M_I)$, where $\varepsilon$ is the counit.

The product rule for the monomial basis is easily derived directly
by duality. It is given by the quasi-shuffle of compositions, regarded as
words over the alphabet of positive integers:
\begin{equation}
M_I M_J = \sum_{K}(K|I\saug J)M_K
\end{equation}
where $(K|I\saug J)$ means the coefficient of the word $K$ in the
quasi-shuffle of the words $I$ and $J$. The quasi-shuffle makes sense for
words over an arbitrary additive semigroup $\Sigma$. It is recursively defined
by
\begin{equation}
\label{eq-shuf-aug}
au\saug bv = a (u\saug bv) + b(au\saug v) +(a+b)(u\saug v)
\end{equation}
where $u,v$ are arbitrary words over $\Sigma$, and $a,b\in\Sigma$,
and the condition that the empty word is neutral.

For example,
\begin{equation}
\begin{split}
13\saug 32 &= 1332+1332+1323+3132+3123+3213\\
&\ \ +162+432+423+135+315+333+45.
\end{split}
\end{equation}

The product formula for the basis $M_I$ allows to identify $QSym$ with
the quasishuffle Hopf algebra over the additive semigroup of positive integers
$\KK\<\NN^*\>$ (see \cite{Hof}). 
We identify $M_I$, $I=(i_1, \dots,  i_r)$ with the
basis $i_1  \dots  i_r$ of $\KK\<\NN^*\>$ (words over the alphabet of positive
integers), equipped with the quasishuffle product and the deconcatenation
coproduct.

Families of coefficients such as $\<R_f,S\>_I$ or $\<D_f,S\>_I$,
which define linear maps from $\KK\<\NN^*\>$ to the base field $\KK$, appear
in Ecalle's work and are called {\it moulds}.
A mould is said to be {\it symmetrel} (resp. {\it alternel}) if and only if it
defines a character (resp. an infinitesimal character) of the quasishuffle
Hopf algebra $\KK\<\NN^*\>$.

Equivalently, moulds can be interpreted as nonlinear operators on $\Sym$.
By definition, $\Sym$ is a graded free associative algebra, with exactly
one generator for each degree. 
Several sequences of generators are of common use, some of which being
composed of primitive elements (such as $\Psi_n$ or $\Phi_n$), whilst other
are sequences of divided powers (such as $S_n$ or $\Lambda_n$), so that their
generating series is grouplike. Each pair of such sequences $(U_n)$, $(V_n)$
defines two moulds, whose coefficients express the expansions of the $V_n$ on
the $U^I$, and vice-versa. These moulds can be interpreted as the
automorphisms sending $U_n$ to $V_n$ or conversely.

Ecalle's four fundamental symmetries reflect the four possible combinations of
the primitive or grouplike characteristics.

If we denote by $\LL$ the (completed) primitive Lie algebra of $\Sym$ and by
$\GG=\exp\,\LL$ the associated  multiplicative group, we have the following
table
$$
\begin{tabular}{|c|c|}
\hline
$\LL \rightarrow \LL$ & \text{Alternal}\\
\hline
$\LL \rightarrow \GG$ & \text{Symmetral}\\
\hline
$\GG \rightarrow \LL$ & \text{Alternel}\\
\hline
$\GG \rightarrow \GG$ & \text{Symmetrel}\\
\hline
\end{tabular} 
$$

%%%%%%%%%%%%%%%%%%%%%%%%%%%%%%%%%%%%%%%%%%%%%%%%%%%%%%%%%%%%%%%%%%%%%%%%%%%%%%%
\subsection{Noncommutative quasi-symmetric functions: $\FQSym$ and $\WQSym$}

\subsubsection{Free quasi-symmetric functions}
The multiplicative structure of $QSym$ in the bases $F_I$ and $M_I$ can be
understood by lifting these to two different combinatorial Hopf algebras,
which could both equally deserve the name ``noncommutative quasi-symmetric
functions''. For this reason, the first one is called ``Free Quasi-Symmetric
functions'' ($\FQSym$) and the second one ``Word Quasi-Symmetric functions''
($\WQSym$).

To understand the origin of the first one, recall that the noncommutative
ribbon Schur function $R_I$ has two interpretations:

(i) as the sum of words of shape $I$ in the free associative algebra, and

(ii) as the sum of permutations of shape $I$ in the group algebra of
the symmetric group.

So, one may ask whether it is possible to associate with each word of shape
$I$ a permutation of shape $I$, so as to reconcile both approaches, and
interpret each permutation as a sum of words.

This is indeed possible, and the solution is given by the classical
{\em standardization process}, familiar in combinatorics and in computer
science.

The \emph{standardized word} $\Std(w)$ of a word $w\in A^*$ is the permutation
obtained by iteratively scanning $w$ from left to right, and labelling
$1,2,\dots$ the occurrences of its smallest letter, then numbering the
occurrences of the next one, and so on. Alternatively, $\sigma=\Std(w)^{-1}$
can be characterized as the unique permutation of minimal length such that
$w\sigma$ is a nondecreasing word. For example, $\Std(bbacab)=341625$.

Obviously, $\std(w)$ has the same descents as $w$. We can now define
polynomials
\begin{equation}
\G_{\sigma}(A) := \sum_{\std(w) = \sigma} w\,.
\end{equation}
It is not hard to check that the linear span of these polynomials is a
subalgebra of $\KK\<A\>$, denoted by $\FQSym(A)$, an acronym for {\em Free
Quasi-Symmetric functions} \cite{NCSF6}.

Since the definition of the $ \G_{\sigma}(A)$ involves only a totally ordered
alphabet $A$, we can apply it to an ordinal sum $A+B$, and as in the case of
$\Sym$, this defines a coproduct if we assume that $A$ commutes with $B$.
Clearly, this coproduct is coassociative and multiplicative, so that we
have a graded (and connected) bialgebra, hence again a Hopf algebra.
It is isomorphic (as a Hopf algebra) to the convolution algebra of permutations 
of Malvenuto-Reutenauer \cite{MR} (see \cite{NCSF6}).

By definition,
\begin{equation}
R_I(A)=\sum_{C(\sigma)=I} \G_{\sigma}(A) 
\end{equation}
so that $\Sym$ is embedded in $\FQSym$ as a Hopf subalgebra.

It is also easy to check that $\FQSym$ is self-dual. If we set
$\F_\sigma=\G_{\sigma^{-1}}$ and
$\<\F_\sigma\,,\,\G_\tau\>=\delta_{\sigma,\tau}$,
then $\<FG,H\>=\<F\otimes G,\Delta H\>$.

Since the graded dual of $\Sym$ is the commutative algebra $QSym$, 
we have a surjective homomorphism $\FQSym^*\twoh QSym$. Its description
is particularly simple: it consists in replacing our noncommuting variables
$a_i$ by commuting ones $x_i$. Then, $\F_\sigma(X)$ depends only on the
descent composition $I=C(\sigma)$, and is equal to the quasi-symmetric
function $F_I(X)$.

Hence, the multiplication rule for the $\F_\sigma$ describes in particular
that of the $F_I$. To state it, we need the following notation.

For a word $w$ on the alphabet $\{1,2,\dots\}$, we denote by $w[k]$ the word
obtained by replacing each letter $i$ by the integer $i+k$.
If $u$ and $v$ are two words, with $u$ of length $k$, one defines
the \emph{shifted concatenation}
$u\sconc v = u\cdot (v[k])$
and the \emph{shifted shuffle}
$ u\ssh v= u\shuffle (v[k])$,
where $\shuffle$ is the usual shuffle product,
defined for words  over an arbitrary
alphabet $A$ by 
\begin{equation}
au\shuffle bv = a(u\shuffle bv)+ b(au\shuffle v),
\end{equation}
where $u,v$ are arbitrary words over $\Sigma$, and $a,b\in\Sigma$,
%if $u=au'$, $v=bv'$, $a,b\in A$ and $u',v'\in A^*$,
and the condition that the empty word is neutral
(compare\eqref{eq-shuf-aug}).

Then, the product rule is
\begin{equation}\label{prodF}
\F_\alpha \F_\beta = \sum_{\gamma\in\alpha\ssh\beta}\F_\gamma\,.
\end{equation}

%%%%%%%%%%%%%%%%%%%%%%%%%%%%%%%%%%%%%%%%%%%%%%
\subsubsection{Word quasi-symmetric functions}

Although it is possible to lift the monomial basis to $\FQSym$, the resulting
polynomials are not positive sums of monomials. To lift the product formula to
a multiplicity-free product of nonnegative polynomials, one has to introduce
the larger algebra $\WQSym$ \cite{Hiv} (which contains $\FQSym$, see, {\it
e.g.}, \cite{NTtri}).
Its definition is similar to that of $\FQSym$. The only difference is that
standardization is replaced by a finer invariant, the \emph{packed word}.

The \emph{packed word} $u=\pack(w)$ associated with a word $w\in A^*$ is
obtained by the following process. If $b_1<b_2<\dots <b_r$ are the letters
occuring in $w$, $u$ is the image of $w$ by the homomorphism $b_i\mapsto a_i$.
For example, $\pack(64661812)= 43441512$.
A word $u$ is said to be \emph{packed} if $\pack(u)=u$. We denote by $\PW$ the
set of packed words.
With such a word, we associate the polynomial
\begin{equation}
\M_u :=\sum_{\pack(w)=u}w\,.
\end{equation}
Under the abelianization
$\chi:\ \K\langle A\rangle\rightarrow\K[X]$, the $\M_u$ are mapped to the monomial
quasi-symmetric functions $M_I$,
$I=\ev(u)=(|u|_a)_{a\in A}$ being the evaluation vector of $u$, that is,
the sequence whose $i$-th term is the number of times the letter $a_i$ occurs in $w$.

These polynomials span a subalgebra 
of $\K\langle A\rangle$, called $\WQSym$ for Word
Quasi-Symmetric functions~\cite{Hiv}. It is a Hopf algebra
for the usual coproduct $A\mapsto A+B$.

\begin{proposition}
The product on $\WQSym$ is given by
\begin{equation} 
\label{prodG-wq}
\M_{u'} \M_{u''} = \sum_{u \in u'\convW u''} \M_u\,,
\end{equation}
where the \emph{convolution} $u'\convW u''$ of two packed words
is defined as
\begin{equation} 
u'\convW u'' = \sum_{v,w ;
u=v\cdot w\,\in\,\PW, \pack(v)=u', \pack(w)=u''} u\,.
\end{equation}
\end{proposition}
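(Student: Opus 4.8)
The plan is to expand both sides as multiplicity-free formal sums of words in $\K\langle A\rangle$ and to check that they have the same support. Since the product of $\K\langle A\rangle$ is concatenation and $\M_u=\sum_{\pack(w)=u}w$, the left-hand side is
\begin{equation}
\M_{u'}\M_{u''}=\sum_{\pack(a)=u',\ \pack(b)=u''} a\cdot b,
\end{equation}
where the sum runs over all pairs of words $(a,b)$ with $\pack(a)=u'$ and $\pack(b)=u''$, and each resulting word $a\cdot b$ occurs exactly once since the splitting point is forced by $|a|=|u'|$.

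The key step will be a compatibility lemma between packing and the taking of factors: for any word $c=c_1\cdots c_n$ and any $1\le k\le n$, the packing of the prefix $c_1\cdots c_k$ equals the packing of the length-$k$ prefix of $\pack(c)$, and likewise for suffixes. I would prove this from the characterization of $\pack$ by the comparison pattern of a word, namely that two words $x$ and $y$ of the same length satisfy $\pack(x)=\pack(y)$ if and only if, for all positions $i,j$, one has $x_i<x_j \iff y_i<y_j$ and $x_i=x_j\iff y_i=y_j$. Since $\pack(c)$ has, by construction, the same comparison pattern as $c$, restricting this equivalence to the first $k$ positions (resp. the last $n-k$) shows that the prefix (resp. suffix) of $c$ and the corresponding factor of $\pack(c)$ share their comparison pattern, hence their packing.

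With this lemma in hand the matching is immediate. I would take a word $c$ of length $|u'|+|u''|$ and write $c=a\cdot b$ with $|a|=|u'|$; setting $u=\pack(c)=v\cdot w$ with $|v|=|u'|$, the lemma gives $\pack(a)=\pack(v)$ and $\pack(b)=\pack(w)$. Hence $c$ occurs in the left-hand side (that is, $\pack(a)=u'$ and $\pack(b)=u''$) if and only if $\pack(v)=u'$ and $\pack(w)=u''$, which by definition means exactly $u=\pack(c)\in u'\convW u''$; conversely every word in $\sum_u\M_u$ arises as such a $c$. Both the convolution $u'\convW u''$ (a sum of distinct packed words, since for a fixed $u$ the factorization $u=v\cdot w$ with $|v|=|u'|$ is unique) and the expansion $\sum_u\M_u$ (the supports of distinct $\M_u$ being disjoint) are multiplicity-free, so equality of supports yields equality of the two expressions.

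The main obstacle is the compatibility lemma, i.e. making precise and justifying that $\pack$ is governed only by the local order structure of a word and therefore commutes with restriction to a factor. Once this is established, the remainder is a bookkeeping argument comparing supports.
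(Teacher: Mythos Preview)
Your argument is correct. The paper states this proposition without proof, treating it as a known description of $\WQSym$ (the algebra is introduced with a reference to \cite{Hiv}); your direct verification by matching supports in $\K\langle A\rangle$, via the observation that packing is determined by the comparison pattern and hence commutes with restriction to a factor, is exactly the standard justification one would supply.
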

For example,
\begin{equation}
\M_{11} \M_{21} =
\M_{1121} + \M_{1132} + \M_{2221} + \M_{2231} + \M_{3321}.
\end{equation}

There is also a basis $\Phi_u$  of $\WQSym$ which is a lift of the fundamental basis 
$F_I$ of $QSym$, in the sense that under abelianization,
$\Phi_u(X)=F_I(X)$, where $I=\ev(u)$. It is defined as follows\footnote{This basis
is different from the basis ${\bf Q}$ of \cite{BZ}.}.

The refinement  order can be extended to packed words \cite{BZ,NTtri}.
We say that $w$ is finer than $w'$, and  write $w\raff w'$, iff
$w$ and $w'$ have same standardized word and the evaluation of $w$ is finer
than the evaluation of $w'$.
Then,
\begin{equation}
\label{Phi2M} 
\Phi_u := \sum_{v ; v\raff u} \M_v.
\end{equation}

Packed words can be naturally identified with \emph{ordered set partitions},
also called \emph{set compositions},
the letter $a_i$ at the $j$th position meaning that $j$ belongs to block $i$.
For example,
\begin{equation}
u=313144132 \ \leftrightarrow\ \Pi=(\{2,4,7\},\{9\},\{1,3,8\},\{5,6\})\,.
\end{equation}
As set composition $\Pi$ can be represented by
\emph{segmented permutation}, that is, a permutation 
obtained by reading  the blocks of
$\Pi$ in increasing order and inserting bars $|$ between blocks.
To avoid confusion, we shall always write segmented permutations between
parentheses.

For example,
\begin{equation}
\label{init-setpart}
\Pi=(\{2,4,7\},\{9\},\{1,3,8\},\{5,6\})\,\
\leftrightarrow (247|9|138|56),
\end{equation}
and we have, in both notations,
\begin{equation}
\Phi_{133142} = \M_{133142} + \M_{134152} + \M_{144253} + \M_{145263}.
\end{equation}
\begin{equation}
\Phi_{(14|6|23|5)}
 = \M_{(14|6|23|5)} + \M_{(14|6|2|3|5)} + \M_{(1|4|6|23|5)} +
   \M_{(1|4|6|2|3|5)}.
\end{equation}

Since $(\Phi_u)$ is triangular over  $(\M_u)$, it is a basis of
$\WQSym$. Note that the order used for summation is a restriction
of the refinement order on compositions, so is a boolean lattice.
Hence, denoting by $\max(w)$ the greatest letter of a word $w$,
\begin{equation}
\label{M2Phi}
\M_u = \sum_{v; v\raff u} (-1)^{\max(v)-\max(u)} \Phi_v.
\end{equation}
For example,
\begin{equation}
\M_{133142} = \Phi_{133142} - \Phi_{134152} - \Phi_{144253} + \Phi_{145263}.
\end{equation}

By construction, the basis $\Phi$ satisfies a product formula  similar to
that of Gessel's basis $F_I$ of $\QSym$ (whence the choice of notation).
To state it, we need an analogue of the shifted shuffle, defined on the special
class of segmented permutations encoding set compositions.

The \emph{shifted shuffle} $\alpha\ssh\beta$ of two such segmented
permutations is obtained from the usual shifted shuffle $\sigma\ssh\tau$ of
the underlying permutations $\sigma$ and $\tau$ by inserting bars
\begin{itemize}
\item between each pairs of letters coming from the same word if they were
separated by a bar in this word,
\item after each element of $\beta$ followed by an element of $\alpha$.
\end{itemize}

For example,
\begin{equation}
(2|1) \ssh (12)
 = (2|134) + (23|14) + (234|1) + (3|2|14) + (3|24|1) + (34|2|1),
\end{equation}
\begin{equation}
(1|2) \ssh (12)
 = (1|234) + (13|24) + (134|2) + (3|1|24) + (3|14|2) + (34|1|2).
\end{equation}

We then have \cite{NTtri}:

\begin{theorem}
The product and coproduct in the basis $\Phi$ are given by
\begin{equation}
\Phi_{\sigma'} \Phi_{\sigma''} = \sum_{\sigma\in \sigma'\ssh\sigma''}
\Phi_{\sigma}.
\end{equation}
\begin{equation}
\Delta\Phi_\sigma =
\sum_{\sigma'|\sigma''=\sigma \text{\ or\ } \sigma'\cdot  \sigma''=\sigma}
\Phi_{\Std(\sigma')} \otimes \Phi_{\Std(\sigma'')}.
\end{equation}
\end{theorem}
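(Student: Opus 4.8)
The plan is to replace the abstract definition of the basis $\Phi$ by a concrete polynomial realization and then read both formulas off the realization. Combining $\Phi_u=\sum_{v\raff u}\M_v$ with $\M_v=\sum_{\pack(w)=v}w$ gives $\Phi_\sigma=\sum_{\pack(w)\raff\sigma}w$, which I would unwind into a description directly analogous to Gessel's formula for $F_J$: if we read the letters of a word $w$ over $A$ in the order prescribed by the segmented permutation $\sigma$, then $w$ occurs in $\Phi_\sigma$ exactly when this reading sequence is weakly increasing with a \emph{strict} increase forced at every bar of $\sigma$ (and free elsewhere). The one consistency point to note is that a descent of the underlying permutation always sits across a bar, since the entries inside a single block are listed in increasing order; hence the ``strict at bars'' condition automatically subsumes the strictness already imposed by $\Std$.

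For the coproduct I would apply the ordinal-sum substitution $A\mapsto A+B$, where every letter of $A$ precedes every letter of $B$. Because the reading order sorts positions by letter value, each word occurring in $\Phi_\sigma(A+B)$ has all its $A$-letters appearing before all its $B$-letters in that order; so the reading sequence breaks at a single point, which cuts the segmented-permutation word of $\sigma$ into a prefix $\sigma'$ and a suffix $\sigma''$. When the cut lands on a bar we are in the case $\sigma'|\sigma''=\sigma$; when it lands inside a run we are in the case $\sigma'\cdot\sigma''=\sigma$, and the strictness that would be required across the cut is automatic, an $A$-letter being strictly smaller than a $B$-letter. Restandardizing the two halves against two copies of the model alphabet identifies the prefix and suffix sums with the realizations of $\Phi_{\Std(\sigma')}$ and $\Phi_{\Std(\sigma'')}$, and summing over the cut point produces exactly the two families in the statement.

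For the product I would multiply the realizations: $\Phi_{\sigma'}\Phi_{\sigma''}=\sum w'\,w''$ over all $w'$ compatible with $\sigma'$ and $w''$ compatible with $\sigma''$, the product being concatenation in $\K\langle A\rangle$. Since the split length $n'=|\sigma'|$ is fixed, every word of length $n'+n''$ occurs with coefficient $0$ or $1$, equal to $1$ precisely when its length-$n'$ prefix is compatible with $\sigma'$ and its suffix with $\sigma''$. The claimed right-hand side, on the other hand, assigns to a word $w$ the number of $\sigma\in\sigma'\ssh\sigma''$ compatible with $w$. Thus the theorem reduces to a single combinatorial identity: for each prefix/suffix split, the prefix and suffix are compatible with $\sigma'$ and $\sigma''$ if and only if exactly one element of $\sigma'\ssh\sigma''$ is compatible with $w$, and otherwise none is. I would prove this by merging the two reading orders of the factors: compatibility pins the underlying shuffled permutation down uniquely, and the two bar-insertion rules defining $\ssh$ for segmented permutations---retain each bar internal to a factor, and insert a bar wherever an entry of the (shifted) second factor is immediately followed in reading order by an entry of the first---are calibrated so that the forced strict ascents match exactly the genuine strict ascents of $w$ at the junction of the two factors.

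The main obstacle is precisely this last identity: checking that the bar-insertion rule reproduces, with multiplicity one, the strict/weak ascent pattern of an arbitrary compatible concatenation. I would organize the verification by induction on $n'+n''$, mirroring the recursive definition $au\ssh bv=\dots$ of the shifted shuffle, the delicate case being equality between the largest letters contributed by the two factors at the junction, which is exactly the configuration governed by the ``bar after a second-factor entry followed by a first-factor entry'' clause. An alternative would be to compute the product in the $\M$ basis, whose rule \eqref{prodG-wq} is already available, and transport it through the unitriangular change of basis \eqref{Phi2M}--\eqref{M2Phi}; but the sign-alternating Möbius inversion there makes the bookkeeping heavier than the direct bijection, so I would keep the realization argument as the primary route.
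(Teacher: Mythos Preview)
The paper does not prove this theorem itself: it is quoted from \cite{NTtri} (Novelli--Thibon, \emph{Polynomial realizations of some trialgebras}), whose title already signals that the polynomial-realization argument you outline is the intended one. Your plan is correct and matches that route.

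A couple of remarks to tighten the product half. Your description of $\Phi_\sigma$ as the sum of words $w$ whose $\sigma$-reading $w_{\sigma_1},w_{\sigma_2},\ldots$ is weakly increasing with strict ascents at the bars is right, and the consistency observation you flag (descents of the underlying permutation occur only at bars, since blocks are listed in increasing order) is precisely what makes this condition equivalent to $\pack(w)\raff u$. For the product, the uniqueness of the compatible $\sigma\in\sigma'\ssh\sigma''$ is most transparently stated as a \emph{stable merge}: the two reading sequences $a=(w'_{\sigma'_i})$ and $b=(w''_{\sigma''_j})$ are weakly increasing, and the bar rules demand $\le$ at every $\alpha\to\beta$ crossover and $<$ at every $\beta\to\alpha$ crossover. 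This forces, and is satisfied by, the unique merge in which ties always go to the $\sigma'$-sequence; your induction on $n'+n''$ proves exactly this. (One small slip: the delicate case is equality between the \emph{current heads} of the two sequences, not the ``largest letters''.) The converse direction---no $\sigma$ is compatible unless both factors are---follows by restricting the $\sigma$-reading to the positions coming from a single factor, which recovers the original compatibility condition for that factor.
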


For example, in both encodings, we have
\begin{equation}
\begin{split}
& \Phi_{1} \Phi_{121} = \Phi_{1121} + \Phi_{2132} + \Phi_{2121} + \Phi_{3121}.
\\
& \Phi_{(1)} \Phi_{(13|2)}
  = \Phi_{(124|3)} + \Phi_{(2|14|3)} + \Phi_{(24|13)} + \Phi_{(24|3|1)}.
\end{split}
\end{equation}
\begin{equation}
\begin{split}
%wqs::Phi(wqs::Phi([1,3,1,2]) * wqs::Phi([2,1]));
\Phi_{1312} \Phi_{21} =& \ \ \ \
 \Phi_{131221} + \Phi_{131231} + \Phi_{131232} + \Phi_{131243} +
 \Phi_{141232} \\ &+
 \Phi_{141321} + \Phi_{142321} + \Phi_{142331} + \Phi_{142341} +
 \Phi_{153421} \\ &+
 \Phi_{242321} + \Phi_{242331} + \Phi_{242341} + \Phi_{253421} +
 \Phi_{353421}.
\end{split}
\end{equation}

\begin{equation}
\Delta\Phi_{23121} =
1\otimes\Phi_{23121} + \Phi_{1}\otimes\Phi_{2321} +
\Phi_{11}\otimes\Phi_{121} + \Phi_{211}\otimes\Phi_{21} +
\Phi_{2121}\otimes\Phi_{1} + \Phi_{23121}\otimes1.
\end{equation}

Finally, the Hopf epimorphism $\WQSym\twoh QSym$ (commutative image)
gives rise by duality to a Hopf embedding $\Sym\hookr \WQSym^*$,
which is given by
\begin{equation}\label{SinW}
S^I\mapsto \sum_{\ev(u)=I}\N_u
\end{equation}
where $\N_u$ is the dual basis of $\M_u$.

%%%%%%%%%%%%%%%%%%%%%%%%%%%%%%%%%%%%%%%%%%%%%%%%%%%%%%%%%%%%%%%%%%%%%%%%%%%%%%%
%%%%%%%%%%%%%%%%%%%%%%%%%%%%%%%%%%%%%%%%%%%%%%%%%%%%%%%%%%%%%%%%%%%%%%%%%%%%%%%
%%%%%%%%%%%%%%%%%%%%%%%%%%%%%%%%%%%%%%%%%%%%%%%%%%%%%%%%%%%%%%%%%%%%%%%%%%%%%%%
\section{Polyhedral cones associated with packed words}\label{cones}

We are now in a position to explain the geometric significance of the averages
induced by diffusion. Identifying the sign sequences ${\bm{\varepsilon}}$ with
compositions $I$ and the basis
$R_{\varepsilon_1\dots,\varepsilon_{n-1}\bullet}$ with $\pm R_I$, we have to
understand why the coefficients $R_f^{\bm{\varepsilon\bullet}}$ defined in
Theorem~\ref{thmRm} are of the  form $\pm\chi(M_I)$, for a character $\chi$
of $QSym$.

We shall see that this is the reflect of a geometric property of certain
polyhedral cones associated with packed words.

%%%%%%%%%%%%%%%%%%%%%%%%%%%%%%%%%%%%%%%%%%%%%%%%%%%%%%%%%%%%%%%%%%%%%%%%%%%%%%%
\subsection{Cones associated with set compositions}

Let $u$ be a packed word of length~$n$ and
\begin{equation}
\Pi(u)= (B_1,\dots,B_r),\quad B_k=\{b_{k,1},\dots,b_{k,i_1}\}
\end{equation}
be the set composition of $[n]$ encoded by $u$
and let $\sigma$ the corresponding segmented permutation,
so that for $u=322123$, $\Pi(u)=(\{4\},\{2,3,5\},\{1,6\})$,
and $\sigma=(4|235|16)$.

\begin{definition}
The polyhedral cone $K_u$ in $\R^n$ is defined by the inequalities
\begin{equation}
\sum_{j=1}^k \sum_{i\in B_j} x_i \ge 0\quad 
\text{for $k=1,\dots,r$}\,.
\end{equation}
\end{definition}

For example, with $u=322123$ as above, $K_u$ is defined by the system
\begin{equation}
\left\{
\begin{matrix}
x_4 & \ge 0, \\
x_4+x_2+x_3+x_5 & \ge 0,\\
x_4 + x_2+x_3+x_5+x_1+x_6 &\ge 0\,.
\end{matrix}
\right.
\end{equation}

We denote by $\fc_S$ the characteristic function of a subset $S$ of $\R^n$.

Let $\FF_n$ be the space of classes of measurable functions
$\RR^n\rightarrow\RR$, where two functions differing on a set of measure zero
are identified.
Define an associative product on 
\begin{equation}
\FF=\bigoplus_{n\ge 0}\FF_n
\end{equation} 
by
\begin{equation}
(f\star g)\, (x_1,\dots,x_{m+n})
 = f(x_1,\dots,x_m)\, g(x_{m+1},\dots,x_{m+n}),
\end{equation}
for $f\in\FF_m$ and $g\in\FF_n$.

Let $\PP$ be the subalgebra of $\FF$ generated by the characteristic
functions $\fc_{K_u}$ of the cones $K_u$.

\begin{theorem}
\label{th1Ku}
The map $\alpha:\ \PP\rightarrow \WQSym$ defined by
\begin{equation}
\fc_{K_u}\mapsto (-1)^{\max(u)}\M_u
\end{equation}
is an isomorphism of algebras. That is,
if the product $\M_u\M_v$ in
$\WQSym$ is given by
\begin{equation}
\M_u\M_v = \sum_w c_{uv}^w\M_w\,,
\end{equation}
the characteristic function of the Cartesian product $K_u\times K_v$
is
\begin{equation}
\label{prodKu}
\fc_{K_u\times K_v}
 = \sum_w  (-1)^{\max(u)+\max(v)-\max(w)} c_{uv}^w\fc_{K_w}\,.
\end{equation}
\end{theorem}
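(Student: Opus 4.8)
The plan is to prove the map $\alpha$ is an algebra morphism by verifying the product formula \eqref{prodKu} directly, working with characteristic functions of the cones $K_u$. The key structural observation is that the product $\M_{u'}\M_{u''}$ in $\WQSym$ is governed by the convolution $u'\convW u''$ (Proposition on \eqref{prodG-wq}): the packed words $w$ appearing are exactly those of the form $w=v\cdot w'$ with $\pack(v)=u'$ and $\pack(w')=u''$. I would therefore try to relate the Cartesian product $K_{u'}\times K_{u''}\subseteq\R^{m+n}$ to a disjoint-type decomposition indexed by these same $w$, reading off the signs $(-1)^{\max(u')+\max(u'')-\max(w)}$ from how the block structure of $w$ refines the concatenated block structure of $u'$ and $u''$.

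\textbf{Key steps.}
First I would fix $u'$ of length $m$ and $u''$ of length $n$ and describe $K_{u'}\times K_{u''}$ explicitly by its defining inequalities: the partial block-sums of the first $m$ coordinates are nonnegative (from $K_{u'}$), and the partial block-sums of the last $n$ coordinates are nonnegative (from $K_{u''}$). Second, I would examine an arbitrary point $(x_1,\dots,x_{m+n})$ and determine which cone $K_w$ it lies in, where $w$ ranges over packed words of length $m+n$ whose restriction to the first $m$ letters packs to $u'$ and whose restriction to the last $n$ packs to $u''$. The inequalities defining $K_w$ are partial block-sums in the combined ordering dictated by $w$; the crucial point is that a point of $K_{u'}\times K_{u''}$ satisfies the $K_{u'}$ and $K_{u''}$ inequalities but the ordering of blocks in $w$ interleaves the two groups, and this is precisely where the signs enter. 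Third, I would set up the alternating identity: summing $(-1)^{\max(w)}\fc_{K_w}$ over the relevant $w$ should telescope, by an inclusion–exclusion over how blocks from the two words can be merged or separated, to give $(-1)^{\max(u')+\max(u'')}\fc_{K_{u'}\times K_{u''}}$. The combinatorics of this cancellation must match the convolution structure: merging a final block of the $u'$-part with an initial block of the $u''$-part lowers $\max(w)$ and carries a sign, exactly as in the $\convW$ product.

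\textbf{Main obstacle.}
The hard part will be step three: establishing the alternating-sum cancellation at the level of characteristic functions (equivalently, as an identity of indicator functions pointwise almost everywhere on $\R^{m+n}$). The subtlety is that the cones $K_w$ overlap on their boundaries and the sign pattern $(-1)^{\max(w)}$ must produce exact cancellation on the overlaps while reconstructing $\fc_{K_{u'}\times K_{u''}}$ on the interior. I expect the cleanest route is to reduce this to a local statement: fixing the signs of all the relevant partial sums determines a single cell, and within that cell I would check that the signed sum of the $\fc_{K_w}$ collapses to the correct value. This is essentially a combinatorial lemma about how the boolean refinement lattice of block mergings interacts with $\max$, and I anticipate it is exactly the geometric content that the authors intend to package into the companion result (the alternating sum of characteristic functions of similar cones) announced in the introduction. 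Once that pointwise identity is in hand, translating it into the algebra morphism statement via the definition of $\alpha$ and the product rule \eqref{prodG-wq} is routine; bijectivity of $\alpha$ then follows since $\{\fc_{K_u}\}$ generate $\PP$ and map to the basis $\{\M_u\}$ up to the nonzero scalars $(-1)^{\max(u)}$.
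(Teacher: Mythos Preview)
Your direct inclusion--exclusion plan is viable, and in fact the paper gives a proof along these lines (in the broader $\MQSym$ setting, Theorem~\ref{thMQSym}). What you are missing is the single elementary identity that makes the whole argument collapse:
\[
\sigma_+(a)\,\sigma_+(b)=\sigma_+(a)\,\sigma_+(a+b)+\sigma_+(b)\,\sigma_+(a+b)-\sigma_+(a+b).
\]
This is exactly your minimal example~\eqref{eq-k1}. With it, one runs an induction on $\ell(u')+\ell(u'')$: write $\fc_{K_{u'}}=-\alpha(u'_0)\sigma_+(s_{u'})$ by peeling off the last block (and likewise for $u''$), apply the identity with $a=s_{u'}$, $b=s_{u''}$, and match the three resulting terms with the three clauses of the recursive quasi-shuffle~\eqref{eq-shuf-aug}. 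Your ``cell-by-cell'' plan would work but is harder than necessary; the point is that the recursion for $\convW$ and the indicator identity have exactly the same shape.

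The paper's \emph{primary} proof, however, is different from yours. It first changes basis from $\M_u$ to $\Phi_u$ (Proposition~\ref{propCu}), replacing the cones $K_u$ by pairwise disjoint cones $C_u$ (obtained by imposing strict inequalities inside blocks), so that Theorem~\ref{th1Ku} becomes the equivalent Theorem~\ref{th1Cu}. Then each $C_u$ is encoded by its integer point transform, a Laurent series $F_u$ summing $z^\alpha$ over lattice points $\alpha\in C_u$; Proposition~\ref{pack2rat} computes the associated rational function explicitly, and Theorem~\ref{thm-starLaur} proves the product formula by a residue argument on these rational functions (pairing terms of the segmented shifted shuffle to kill spurious poles). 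This buys a different kind of insight: it links the cone algebra to generating-function manipulations and makes the connection with the rational mould formalism of Section~\ref{ratMould} transparent. Your approach, by contrast, is more elementary and self-contained, and is what the paper eventually isolates as the Rota--Baxter mechanism (Section~\ref{secRB}).
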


The rest of this section is devoted to the proof of this result.
We shall first translate it into an identity involving characteristic
functions of Cartesian products of different cones, and relate it to the
product of the basis $\Phi_u$ of $\WQSym$. We shall then represent  a cone by
a certain Laurent series and prove that the series corresponding to both sides
of the identity coincide over a nonempty open set.

The minimal example of Theorem \ref{th1Ku} is
\begin{equation}
\M_1 \M_1 = \M_{12} + \M_{21} + \M_{11},
\end{equation}
whose counterpart is
\begin{equation}
\label{eq-k1}
\fc_{K_1\times K_1}=\fc_{K_{12}}+\fc_{K_{21}}-\fc_{K_{11}}
\end{equation}
where
\begin{equation}
\begin{split}
K_1\times K_1 &= (x_1\ge 0, x_2\ge 0), \\
K_{12}        &= (x_1\ge 0, x_1+x_2\ge 0), \\
K_{21}        &= (x_2\ge 0, x_1+x_2\ge 0), \\
K_{11}        &= (x_1+x_2\ge 0). 
\end{split}
\end{equation}

This is to be compared with the product rule \eqref{prodF} of $\FQSym$,
whose minimal example is
\begin{equation}
\F_1\F_1=\F_{12}+\F_{21}\,.
\end{equation}
Although \eqref{prodF} can be derived from the embedding of $\FQSym$ into
$\WQSym$, its geometric interpretation is of a different nature. Indeed, on
the one hand, $\F_\alpha$ can be interpreted as the characteristic
function of a simplex, and the product rule reflects then the classical
decomposition of a product of simplexes.
On the other hand, \eqref{prodKu} is purely a linear relation beween
characteristic functions, and does not follow from a dissection
of the product $K_u\times K_v$, but rather from an argument
of inclusion-exclusion.

\begin{corollary}\label{corQSym}
Let $f$ be a probability distribution over $\RR$, and set, for
$u$ of length $n$
\begin{equation}
m_u(f)=(-1)^{\max(u)}\int_{K_u}f(x_1)\dots f(x_n)dx_1\dots dx_n\,.
\end{equation}
Then $m_u$ depends only on the integer composition $I=\ev(u)$,
so that we can denote it by $m_I$ as well. 
Then the formal series
\begin{equation}
S(f):=\sum_u m_u \N_u
\end{equation}
is grouplike for the coproduct of $\WQSym^*$. If one embeds
$\Sym$ in $\WQSym^*$ by (\ref{SinW}), then, 
\begin{equation}
S(f):=\sum_I m_I S^I
\end{equation}
is grouplike in $\Sym$ (that is, $m_I=\chi(M_I)$ for some character
of $QSym$).
\end{corollary}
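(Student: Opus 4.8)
The plan is to realize $S(f)$ as the grouplike element of $\WQSym^*$ that is dual to a character of $\WQSym$, obtained by transporting a single integration functional across the isomorphism $\alpha$ of Theorem~\ref{th1Ku}; the passage to $\Sym$ is then a matter of checking that this character factors through the commutative image. Concretely, I would introduce the linear functional $\phi_f$ on $\FF$ defined on $\FF_n$ by $\phi_f(g)=\int_{\RR^n}g(x_1,\dots,x_n)\,f(x_1)\cdots f(x_n)\,dx_1\cdots dx_n$, and by the identity on $\FF_0$. Since $f$ is a probability density, $f\in L^1(\RR)$ and $|\phi_f(g)|\le\|g\|_\infty$, so every integral converges. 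The decisive point is that $\phi_f$ is multiplicative for the product $\star$: for $g\in\FF_m$ and $h\in\FF_n$ the integrand of $\phi_f(g\star h)$ splits as a function of $(x_1,\dots,x_m)$ times a function of $(x_{m+1},\dots,x_{m+n})$, so Fubini's theorem yields $\phi_f(g\star h)=\phi_f(g)\,\phi_f(h)$. As $\phi_f$ sends the unit of $\PP$ (the constant $1$ in $\FF_0$) to $1$, its restriction to $\PP$ is a character of the algebra $\PP$.

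Next I would set $\chi:=\phi_f\circ\alpha^{-1}$, which is a character of $\WQSym$ by Theorem~\ref{th1Ku}. Since $\alpha^{-1}(\M_u)=(-1)^{\max(u)}\fc_{K_u}$, one gets $\chi(\M_u)=(-1)^{\max(u)}\int_{K_u}f(x_1)\cdots f(x_n)\,dx=m_u(f)$. Now characters of a graded connected Hopf algebra are exactly the grouplike elements of its graded dual — the identity $\langle g,xy\rangle=\langle g,x\rangle\langle g,y\rangle$ being dual to $\Delta g=g\otimes g$, and $\langle g,1\rangle=1$ to $\epsilon(g)=1$ — so the element $\sum_u\chi(\M_u)\N_u=\sum_u m_u\N_u=S(f)$ is grouplike in $\WQSym^*$.

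It remains to descend to $\Sym$, and here I would first show that $m_u$ depends only on $I=\ev(u)$. If $\ev(u)=\ev(u')=I$, then $\max(u)=\max(u')=\ell(I)$, so the two sign factors agree, and the flags $P_k=B_1\cup\dots\cup B_k$ and $P'_k$ attached to the set compositions of $u$ and $u'$ satisfy $|P_k|=|P'_k|=i_1+\dots+i_k$. Choosing $\pi\in\SG_n$ with $\pi(P_k)=P'_k$ for all $k$ (map each $B_k$ bijectively onto $B'_k$), the induced permutation of coordinates carries $K_u$ onto $K_{u'}$ while preserving Lebesgue measure and the symmetric integrand $\prod_i f(x_i)$; hence $m_u=m_{u'}=:m_I$. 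Granting this, the Hopf embedding (\ref{SinW}) sends $\sum_I m_I S^I$ to $\sum_I m_I\sum_{\ev(u)=I}\N_u=\sum_u m_u\N_u=S(f)$, so $\sum_I m_I S^I$ is grouplike in $\Sym$; dually this says precisely that $M_I\mapsto m_I$ is a character of $QSym$, that is, $m_I=\chi(M_I)$.

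The genuinely analytic content — and the one step to treat with care — is the multiplicativity of $\phi_f$, which rests on Fubini's theorem for the product measure $f(x_1)\cdots f(x_n)\,dx$; the boundedness of characteristic functions together with $f\in L^1$ makes this legitimate. Everything else is formal, the algebraic substance being carried entirely by Theorem~\ref{th1Ku} and by the standard duality between characters and grouplike elements.
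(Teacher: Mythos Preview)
Your proof is correct and follows the same route the paper has in mind: the corollary is stated as an immediate consequence of Theorem~\ref{th1Ku}, and you have simply made that implication explicit by composing the integration functional $\phi_f$ with $\alpha^{-1}$ to obtain a character of $\WQSym$, then invoking the standard character/grouplike duality. The paper does not spell out the proof of the corollary at all; your added verification that $m_u$ depends only on $\ev(u)$ via a block-permutation of coordinates, and your care with Fubini, fill in details the paper leaves to the reader.
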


%In the language of mould calculus, this means that the mould $(m_I)$
%is symmetrel (see the end of Section \ref{moyennes}).
%A remarkable example is provided by the Catalan family \cite{Men1}, for which
%\begin{equation}
% \rho (x) = a \gamma \sigma_+ (x) e^{-  x} + b  \sigma_- (x) e^{ x} 
%\end{equation}
%where $a$, $b$ are two real numbers (see Section \ref{catalan}).

%%%%%%%%%%%%%%%%%%%%%%%%%%%%%%%%%%%%%%%%%%%%%%%%%%%%%%%%%%%%%%%%%%%%%%%%%%%%%%%
\subsection{Changing bases and cones}

To prove Theorem \ref{th1Ku}, it will be easier to work with the fundamental
basis $\Phi_u$ of $\WQSym$.

\begin{proposition}
\label{propCu}
If we identify as above $(-1)^{\max(u)}\M_u$ with the characteristic
function of $K_u$, then $(-1)^{\max(u)}\Phi_u$ gets identified
with the characteristic function of the cone $C_u$, defined
by the conditions
\begin{equation}
\sum_{i=1}^kx_{\sigma_i}\ 
\quad 
\begin{cases}
<0& \ \text{if $\sigma_k$
      is not the end of a block of $\sigma$},\\
\ge 0& \ \text{otherwise.}
\end{cases}
\end{equation}
for $k=1,\dots,n$.
\end{proposition}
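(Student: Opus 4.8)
The plan is to establish the claimed identification by a change-of-basis argument that reduces it to Theorem~\ref{th1Ku}, combined with a direct geometric verification of the cone decomposition. Recall from \eqref{Phi2M} that $\Phi_u=\sum_{v\raff u}\M_v$, and inverting via \eqref{M2Phi} that $\M_u=\sum_{v\raff u}(-1)^{\max(v)-\max(u)}\Phi_v$. Under the identification $\fc_{K_u}\leftrightarrow(-1)^{\max(u)}\M_u$ of Theorem~\ref{th1Ku}, the element $(-1)^{\max(u)}\Phi_u$ corresponds to the function
\begin{equation}
\sum_{v\raff u}(-1)^{\max(u)}\M_v \ \leftrightarrow\ \sum_{v\raff u}(-1)^{\max(u)-\max(v)}\fc_{K_v}.
\end{equation}
So the content of the proposition is the purely geometric claim that this alternating sum of characteristic functions of the cones $K_v$, over all $v$ finer than $u$, equals $\fc_{C_u}$, where $C_u$ is the cone carved out by the stated strict/weak sign conditions on the partial sums $\sum_{i=1}^k x_{\sigma_i}$.

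First I would make the geometry explicit. Fix the segmented permutation $\sigma$ encoding $u$, and work in the reordered coordinates $y_k=x_{\sigma_k}$, so that both $K_u$ and $C_u$ are described by sign conditions on the partial sums $T_k=y_1+\dots+y_k$. The cone $K_u$ imposes $T_k\ge 0$ exactly at the indices $k$ that are ends of blocks of $\sigma$ (and no condition otherwise), while $C_u$ imposes $T_k\ge 0$ at block-ends and the \emph{strict} condition $T_k<0$ at all non-block-ends. The words $v\raff u$ finer than $u$ are obtained by \emph{splitting} blocks of $\sigma$ into smaller consecutive blocks (same underlying standardized permutation, finer evaluation), so each such $v$ corresponds to choosing an additional set of cut-points among the non-block-ends of $\sigma$, and $K_v$ then imposes $T_k\ge 0$ at the original block-ends \emph{and} at each newly chosen cut-point.

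The key step is then an inclusion–exclusion identity pointwise in $y\in\RR^n$. At a generic point, consider the set $P=\{k : T_k\ge 0\}$ of indices where the partial sum is nonnegative, restricted to the non-block-ends of $\sigma$; call this set $Q$. A refinement $v$ contributes to the sum (i.e.\ $y\in K_v$) precisely when all the block-ends satisfy $T_k\ge 0$ and every cut-point chosen for $v$ lies in $Q$. The sign $(-1)^{\max(u)-\max(v)}$ is $(-1)$ to the number of extra cut-points. Thus, assuming the block-end conditions hold, the total contribution at $y$ is
\begin{equation}
\sum_{\text{cut-sets}\,D\subseteq Q}(-1)^{|D|},
\end{equation}
which equals $1$ if $Q=\emptyset$ and $0$ otherwise. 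Now $Q=\emptyset$ means no non-block-end index has $T_k\ge 0$, i.e.\ $T_k<0$ at every non-block-end --- which, together with the block-end conditions $T_k\ge 0$, is exactly the defining condition of $C_u$. Hence the alternating sum equals $\fc_{C_u}$ pointwise. The main obstacle I anticipate is bookkeeping: one must verify carefully that the refinement order $v\raff u$ matches exactly the block-splitting picture (that finer evaluation together with equal standardization corresponds precisely to subdividing blocks of $\sigma$ at non-block-ends), so that the cut-points range over \emph{all} subsets of the non-block-ends independently, with the correct sign $(-1)^{|D|}$. Once that combinatorial dictionary is pinned down, the inclusion–exclusion collapse is immediate, and measure-zero boundary sets (where some $T_k=0$) are irrelevant in $\FF_n$.
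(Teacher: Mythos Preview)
Your proof is correct and uses essentially the same ingredients as the paper: the triangular change of basis between $\M$ and $\Phi$, together with the geometric relationship between the cones $K_v$ and $C_u$. The paper takes the marginally shorter route in the other direction, observing that $K_u$ is the \emph{disjoint} union of the $C_v$ for $v\raff u$ (immediate from the sign conditions), so that $\fc_{K_u}=\sum_{v\raff u}\fc_{C_v}$ matches $(-1)^{\max(u)}\M_u=\sum_{v\raff u}(-1)^{\max(v)}\Phi_v$ on the nose, whereas you invert first and then recover $\fc_{C_u}$ by inclusion--exclusion; the two arguments are M\"obius inverses of each other.
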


\Proof
The change of basis from the $M_u$ to the $\Phi_u$ is given by \eqref{M2Phi}.
It can be rewritten as
\begin{equation}
(-1)^{\max(u)}\M_u = \sum_{v; v\raff u} (-1)^{\max(v)} \Phi_v,
\end{equation}
and the proposition follows from the fact that $K_u$ is the union of the
$C_v$, for $v\raff u$, which is clear from their definitions.
\qed

Now, Theorem~\ref{th1Ku} rewrites in the basis $\Phi_u$ as
\begin{theorem}
\label{th1Cu}
The map $\alpha:\ \PP\rightarrow \WQSym$ defined by
\begin{equation}
\fc_{C_u}\mapsto (-1)^{\max(u)}\Phi_u
\end{equation}
is an isomorphism of algebras.
\end{theorem}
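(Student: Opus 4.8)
The plan is to prove Theorem~\ref{th1Cu} by showing that the product rule for the $\Phi_u$ basis, namely $\Phi_{\sigma'}\Phi_{\sigma''}=\sum_{\sigma\in\sigma'\ssh\sigma''}\Phi_\sigma$ via the shifted shuffle of segmented permutations, is realized geometrically at the level of characteristic functions of the cones $C_u$. Concretely, if $u$ has length $m$ and $v$ has length $n$, then the Cartesian product cone $C_u\times C_v\subset\RR^{m+n}$ is carved out by the $m$ inequalities (strict or non-strict, according to block-ends) on the partial sums $x_{\sigma_1}+\dots+x_{\sigma_k}$ coming from $u$, together with the $n$ inequalities coming from $v$ applied to the \emph{last} $n$ coordinates, i.e.\ to partial sums $x_{m+\tau_1}+\dots+x_{m+\tau_\ell}$. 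The key point I would establish is that, after accounting for the sign $(-1)^{\max(u)+\max(v)}=(-1)^{\max(w)}$ matching (since $\max(w)=\max(u)+\max(v)$ for any $w$ appearing in the shifted shuffle of segmented permutations, these being shifted in value), one has the clean set-theoretic identity
\begin{equation}
\fc_{C_u\times C_v}=\sum_{w\in u\ssh v}\fc_{C_w},
\end{equation}
where the sum is over the shifted shuffle and the cones $C_w$ live in $\RR^{m+n}$.

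First I would fix coordinates and write out explicitly what membership in $C_u\times C_v$ means. A point $(x_1,\dots,x_{m+n})$ lies in $C_u\times C_v$ iff $(x_1,\dots,x_m)\in C_u$ and $(x_{m+1},\dots,x_{m+n})\in C_v$. I would then argue that the combined system of inequalities is equivalent to specifying, for a total interleaving of the two ordered index sets, a consistent sign pattern on \emph{all} partial sums read in that interleaved order---and that such interleavings, together with the bar/no-bar (i.e.\ $\ge 0$ versus $<0$) data inherited from the block structures of $u$ and $v$, are precisely the segmented permutations $w$ in the shifted shuffle $u\ssh v$. The disjointness of the cones $C_w$ (so that the right-hand side is genuinely a characteristic function, not an overcounting) should follow from the trichotomy that distinct $w$ impose incompatible strict/non-strict sign conditions on at least one partial sum, so the interiors are disjoint and the boundary overlaps have measure zero, which suffices in $\FF$.

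The main obstacle, and the step I would spend the most care on, is verifying that the interleaving of the two sets of partial-sum inequalities genuinely corresponds to the \emph{shifted} shuffle of segmented permutations as defined in the excerpt---in particular reproducing the two bar-insertion rules (bars between letters from the same word that were separated by a bar, and a bar after each element of $\beta$ followed by an element of $\alpha$). The second rule is the delicate one: it encodes that whenever a run of coordinates from $v$ is immediately followed by a coordinate from $u$, the boundary between them must be a block-end (a $\ge 0$ condition), which I would trace back to the fact that the $C_v$-inequalities constrain the \emph{tail} partial sums while the $C_u$-inequalities constrain independent coordinates, forcing a sign change exactly at such transitions. Once this combinatorial dictionary is pinned down, the equality of characteristic functions is immediate, and Theorem~\ref{th1Cu}---hence, by Proposition~\ref{propCu} and the change of basis \eqref{M2Phi}, also Theorem~\ref{th1Ku}---follows. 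I note that the authors signal an alternative route via integer point transforms (Laurent series representing the cones) and checking agreement on an open set; if the direct combinatorial matching of bar-insertion rules proves too unwieldy, I would fall back on that analytic method, where the shuffle structure emerges automatically from multiplying the rational generating functions of the cones.
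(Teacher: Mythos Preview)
Your central claim is incorrect: the identity
\[
\fc_{C_u\times C_v}=\sum_{w\in u\ssh v}\fc_{C_w}
\]
does not hold as a disjoint-union decomposition, and the sign bookkeeping you propose is wrong. You assert that $\max(w)=\max(u)+\max(v)$ for every $w$ in the shifted shuffle, but this is false: the shift in the segmented-permutation shuffle is a shift of \emph{positions}, not of block labels, and the bar-insertion rules allow blocks from the two factors to merge. Already in the paper's minimal example $\Phi_1\Phi_1=\Phi_{11}+\Phi_{21}$ one has $\max(u)+\max(v)=2$ while $\max(11)=1$ and $\max(21)=2$, and the geometric counterpart is the \emph{signed} relation
\[
\fc_{C_1\times C_1}=\fc_{C_{21}}-\fc_{C_{11}}.
\]
The cones $C_{21}=\{x_2\ge 0,\ x_1+x_2\ge 0\}$ and $C_{11}=\{x_1<0,\ x_1+x_2\ge 0\}$ are not disjoint (e.g.\ $x_1=-1$, $x_2=2$ lies in both), so your disjointness argument cannot work. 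The paper emphasises exactly this point: the relation \eqref{prodKu} ``does not follow from a dissection of the product $K_u\times K_v$, but rather from an argument of inclusion-exclusion.''

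Because the signs are genuinely present and the cones overlap, there is no purely set-theoretic partition to exhibit, and your combinatorial dictionary between interleavings and the bar rules, even if carried out correctly, would not yield the theorem. The paper's proof instead encodes each $C_u$ by its integer point transform $F_u$ (a Laurent series), computes the associated rational function explicitly (Proposition~\ref{pack2rat}), and then proves the $\star$-product identity \eqref{eqprodseries} by a residue argument showing that spurious poles cancel in pairs. This analytic route is precisely what handles the alternating signs automatically; your fallback remark about the integer point transform is in fact the actual method, not an alternative to a direct combinatorial proof.
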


In particular, if $u$ is a nondecreasing packed word, such as $u=111233$ (so
that $\sigma=(123|4|56)$), then the characteristic function of $C_u$ has the
form
\begin{equation} 
\fc_{C_u}
 = \sigma_{\varepsilon_1}(x_1) \sigma_{\varepsilon_2}(x_1+x_2)
   \dots
   \sigma_{\varepsilon_n}(x_1+\dots+x_n)
\end{equation}
so that integrals of $f(x_1)\dots f(x_n)$ over $C_u$ have the form
\eqref{eq-defmf}.
Thus, \eqref{Phi2M} implies that the series $S(f)$ coincides with $R_f$ as
defined in Theorem \ref{thmRm}, so that this is actually a special case
of Theorem~\ref{th1Cu}.

The minimal example of Theorem \ref{th1Cu} is
\begin{equation}
\Phi_1 \Phi_1 = \Phi_{11} + \Phi_{21},
\end{equation}
whose counterpart is
\begin{equation}
\fc_{C_1\times C_1}=\fc_{C_{21}} - \fc_{C_{11}}
\end{equation}
where
\begin{equation}
\begin{split}
C_1\times C_1 &= (x_1\ge 0, x_2\ge 0), \\
C_{21}        &= (x_2\ge 0, x_1+x_2\ge 0), \\
C_{11}        &= (x_1<0, x_1+x_2\ge 0). 
\end{split}
\end{equation}

In particular, since $\Phi_u$ is a lift of $F_I$, we have:
\begin{corollary}\label{corPhi}
If $\chi$ is the character of $QSym$ defined from a function $f$ as in
Corollary \ref{corQSym}, then
\begin{equation}
\chi(F_I)=(-1)^{\max(u)}\int_{C_u}f(x_1)\dots f(x_n)dx_1\dots dx_n
\end{equation}
for any $u$ such that $I=\ev(u)$.
\end{corollary}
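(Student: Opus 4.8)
The plan is to deduce this as a purely linear consequence of the identification already set up in Proposition~\ref{propCu}, combined with the defining property of $\chi$ recorded in Corollary~\ref{corQSym}; no new geometry is needed beyond what those results supply. The pivot is the change-of-basis formula \eqref{Phi2M}, namely $\Phi_u=\sum_{v\raff u}\M_v$. Since $\Phi_u$ is a lift of $F_I$ and each $\M_v$ is a lift of $M_{\ev(v)}$, taking the commutative image converts this into the identity $F_I=\sum_{v\raff u}M_{\ev(v)}$ in $QSym$, valid for any $u$ with $I=\ev(u)$.

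First I would apply the character $\chi$ to this identity. By Corollary~\ref{corQSym} one has $\chi(M_J)=m_J=(-1)^{\max(w)}\int_{K_w}f(x_1)\dots f(x_n)\,dx_1\dots dx_n$ for any packed word $w$ with $\ev(w)=J$, and $\chi$ is in particular linear, so
\begin{equation}
\chi(F_I)=\sum_{v\raff u}m_{\ev(v)}=\sum_{v\raff u}(-1)^{\max(v)}\int_{K_v}f(x_1)\dots f(x_n)\,dx_1\dots dx_n.
\end{equation}

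Next I would collapse this sum to a single integral over the cone $C_u$. The identifications $(-1)^{\max(w)}\M_w\leftrightarrow\fc_{K_w}$ (from Theorem~\ref{th1Ku}) and $(-1)^{\max(u)}\Phi_u\leftrightarrow\fc_{C_u}$ (from Proposition~\ref{propCu}) turn \eqref{Phi2M} into a genuine pointwise identity of characteristic functions in $\PP$,
\begin{equation}
(-1)^{\max(u)}\fc_{C_u}=\sum_{v\raff u}(-1)^{\max(v)}\fc_{K_v}.
\end{equation}
Integrating both sides against $f(x_1)\dots f(x_n)$ and comparing with the previous display gives exactly $\chi(F_I)=(-1)^{\max(u)}\int_{C_u}f(x_1)\dots f(x_n)\,dx_1\dots dx_n$. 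As the argument holds verbatim for every $u$ with $\ev(u)=I$, the stated formula follows for all such $u$.

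I do not anticipate a real obstacle: once Proposition~\ref{propCu} is available the corollary is formal, and one never needs the multiplicativity of Theorem~\ref{th1Cu}, only the underlying linear change of basis. The sole points requiring care are the sign bookkeeping around the refinement order (using that $v\raff u$ forces $\max(v)\ge\max(u)$, so that the signs $(-1)^{\max(v)-\max(u)}$ are well-behaved) and the observation that \eqref{Phi2M} may legitimately be read as an equality of honest characteristic functions rather than of formal symbols --- which is precisely the content of the identifications in Proposition~\ref{propCu}.
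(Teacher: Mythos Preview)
Your argument is correct and essentially coincides with the paper's own reasoning. The paper does not spell out a proof: it simply prefaces the corollary with ``since $\Phi_u$ is a lift of $F_I$'', leaving the reader to unfold exactly the change of basis \eqref{Phi2M} and the identification of Proposition~\ref{propCu} that you have written down; your inclusion-exclusion identity $(-1)^{\max(u)}\fc_{C_u}=\sum_{v\raff u}(-1)^{\max(v)}\fc_{K_v}$ is the M\"obius inverse of the decomposition $\fc_{K_u}=\sum_{v\raff u}\fc_{C_v}$ used in the proof of Proposition~\ref{propCu}, so it is a genuine equality of functions and your integration step is justified.
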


\begin{example} (The Sparre Andersen formula) {\rm
Let $f$ be a probability distribution on $\RR$ and $(X_n)_{n\ge 1}$
be a sequence of independent random variables of distribution $f$.
Define $S_n=X_1+\dots+X_n$ and
\begin{equation}
\tau_n = \P(S_1<0,S_2<0,\dots,S_{n-1}<0,S_n\ge 0), \quad \tau(s)=\sum_{n\ge 1}\tau_ns^n,
\end{equation}
and
\begin{equation}
q_n = \P(S_n\ge 0).
\end{equation}
The celebrated formula of E. Sparre Andersen ({\it cf.} \cite[p. 413]{Feller})
states that
\begin{equation}\label{Anders}
\log\frac1{1-\tau(s)} = \sum_{n\ge 1} q_n \frac{s^n}{n}\,.
\end{equation}
This is immediate from Corollary \ref{corPhi}, since
\begin{equation}
\tau_n=-\chi(F_n)\ \text{and}\ q_n=-\chi(M_n). 
\end{equation}
But $F_n$ is the complete homogeneous symmetric function $h_n$
and $M_n$ is the power-sum $p_n$. So \eqref{Anders} follows
by applying $\chi$ to the well-known (Newton) identity
\begin{equation}
\sum_{n\ge 0}h_n s^n =\exp\sum_{n\ge 1}\frac{p_n}{n}s^n\,.
\end{equation}
Another explanation of the Sparre Andersen formula relying on different Hopf
algebras \cite{Man} will appear in the doctoral thesis of A. Mansuy. 
}
\end{example}

\bigskip

Theorems~\ref{th1Ku} and~\ref{th1Cu} being equivalent, we shall prove the
latter.

%%%%%%%%%%%%%%%%%%%%%%%%%%%%%%%%%%%%%%%%%%%%%%%%%%%%%%%%%%%%%%%%%%%%%%%%%%%%%%%
\subsection{The integer point transform}

Since a polyhedral cone is characterized by the set of its integral points,
we shall encode $C_u$ by the Laurent series
\begin{equation}
F_u := (-1)^{\max(u)}\sum_{\alpha\in C_u\cap\ZZ^n}z_1^{\alpha_1}
z_2^{\alpha_2}\dots z_n^{\alpha_n}\,.
\end{equation}

This identification endows the vector space spanned by the series $F_u$ with the $\star$
product.

Such a series has a nonempty domain of convergence $D_u$ in $\CC^n$, and
inside it, represents a rational function $f_u(x)$. 
The pair $(f_u, D_u)$ allows to reconstruct $C_u$ unambiguously.
It is sometimes called the integer point transform of the polyhedral cone
$C_u$~\cite{BR}.

Note that we could equivalently work with the Laplace transform
of the characteristic function, and identify $C_u$ with
\begin{equation}
(-1)^{\max(u)}\int_{C_u}e^{-\<p,x\>}dx.
\end{equation}
This is again a rational function, which, together with
the domain of convergence of the integral, allows the reconstruction
of $C_u$.

For example,
\begin{equation}
\begin{split}
F_{11} &=
  - \sum_{\alpha_1\leq-1} z_1^{\alpha_1}
    \sum_{\alpha_2\geq-\alpha_1} z_2^{\alpha_2} \\
&=
  - \sum_{\alpha_1\leq-1} z_1^{\alpha_1} z_2^{-\alpha_1}
    \sum_{\alpha'_2\geq 0} z_2^{\alpha'_2}
\end{split}
\end{equation}
so that
\begin{equation}
F_{11} = \left(- \frac{1}{1-z_2} \frac{z_2/z_1}{1-z_2/z_1}\,, 
|z_2|<|z_1|,|z_2|<1\right)\,.
\end{equation}
We also have
\begin{equation}
F_{12} := \left(\frac{1}{1-z_2} \frac{1}{1-z_1/z_2}\,, 
|z_1|<|z_2|<1\right)\,,
\end{equation}
\begin{equation}
F_{21} := \left(\frac{1}{1-z_1} \frac{1}{1-z_2/z_1}\,, 
|z_2|<|z_1|<1\right)\,,
\end{equation}

More generally, we have
\begin{proposition}
\label{pack2rat}
Let $u$ be a packed word, and $\sigma$ be the corresponding
segmented permutation. Then, the rational function associated
with $F_u$ is
\begin{equation}
f_u = (-1)^{\max(u)} \frac{1}{1-z_{\sigma_n}}
\prod_{i=1}^{n-1} g(\sigma_i,\sigma_{i+1}),
\end{equation}
where
\begin{equation}
\label{eq-seg}
g(\sigma_i,\sigma_{i+1}) = \left\{
\begin{array}{rl}
\frac{z_{\sigma_{i+1}}/z_{\sigma_{i}}}{1-z_{\sigma_{i+1}}/z_{\sigma_{i}}}
& \text{if $\sigma_i$ and $\sigma_{i+1}$ are not separated by a bar,}
\\[8pt]
\frac{1}{1-z_{\sigma_{i}}/z_{\sigma_{i+1}}}
& \text{otherwise.} 
\end{array}\right.
\end{equation}
\end{proposition}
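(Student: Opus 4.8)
The plan is to compute the integer point transform $F_u$ directly from the defining inequalities of $C_u$, then read off the rational function. Recall from Proposition~\ref{propCu} that $C_u$ is defined, in terms of the segmented permutation $\sigma$, by the conditions that the partial sums $s_k := \sum_{i=1}^k x_{\sigma_i}$ satisfy $s_k \ge 0$ when $\sigma_k$ ends a block and $s_k < 0$ otherwise. The natural move is to change coordinates from the $x_{\sigma_i}$ to the partial sums. More precisely, setting $s_0 = 0$, we have $x_{\sigma_i} = s_i - s_{i-1}$, so the monomial $\prod_j z_j^{\alpha_j}$ attached to a lattice point $\alpha$ rewrites, after grouping, as a product in which the variable $s_k$ contributes with the ``local'' factor $z_{\sigma_k}/z_{\sigma_{k+1}}$ for $k<n$ and $z_{\sigma_n}$ for $k=n$ (telescoping the exponents $\alpha_{\sigma_i} = s_i - s_{i-1}$ through the product). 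This is the mechanism already visible in the worked example for $F_{11}$, where the substitution $\alpha_2' = \alpha_1 + \alpha_2$ decouples the sum into independent geometric series.

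First I would make this telescoping precise: writing $\prod_{i=1}^n z_{\sigma_i}^{\,s_i - s_{i-1}}$ and collecting powers of each $s_k$, the exponent of $s_k$ is $1$ for each occurrence and the base is $z_{\sigma_k}/z_{\sigma_{k+1}}$ (for $1 \le k \le n-1$) and $z_{\sigma_n}$ (for $k=n$), since $s_0 = 0$ drops out. Because the constraints on the $s_k$ are \emph{independent} — each $s_k$ ranges freely over either $\{s_k \ge 0\}$ or $\{s_k < 0\}$ according to whether $\sigma_k$ ends a block — the lattice-point sum factorizes completely as a product of $n$ independent geometric series, one in each ``variable'' $s_k$. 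This independence is the crucial structural point and is exactly why $C_u$ (rather than $K_u$) is the convenient cone to work with: the barred/unbarred dichotomy of $\sigma$ translates directly into the half-line of summation for each $s_k$.

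Next I would evaluate each factor. For the end-of-block variables ($s_k \ge 0$) the geometric series $\sum_{s_k \ge 0} (z_{\sigma_k}/z_{\sigma_{k+1}})^{s_k}$ gives $1/(1 - z_{\sigma_k}/z_{\sigma_{k+1}})$, which is the second case of \eqref{eq-seg}. For the non-end variables ($s_k < 0$, i.e.\ $s_k \le -1$) the series $\sum_{s_k \le -1} (z_{\sigma_k}/z_{\sigma_{k+1}})^{s_k}$ equals $(z_{\sigma_{k+1}}/z_{\sigma_k})/(1 - z_{\sigma_{k+1}}/z_{\sigma_k})$, which is the first case of \eqref{eq-seg}; here I would be careful to note that summing over the negative half-line forces the variable to appear in its reciprocal form, matching the asymmetry between the two cases of \eqref{eq-seg}. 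The final variable $s_n \ge 0$ always ends a block (it is the last letter read) and yields the prefactor $1/(1 - z_{\sigma_n})$. Multiplying the $n$ factors and inserting the sign $(-1)^{\max(u)}$ from the definition of $F_u$ produces exactly the claimed formula.

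The main obstacle I anticipate is purely bookkeeping rather than conceptual: keeping the index shifts consistent through the telescoping, and in particular verifying the direction of each geometric series so that the two cases of \eqref{eq-seg} come out with the correct variable in the numerator. I would also need to check convergence — that the product genuinely represents the Laurent series $F_u$ on a nonempty open domain $D_u$, obtained by intersecting the individual convergence conditions $|z_{\sigma_k}/z_{\sigma_{k+1}}| < 1$ (end of block) or $|z_{\sigma_{k+1}}/z_{\sigma_k}| < 1$ (otherwise) together with $|z_{\sigma_n}| < 1$ — but this follows immediately from the factorized form, since a finite product of convergent geometric series converges on the intersection of their annuli, which is a nonempty open cone in $(\CC^*)^n$ by a chain of modulus inequalities along $\sigma$. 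No genuinely hard analysis is required; the content is the change of variables to partial sums together with the independence of the resulting constraints.
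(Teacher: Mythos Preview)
Your proposal is correct and takes essentially the same approach as the paper: both pass to the partial sums $s_k=\sum_{i\le k}\alpha_{\sigma_i}$ and sum the resulting geometric series. The paper presents this recursively---the substitution $i'_2=i_1+i_2$ peels off the first factor $g(\sigma_1,\sigma_2)$ and leaves a sum of the same shape on $(\sigma_2,\ldots,\sigma_n)$---whereas you do the global change of variables at once and observe that the constraints on the $s_k$ decouple, yielding the full product directly; the content is identical.
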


\Proof 
Let us write the expansion of $(-1)^{\max(u)} f_\sigma$ as
\begin{equation}
\sum_{i_1 \rR_1 0} z_{\sigma_1}^{i_1}
\sum_{i_1+i_2 \rR_2 0} z_{\sigma_2}^{i_2}
\sum_{i_1+i_2+i_3 \rR_3 0} z_{\sigma_3}^{i_3} \dots
\end{equation}
where $\rR_i$ is either $\geq$ or $<$ depending on whether there is a bar after
$i$ in $\sigma$ or not.

Let $i'_2=i_1+i_2$. Then,
\begin{equation}
\phi_\sigma =
\sum_{i_1 \rR_1 0} z_{\sigma_1}^{i_1} z_{\sigma_2}^{-i_1}
\sum_{i'_2 \rR_2 0} z_{\sigma_2}^{i'_2} 
\sum_{i'_2+i_3 \rR_3 0} z_{\sigma_3}^{i_3} \dots
\end{equation}
This sum splits therefore into two partial sums, the first one being equal to
$g(\sigma_1,\sigma_2)$, the remaining part being $\pm f_{(\sigma_2,\dots)}$.
\qed

This expression of $f_u$ can be simplified: indeed, both choices
of~\eqref{eq-seg} are equal up to sign when one does not consider the domain
of convergence. So, if one writes the denominators in the form
$z_{\sigma_i}-z_{\sigma_{i+1}}$, the terms with a $-1$ match the bars, so that
we have

\begin{corollary}
The function $f_u=f_\sigma$ simplifies as
\begin{equation}
f_\sigma = \frac{1}{z_{\sigma_n}-1}
  \prod_{i=1}^{n-1} \frac{z_{\sigma_{i+1}}}{z_{\sigma_i} - z_{\sigma_{i+1}}}.
\end{equation}
\end{corollary}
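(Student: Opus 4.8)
The plan is to start directly from the expression of $f_\sigma$ established in Proposition~\ref{pack2rat}, namely
\begin{equation}
f_\sigma = (-1)^{\max(u)} \frac{1}{1-z_{\sigma_n}}
\prod_{i=1}^{n-1} g(\sigma_i,\sigma_{i+1}),
\end{equation}
and to show that the claimed simplified form is obtained simply by rewriting each factor over a common-style denominator of the shape $z_{\sigma_i}-z_{\sigma_{i+1}}$ and tracking the resulting signs. The key observation is that, disregarding the domain of convergence (which is irrelevant for the rational function itself), the two alternatives in \eqref{eq-seg} become the same rational expression up to sign: in the ``not separated by a bar'' case one has
\begin{equation}
\frac{z_{\sigma_{i+1}}/z_{\sigma_i}}{1-z_{\sigma_{i+1}}/z_{\sigma_i}}
= \frac{z_{\sigma_{i+1}}}{z_{\sigma_i}-z_{\sigma_{i+1}}},
\end{equation}
while in the ``separated by a bar'' case one has
\begin{equation}
\frac{1}{1-z_{\sigma_i}/z_{\sigma_{i+1}}}
= \frac{z_{\sigma_{i+1}}}{z_{\sigma_{i+1}}-z_{\sigma_i}}
= -\frac{z_{\sigma_{i+1}}}{z_{\sigma_i}-z_{\sigma_{i+1}}}.
\end{equation}
Thus each factor equals $z_{\sigma_{i+1}}/(z_{\sigma_i}-z_{\sigma_{i+1}})$ times a sign, where the sign is $-1$ exactly when there is a bar between $\sigma_i$ and $\sigma_{i+1}$.

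Next I would collect the signs. The leading scalar already carries $(-1)^{\max(u)}$, and the factor $1/(1-z_{\sigma_n})$ should be rewritten as $-1/(z_{\sigma_n}-1)$, contributing one further $-1$. Each internal bar contributes a $-1$ from the above rewriting, so the total sign is $(-1)^{\max(u)} \cdot (-1) \cdot (-1)^{b}$, where $b$ is the number of internal bars in $\sigma$. The crux of the argument is the bookkeeping identity $\max(u) = r = b+1$, where $r$ is the number of blocks of the set composition $\Pi(u)$: indeed a segmented permutation with $r$ blocks has exactly $r-1$ internal bars, and $\max(u)$ is precisely the number of blocks since $u$ is packed. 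Hence $(-1)^{\max(u)} (-1) (-1)^{b} = (-1)^{r+1+r-1} = (-1)^{2r} = +1$, so all signs cancel and we are left with
\begin{equation}
f_\sigma = \frac{1}{z_{\sigma_n}-1}
\prod_{i=1}^{n-1} \frac{z_{\sigma_{i+1}}}{z_{\sigma_i}-z_{\sigma_{i+1}}},
\end{equation}
as claimed.

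The computation is essentially routine once the factor-by-factor rewriting is set up; the only place demanding genuine care is the sign count, and I expect the main (minor) obstacle to be verifying that the number of internal bars equals $\max(u)-1$ and that this matches the parity contributed by the leading $(-1)^{\max(u)}$ together with the inverted final factor. One must be careful that the ``otherwise'' branch of \eqref{eq-seg} is the one carrying the sign flip, and that the $1/(1-z_{\sigma_n})$ term is correctly absorbed; a clean way to present this is to write the whole product with every denominator already in the form $z_{\sigma_i}-z_{\sigma_{i+1}}$ (and $z_{\sigma_n}-1$ for the last factor) and then count that the accumulated signs are $(-1)^{1+\#\text{bars}+\max(u)}$, which is $+1$ by the identity $\#\text{bars}=\max(u)-1$. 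No convergence issues arise because the statement concerns only the underlying rational function, not its series expansion.
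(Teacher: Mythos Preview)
Your proof is correct and follows essentially the same approach as the paper: both observe that the two alternatives in \eqref{eq-seg} coincide as rational functions up to a sign, with the $-1$'s coming precisely from the bars, so that the accumulated signs cancel against the leading $(-1)^{\max(u)}$. You have simply made the sign bookkeeping explicit where the paper leaves it as a one-line remark.
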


\bigskip
For example,
\begin{equation}
\phi_{211} = \phi_{(23|1)}
= + \frac{1}{1-z_1}
    \frac{1}{1-z_3/z_1} \frac{z_3/z_2}{1-z_3/z_2}
= \frac{z_3z_1}{(z_2-z_3)(z_3-z_1)(z_1-1)}.
\end{equation}
\begin{equation}
\begin{split}
\phi_{1223} = \phi_{(1|23|4)}
&= - \frac{1}{1-z_4} \frac{1}{1-z_3/z_4}
    \frac{z_3/z_2}{1-z_3/z_2} \frac{1}{1-z_1/z_2} \\
&
= \frac{z_2z_3z_4}{(z_1-z_2)(z_2-z_3)(z_3-z_4)(z_4-1)}.
\end{split}
\end{equation}

\begin{theorem}
\label{thm-starLaur}
The $\star$ product of the Laurent series $F_\sigma$ is given by
\begin{equation}\label{eqprodseries}
F_{\sigma}\star F_{\sigma'}
 = \sum_{\sigma''\in \sigma\ssh\sigma'} F_{\sigma''}.
\end{equation}
that is, by the same formula as the $\Phi_\sigma$ of $\WQSym$.
\end{theorem}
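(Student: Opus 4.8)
The plan is to establish the identity for rational functions and then lift it back to the Laurent series. First I would unwind the definition of $\star$: if $\sigma$ has length $m$ and $\sigma'$ length $n$, then $F_\sigma\star F_{\sigma'}$ is $F_\sigma(z_1,\dots,z_m)$ times the series $F_{\sigma'}$ written in the shifted variables $z_{m+1},\dots,z_{m+n}$, that is, the integer point transform of the Cartesian product cone $C_\sigma\times C_{\sigma'}$. Both $F_\sigma\star F_{\sigma'}$ and $\sum_{\sigma''\in\sigma\ssh\sigma'}F_{\sigma''}$ are honest Laurent series, so to prove they coincide it suffices to exhibit a nonempty common domain of convergence containing a full torus $\{|z_i|=r_i\}$ and to check that both represent the same rational function there: on such a domain the Laurent coefficients are recovered by torus integrals and are thus determined by the function.

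The reduction to a rational identity is then clean. By the Corollary to Proposition \ref{pack2rat}, the rational function $f_\sigma$ attached to $F_\sigma$ depends only on the underlying permutation $\pi$ of $\sigma$, since the bars affect only signs and the domain, which cancel in $f_\sigma$. Moreover the bar-insertion rule defining $\sigma\ssh\sigma'$ attaches to each underlying shuffle $\rho\in\pi\shuffle(\pi'[m])$ exactly one segmented permutation $\sigma''$, so the right-hand side becomes, as a rational function, $\sum_{\rho\in\pi\shuffle(\pi'[m])}f_\rho$. Everything thus reduces to the shuffle identity
\[
f_\pi(z_1,\dots,z_m)\; f_{\pi'[m]}(z_{m+1},\dots,z_{m+n}) \;=\; \sum_{\rho\in\pi\shuffle(\pi'[m])} f_\rho,
\]
for words on disjoint alphabets, where $f_w=\frac{1}{z_{w_k}-1}\prod_{i=1}^{k-1}\frac{z_{w_{i+1}}}{z_{w_i}-z_{w_{i+1}}}$.

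I would prove this by induction on $m+n$ using the first-letter factorization $f_{cw}=\frac{z_{w_1}}{z_c-z_{w_1}}f_w$ and the recursion $au'\shuffle bv'=a(u'\shuffle bv')+b(au'\shuffle v')$. The hard part, and the step I expect to be the real obstacle, is that the factor $\frac{z_{w_1}}{z_c-z_{w_1}}$ couples the prepended letter to the first letter of the remainder, so the naive induction does not close. The device that fixes this is to prove instead the \emph{stronger} statement, for disjoint words $u,v$ and a fresh letter $c$,
\[
\sum_{\rho\in u\shuffle v} f_{c\rho} \;=\; (z_c-1)\,f_{cu}\,f_{cv},
\]
which carries the coupling factor along in the auxiliary variable $z_c$. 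Applying the recursion and the inductive hypothesis to $(u',v)$ with auxiliary letter $a=u_1$ and to $(u,v')$ with $b=v_1$, the two terms combine through the elementary partial-fraction identity $(z_a-1)(z_c-z_b)-(z_b-1)(z_c-z_a)=(z_a-z_b)(z_c-1)$, which is precisely what produces the factor $(z_c-1)$. The base cases ($u$ or $v$ empty) are immediate, and specializing $z_c=0$ collapses $f_{c\rho}$ to $-f_\rho$ and $(z_c-1)f_{cu}f_{cv}$ to $-f_uf_v$, yielding the desired $\sum_\rho f_\rho=f_uf_v$.

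Finally I would return to the series assertion of Theorem \ref{thm-starLaur}. The remaining technical point is the common domain: taking all $|z_i|$ small and ordered compatibly with the chains defining the cones $C_\rho$ makes every geometric series in the expansions (whose ratios are the $z_{\sigma_{i+1}}/z_{\sigma_i}$, $z_{\sigma_i}/z_{\sigma_{i+1}}$ and the $z_{\sigma_n}$) converge simultaneously, giving a nonempty open domain containing a torus on which $F_\sigma\star F_{\sigma'}$ and all the $F_{\sigma''}$ converge. On this domain the rational identity just proved forces equality of the Laurent coefficients, completing the proof.
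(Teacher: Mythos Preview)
Your proof is correct and proceeds by a genuinely different route from the paper's. Both arguments first reduce to a rational-function identity on a common domain of convergence; the paper is explicit that this domain is $\mathcal{O}\cap\{|z_j|<|z_i|<1:\ j>m,\ i\le m\}$, which you should also spell out rather than leaving as ``ordered compatibly with the chains''---this is exactly what guarantees that every $F_{\sigma''}$ in the segmented shifted shuffle converges there. For the rational identity itself, the paper regards the right-hand side as a function of $z_{\sigma_1}$ and argues by residue cancellation: the potential poles at each $z_{\sigma'_j}$ vanish because the two adjacent shuffles $\alpha\,\sigma_1\sigma'_j\,\beta$ and $\alpha\,\sigma'_j|\sigma_1\,\beta$ contribute opposite residues, leaving only the pole at $z_{\sigma_2}$; setting $z_{\sigma_1}=0$ then strips the first letter and closes an induction on $|\sigma|$. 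Your approach instead strengthens the target to $\sum_{\rho\in u\shuffle v}f_{c\rho}=(z_c-1)f_{cu}f_{cv}$ with a free auxiliary letter $c$, proves this by a direct partial-fraction computation (the identity $(z_a-1)(z_c-z_b)-(z_b-1)(z_c-z_a)=(z_a-z_b)(z_c-1)$), and recovers the original claim by setting $z_c=0$ at the end. Your device trades the residue-pairing argument for a purely algebraic induction, which is clean and self-contained; the paper's argument, by contrast, makes visible the combinatorial mechanism---the pairing of adjacent shuffles---that forces the spurious poles to cancel.
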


\Proof
Consider the product $F_{\sigma}\star F_{\sigma'}$ with $\sigma$ of length $n$
and $\sigma'$ of length $p$.
The domain of convergence of this product is a nonempty open set $\O$, which
is the Cartesian product of an open set of $\CC^n$ and an open set of $\CC^p$.

Then, the intersection of $\O$ with the domains $|z_j|<|z_i|<1$ for all
$j\in[n+1,n+p]$ and all $i\in[1,n]$ is again a nonempty open set $\O'$.

All the $F_{\sigma''}$ occurring in the r.h.s. of~\eqref{eqprodseries}
converge in $\O'$, by definition of the segmented shifted shuffle,
so that both sides of~\eqref{eqprodseries} define rational functions in $\O'$.
Thus, we only have to prove equality of these rational functions.

Consider the right-hand side $h$ as a function of $z_{\sigma_1}$.
Since for each fraction, the numerator has a degree strictly lower
than the denominator, this is also true for their sum. Now consider $h$ as a
reduced fraction. Apart from $z_{\sigma_2}$, the possible poles of $z_{\sigma_1}$
are the $z_{\sigma'_j}$ for any $j$. But these poles do not occur: consider
the residue. Multiplying $h$ by $(z_{\sigma_1}-z_{\sigma'_j})$ and then
putting $(z_{\sigma_1}=z_{\sigma'_j})$ yields zero in all permutations where
$\sigma_1$ and $\sigma'_j$ are not neighbours, and the remaining permutations
can be paired as $\alpha \sigma_1\sigma'_j\beta$,
$\alpha \sigma'_j|\sigma_1\beta$ whose common residue is also zero.
So as a reduced fraction in $z_{\sigma_1}$, $h$ has a denominator of degree
$1$ and a numerator of degree $0$, hence is equal to a constant divided by
$z_{\sigma_1}-z_{\sigma_2}$.

Now, putting $z_{\sigma_1}=0$, the term corresponding to a segmented
permutation $\tau$ either gives zero if $z_{\tau_1}\not= z_{\sigma_1}$
or gives $-f_{\tau_2\dots}$ otherwise. By induction, this gives the desired
formula.
\qed

Let us illustrate the proof on the example of the product $F_{11} F_{21}$.
We have
\begin{equation}
\begin{split}
f_{11}\star f_{21} = f_{(12)} \star f_{(2|1)}
 = \frac{z_2}{(z_1-z_2)(z_2-1)}
   \frac{z_3}{(z_4-z_3)(z_3-1)}
\end{split}
\end{equation}
and the sum $f_{1121}+f_{1221}+f_{1321}+f_{2221}+f_{2321}+f_{3321}$ which is
also $f_{(124|3)}+f_{(14|23)}+f_{(14|3|2)}+f_{(4|123)}+f_{(4|13|2)}
+f_{(4|3|12)}$ in terms of segmented permutations translates into  fractions as
\begin{equation}
\begin{split}
& \quad
  \frac{z_2z_4z_3}{(z_1-z_2)(z_2-z_4)(z_4-z_3)(z_3-1)}
+ \frac{z_4z_2z_3}{(z_1-z_4)(z_4-z_2)(z_2-z_3)(z_3-1)}
\\ &
+ \frac{z_4z_3z_2}{(z_1-z_4)(z_4-z_3)(z_3-z_2)(z_2-1)}
+ \frac{z_1z_2z_3}{(z_4-z_1)(z_1-z_2)(z_2-z_3)(z_3-1)}
\\ &
+ \frac{z_1z_3z_2}{(z_4-z_1)(z_1-z_3)(z_3-z_2)(z_2-1)}
+ \frac{z_3z_1z_2}{(z_4-z_3)(z_3-z_1)(z_1-z_2)(z_2-1)}.
\end{split}
\end{equation}
Now, the permutations having a pole $z_1-z_2$
are $(124|3)$, $(4|123)$, and $(4|3|12)$, that is, if one forgets their $1$,
$(24|3)$, $(4|23)$, and $(4|3|2)$, which are the permutations up to rescaling
belonging to $(1)\ssh (2|1)$.
As for the poles of the r.h.s, let us consider for example
the pole $z_2-z_4$. In that case, it appears in the words $(124|3)$ and
$(14|23)$. Then one easily checks that starting with
$f_{(124|3)}+f_{(14|23)}$, multipliying by $z_2-z_4$ and then
putting $z_2=z_4$, one finds $0$.
The same holds for the pole $z_1-z_4$, where one regroups
$(14|23)$ with $(4|123)$ and $(14|3|2)$ with $(4|13|2)$.

\begin{note}{\rm
Theorem~\ref{thm-starLaur} proves in particular that the $m_u(f)$
satisfy the same product formula as the $\M_u$ of $\WQSym$, hence that the
$m_I$ are the images of the $M_I$ by a character of $QSym$, which concludes
the proof of Theorem~\ref{th1Cu} and of its equivalent form
Theorem~\ref{th1Ku}.
}
\end{note}

Another  proof, exploiting the natural  splitting of the shuffle structure,  is provided in
Section~\ref{sec:newcones} in a broader context: set partitions 
each part are replaced by multisets paritions, allowing to consider integrals on a more
general class of cones.

%%%%%%%%%%%%%%%%%%%%%%%%%%%%%%%%%%%%%%%%%%%%%%%%%%%%%%%%%%%%%%%%%%%%%%%%%%%%%%%
%%%%%%%%%%%%%%%%%%%%%%%%%%%%%%%%%%%%%%%%%%%%%%%%%%%%%%%%%%%%%%%%%%%%%%%%%%%%%%%
%%%%%%%%%%%%%%%%%%%%%%%%%%%%%%%%%%%%%%%%%%%%%%%%%%%%%%%%%%%%%%%%%%%%%%%%%%%%%%%
\section{Rational moulds for $\WQSym$}\label{ratMould}

%%%%%%%%%%%%%%%%%%%%%%%%%%%%%%%%%%%%%%%%%%%%%%%%%%%%%%%%%%%%%%%%%%%%%%%%%%%%%%%
\subsection{Set compositions as rational functions}

There is another way to encode elements of $\WQSym$ by rational functions,
which in turn define nonlinear operators on certain function spaces.
Evaluating these operators on a fixed function gives then rise to a character
of $\WQSym$, which factors through $QSym$ when the target space is a
commutative algebra.

Let $z_i$, $i\ge 1$ be indeterminates. 
For a set of integers $I$, let $z^I=\prod_{i\in I}z_i$, and for a packed word
$u$ encoding a set composition $\Pi(u)=(B_1,\dots,B_m)$, define the rational
function
\begin{equation} 
M_u(Z)
 = \prod_{k=1}^m
         \left( \prod_{i=1}^k z^{B_i} -1 \right)^{-1}.
\end{equation}
For example, with $u=2131231$, $\Pi(u)=(\{2,4,7\},\{1,5\},\{3,6\})$,
and
\begin{equation} 
M_{2131231}(Z)=\frac1{(z_2z_4z_7-1)(z_2z_4z_7z_1z_5-1)
(z_2z_4z_7z_1z_5z_3z_6-1) }.
\end{equation}
We endow the algebra of rational functions $\CC(Z)$ with the shifted product
%(the same as in the previous Section)
\begin{equation} 
f(z_1,\dots,z_p)\star g(z_1,\dots,z_q)
=
f(z_1,\dots,z_p)\, g(z_{p+1},\dots,z_{p+q}).
\end{equation}
The resulting structure is called the rational mould algebra \cite{Cha,CHNT}.

\begin{lemma}
\label{lemphi}
The linear map $\phi:\ \WQSym\rightarrow \CC(Z)$ defined by
$\phi(\M_u)=M_u(Z)$ is an injective homomorphism of algebras for
the $\star$ product on $\CC(Z)$.
\end{lemma}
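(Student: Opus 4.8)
The plan is to prove that $\phi$ is an injective algebra homomorphism by verifying multiplicativity on the $\M_u$ basis and then establishing injectivity through a triangularity/degree argument. The map $\phi$ is defined on basis elements, so it is automatically a well-defined linear map; the content is in the two algebraic properties.

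First I would verify that $\phi$ is a homomorphism, i.e.\ that $\phi(\M_{u'}\M_{u''}) = \phi(\M_{u'})\star\phi(\M_{u''}) = M_{u'}(z_1,\dots,z_p)\,M_{u''}(z_{p+1},\dots,z_{p+q})$. Using the product formula \eqref{prodG-wq}, the left-hand side is $\sum_{u\in u'\convW u''}M_u(Z)$, so the claim reduces to the rational-function identity
\begin{equation}
M_{u'}(z_1,\dots,z_p)\,M_{u''}(z_{p+1},\dots,z_{p+q})
 = \sum_{u\in u'\convW u''} M_u(Z).
\end{equation}
Here the convolution $u'\convW u''$ ranges over all packed words $u=v\cdot w$ with $\pack(v)=u'$ and $\pack(w)=u''$; combinatorially this interleaves the value-levels of $u'$ and $u''$, either keeping the blocks of $u''$ strictly above those of $u'$ or merging some top blocks of $u'$ with bottom blocks of $u''$. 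The key observation is that each factor $\bigl(\prod_{i=1}^k z^{B_i}-1\bigr)^{-1}$ in $M_u$ is a \emph{prefix product} of the variable-monomials attached to the blocks, read in block order. The plan is to show that the partial-fraction-type identity
\begin{equation}
\frac{1}{(P-1)(Q-1)} = \frac{1}{(P-1)(PQ-1)} + \frac{1}{(Q-1)(PQ-1)}
\end{equation}
(valid with $P,Q$ the block-monomials from $u'$ and $u''$), iterated over the block structure, exactly reproduces the three cases of the convolution at each junction: the two fractions on the right correspond to stacking $u''$ strictly above versus interleaving, while merging a block of $u'$ with one of $u''$ corresponds to replacing $P$ and $Q$ by a single combined factor. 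I would set this up as an induction on the number of blocks of $u'$ and $u''$, peeling off the lowest block and matching the recursion of $\convW$ against the recursion of this rational identity.

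For injectivity, I would exploit that the $M_u(Z)$ are naturally indexed by the prefix-monomials $\pi_k=\prod_{i=1}^k z^{B_i}$, which are distinct Laurent monomials determined by the set composition $\Pi(u)$. The denominators $\prod_k(\pi_k-1)$ have poles on the distinct hypersurfaces $\pi_k=1$, and distinct packed words give distinct collections of such hypersurfaces (the ascending prefix-sums of blocks recover the set composition uniquely). Hence a linear relation $\sum_u c_u M_u(Z)=0$ can be analyzed by examining residues along each hyperplane $\pi_k=1$, or equivalently by expanding each $M_u$ as a multivariate power series whose support is the cone generated by the $\pi_k$ and invoking the fact that these supports, suitably ordered, are triangular. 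The plan is to pick a generic dominant monomial ordering and show the leading terms of the $M_u$ are linearly independent, forcing all $c_u=0$.

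\textbf{The main obstacle} I expect is the multiplicativity step: matching the purely combinatorial recursion of the convolution $\convW$---with its three-way split between stacking, interleaving, and block-merging---against the rational partial-fraction recursion, while correctly tracking the prefix-monomial structure as blocks are consumed from the bottom. The bookkeeping of which prefix products appear after merging (where a block of $u'$ and a block of $u''$ fuse, multiplying their monomials) is delicate, and care is needed to confirm that every term of the convolution is produced exactly once with the correct denominator. Injectivity, by contrast, should follow cleanly once the distinctness of the pole configurations is made precise.
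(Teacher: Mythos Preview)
Your displayed partial-fraction identity is incorrect: one checks directly that
\[
\frac{1}{(P-1)(PQ-1)} + \frac{1}{(Q-1)(PQ-1)} = \frac{P+Q-2}{(P-1)(Q-1)(PQ-1)} \neq \frac{1}{(P-1)(Q-1)}.
\]
The identity actually needed for the quasi-shuffle recursion carries a third term, corresponding precisely to the block-merging case of $\convW$:
\[
\frac{1}{(P-1)(Q-1)} = \frac{1}{(P-1)(PQ-1)} + \frac{1}{(Q-1)(PQ-1)} + \frac{1}{PQ-1}.
\]
You allude to merging verbally but drop it from the displayed identity, and this is exactly the place where the bookkeeping you flag as delicate has to be made explicit. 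With the corrected three-term identity the induction on the number of blocks can be carried out, but it remains laborious.

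The paper bypasses all of this. Expanding each factor $(\prod_{i\le k} z^{B_i}-1)^{-1}$ as a geometric series in the region $|z_k|>1$ yields
\[
M_u(Z) = \sum_{\alpha\in\ZZ_{<0}^n,\ \pack(\alpha)=u} z^\alpha,
\]
with packing of negative-integer words taken for the natural order. But this is nothing other than the polynomial realization $\M_u=\sum_{\pack(w)=u}w$ that \emph{defines} the product in $\WQSym$, with the alphabet $\ZZ_{<0}$ in place of $A$; the $\star$-product on rational functions is concatenation of words in shifted variable sets, so multiplicativity is automatic. Injectivity is equally immediate, since distinct $u$ give Laurent expansions with disjoint monomial supports --- a cleaner form of your pole-separation idea. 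Your strategy is not wrong in spirit, but the paper collapses the whole lemma to a single observation about the Laurent expansion and avoids the inductive recursion entirely.
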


\Proof The Laurent expansion of $M_u(Z)$ in the domain ($|z_k|>1$ for all $k$)
is
\begin{equation} 
M_u(Z) = \sum_{\alpha\in\ZZ_{<0}^n;\ \pack(\alpha)=u}z^\alpha
\end{equation}
where the packing of words over the negative integers
is defined w.r.t. the natural order ({\it e.g.},
$\pack(-3,-5,-3,-8)=3231$). 
Moreover, the expansions of different $M_u$ have no monomial in
common, so that they are linearly independent.
\qed

This embedding is the perfect analog of the embedding of $\FQSym$ in the
rational mould algebra defined in \cite{CHNT}.
In this reference, rational functions encode operators on formal integrals.
There is an analogous situation here, which can be first understood in terms
of the ordinary Fourier transform.

%%%%%%%%%%%%%%%%%%%%%%%%%%%%%%%%%%%%%%%%%%%%%%%%%%%%%%%%%%%%%%%%%%%%%%%%%%%%%%%
\subsection{Associated operators}

Let us consider functions $f$ from the real line to some associative algebra,
represented in terms of their Fourier transforms as
\begin{equation}
f(t)=\int_{-\infty}^\infty \hat f(\nu)e^{2i\pi \nu t}d\nu\,.
\end{equation}
Introduce new variables $\nu_k$
and set $z_k=e^{2i\pi \nu_k}$. Assuming
convergence of the integrals, we can now interpret
the mould $M_u(z_1,\dots, z_m)$ as the $m$-linear operator
\begin{equation}
f_1\otimes\dots\otimes f_m \mapsto g=M_u[f_1,\dots,f_m]
\end{equation}
where
\begin{equation}
g(t)=\int_{\RR^m}\hat f_1(\nu_1)
\dots\hat f_m(\nu_m)M_u(e^{2i\pi \nu_1},\dots,
e^{2i\pi \nu_m})
e^{2i\pi \nu_1t}\dots
e^{2i\pi \nu_m t}d\nu_1\dots d\nu_m.
\end{equation}
Expanding $M_u$ as a Laurent series near infinity  and
regarding it as  the Fourier series of a distribution,
we see that the result is
\begin{equation}
g(t) = \sum_{\alpha\in\ZZ_{<0}^n;\ \pack(\alpha)=u}
f_1(t+\alpha_1)f_2(t+\alpha_2)\dots f_m(t+\alpha_m)\,.
\end{equation}
For $f_1=f_2=\dots=f_m=f$, we set simply $g(t)=M_u[f](t)$.
For example,
\begin{equation}
M_1[f](t)=\sum_{n\ge 1}f(t-n)
\end{equation}
\begin{equation}
M_{11}[f_1,f_2](t)=\sum_{n\ge 1}f_1(t-n)f_2(t-n)
\end{equation}

\begin{equation}
M_{12}[f_1,f_2](t)=\sum_{n_1,n_2\ge 1}f_1(t-n_1-n_2)f_2(t-n_2)
=M_{21}[f_2,f_1](t).
\end{equation}

We can now rephrase Lemma \ref{lemphi} as follows:
\begin{proposition}
If we set, for $|u|=p$ and $|v|=q$, 
\begin{equation}
(M_u\star M_v) [f_1,\dots f_{p+q}]
 = M_u[f_1\dots,f_p]\, M_v[f_{p+1},\dots,f_{p+q}]
\end{equation}
then, the map $\M_u\rightarrow M_u$ is an injective homomorphism.
\end{proposition}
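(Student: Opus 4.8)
The plan is to deduce this directly from Lemma~\ref{lemphi} by checking that the passage from the rational function $M_u(Z)$ to the operator $M_u[\cdot]$ is a bijective linear correspondence intertwining the two $\star$ products. The bridge between the two pictures is the Laurent expansion established in the proof of Lemma~\ref{lemphi}: in the domain $|z_k|>1$,
\[
M_u(Z)=\sum_{\alpha\in\ZZ_{<0}^n;\ \pack(\alpha)=u}z^\alpha,
\]
and, reading $z_k=e^{2i\pi\nu_k}$ as a Fourier variable, the monomial $z^\alpha$ corresponds to the multilinear operator sending $(f_1,\dots,f_m)$ to $t\mapsto\prod_j f_j(t+\alpha_j)$. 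Summing over $\alpha$ with $\pack(\alpha)=u$ reproduces exactly the operator $M_u[f_1,\dots,f_m](t)$ displayed above.

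First I would record that this dictionary is a linear bijection onto its image. Since $\pack$ is a function, the sets $\{\alpha:\pack(\alpha)=u\}$ partition $\ZZ_{<0}^n$ as $u$ ranges over packed words of a given length; hence the operators $M_u$ have pairwise disjoint monomial supports and are linearly independent. Equivalently, the assignment $M_u(Z)\mapsto M_u[\cdot]$ loses no information, because one recovers the Laurent coefficients, and thus the rational function, from the shifts appearing in the operator.

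Next I would verify that the dictionary intertwines the $\star$ products. On the rational side, $(M_u\star M_v)(z_1,\dots,z_{p+q})=M_u(z_1,\dots,z_p)\,M_v(z_{p+1},\dots,z_{p+q})$ expands, in the domain where all $|z_k|>1$, as $\sum_{\alpha,\beta}z^{(\alpha,\beta)}$ with $\pack(\alpha)=u$ and $\pack(\beta)=v$. Under the dictionary the monomial $z^{(\alpha,\beta)}$ is the operator $(f_1,\dots,f_{p+q})\mapsto\bigl(t\mapsto\prod_{j\le p}f_j(t+\alpha_j)\prod_{j>p}f_j(t+\beta_{j-p})\bigr)$, which is precisely the pointwise product of $M_u[f_1,\dots,f_p](t)$ and $M_v[f_{p+1},\dots,f_{p+q}](t)$. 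Thus the shifted product of operators matches the $\star$ product of rational functions term by term, and the composite $\M_u\mapsto M_u(Z)\mapsto M_u[\cdot]$ is the injective homomorphism $\phi$ of Lemma~\ref{lemphi} followed by a $\star$-algebra isomorphism.

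The only genuine subtlety, which I would treat as the main point requiring care, is the analytic justification for identifying the Laurent expansion near infinity with the Fourier-series description of the operator: one must restrict to a function space on which the relevant integrals converge and on which the termwise interpretation of $M_u$ as a sum over integer shifts is valid. Once that identification is secured, the algebraic content reduces to the elementary bookkeeping of multi-indices carried out above, and the proposition is merely a reformulation of Lemma~\ref{lemphi} in the language of operators.
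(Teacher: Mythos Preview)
Your proposal is correct and matches the paper's approach exactly: the paper introduces this proposition with the phrase ``We can now rephrase Lemma~\ref{lemphi} as follows,'' treating it as an immediate restatement, and you have simply spelled out the dictionary between rational functions and operators that makes this rephrasing valid. If anything, you have supplied more detail than the paper does, since the paper offers no proof beyond that one-line remark.
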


Observing that 
\begin{equation}
M_{11}[f_1,f_2]=M_1[f_1f_2]\ \text{and}\
M_{12}[f_1,f_2]=M_1[M_1[f_1]f_2]
\end{equation}
and that
\begin{equation}
\M_1^2=
\M_{12}+\M_{21}+ \M_{11},
\end{equation}
we have
\begin{equation}\label{RBM}
M_1[ f_1M_1[f_2] + M_1[f_1]f_2 + f_1f_2] = M_1[f_1]M_1[f_2]\,.
\end{equation}
Let us set for short $M=M_1$. Then, (\ref{RBM}) means that $M$ is
a Rota-Baxter operator (see Section \ref{secRB}), and moreover, we have:

\begin{proposition}
\label{decM}
Any operator $M_u$ can be written as a composition of operators $M$ and
products of functions. More precisely, if $\Pi(u)=(B_1,\dots,B_m)$
is the associated set composition, set 
\begin{equation}
b_i=\prod_{j\in B_i}f_j \quad \text{and\ $[b_i] = M[b_i]$}\,.
\end{equation}
Then,
\begin{equation}\label{eqdecM}
\M_u[f_1,\dots,f_n]
= [\dots [[b_1]b_2]\dots b_m]\,.
\end{equation}
\end{proposition}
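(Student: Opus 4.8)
The plan is to reduce both sides of \eqref{eqdecM} to the same explicit sum over strictly increasing tuples of negative integers, and then match them directly. The key observation is that the Laurent‑expansion formula for $\M_u[f_1,\dots,f_n]$ (the sum over $\alpha\in\ZZ_{<0}^n$ with $\pack(\alpha)=u$) can be rewritten block by block, while the nested expression $[\dots[[b_1]b_2]\dots b_m]$ unfolds one variable at a time under repeated application of $M=M_1$.

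First I would make the left-hand side explicit. For $\alpha\in\ZZ_{<0}^n$, the condition $\pack(\alpha)=u$ means precisely that $\alpha$ is constant on each block $B_k$ of $\Pi(u)=(B_1,\dots,B_m)$, that it takes exactly $m$ distinct negative values, and that the value on $B_k$ is the $k$-th smallest among them. Writing $v_k$ for the common value $\alpha_j$ with $j\in B_k$, such $\alpha$ are in bijection with tuples $v_1<v_2<\dots<v_m<0$, and for each of them $\prod_{j=1}^n f_j(t+\alpha_j)=\prod_{k=1}^m\prod_{j\in B_k}f_j(t+v_k)=\prod_{k=1}^m b_k(t+v_k)$, where $b_k=\prod_{j\in B_k}f_j$. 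Hence
\begin{equation}
\M_u[f_1,\dots,f_n](t)=\sum_{v_1<v_2<\dots<v_m<0}\ \prod_{k=1}^m b_k(t+v_k).
\end{equation}
Next I would evaluate the right-hand side by induction on $m$, proving more generally that for \emph{any} functions $b_1,\dots,b_m$ one has $[\dots[[b_1]b_2]\dots b_m](t)=\sum_{v_1<\dots<v_m<0}\prod_{k=1}^m b_k(t+v_k)$. The base case $m=1$ is $M[b_1](t)=\sum_{v_1<0}b_1(t+v_1)$, which is the definition of $M$. For the inductive step, set $g=[\dots[[b_1]b_2]\dots b_{m-1}]$; one more application gives $[g\,b_m](t)=M[g\,b_m](t)=\sum_{v_m<0}g(t+v_m)\,b_m(t+v_m)$, and substituting the inductive formula for $g(t+v_m)$ and then shifting each inner variable by $v_m$ turns the constraints $w_k<0$ into $v_k<v_m$, producing exactly the sum over $v_1<\dots<v_m<0$. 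Comparing this with the formula for $\M_u$ above finishes the proof.

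There is no serious obstacle here; the argument is a direct bookkeeping of the shift structure of $M$. The only points requiring care are the correct reading of the packing condition on negative-integer words (blocks receive values in increasing order, and $\max(u)=m$ is the number of distinct values), together with the index change $v_k=v_m+w_k$, which is precisely what converts the successive applications of $M$ — each a summation over an arbitrary negative shift — into the nested, strictly decreasing chain of values recorded on the left-hand side.
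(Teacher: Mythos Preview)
Your proof is correct, and it takes a somewhat different route from the paper's. The paper argues structurally: it first observes the permutation equivariance $M_{u\sigma}[f_1,\dots,f_n]=M_u[f_{\sigma(1)},\dots,f_{\sigma(n)}]$ to reduce to nondecreasing packed words, then uses the base case $M_{1^k}[f_1,\dots,f_k]=M[f_1\cdots f_k]$ together with the recursive operator identity
\[
M\bigl[M_u[f_1,\dots,f_n]\,f_{n+1}\cdots f_{n+k}\bigr]=M_{u(m+1)^k}[f_1,\dots,f_{n+k}]
\]
to build up the general nondecreasing word one block at a time. You instead unfold both sides into explicit sums over tuples $v_1<\dots<v_m<0$ of negative integers and match them directly; this bypasses the reduction to nondecreasing words because your block-wise rewriting of the packing condition already absorbs any permutation of the letters. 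Your approach is more self-contained and makes the combinatorics of $\pack$ completely explicit, while the paper's version isolates the inductive identity \eqref{addmax} as a reusable lemma (it is invoked again later, in the proof that the $M_u$ span a suboperad). One cosmetic slip: in your last sentence the chain $v_1<\dots<v_m<0$ is increasing, not decreasing.
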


\Proof If $v$ is a packed word of length $n$
and $v=u\sigma$ for a permutation $\sigma\in\SG_n$, then
\begin{equation}
M_v[f_1,\dots,f_n]=M_u[f_{\sigma(1)},\dots f_{\sigma(n)}],
\end{equation}
so that it is sufficient to prove the property for nondecreasing
packed words. Now,
\begin{equation}
M_{1^k}[f_1,\dots,f_k]=M[f_1f_2\dots f_k],
\end{equation}
and if $m$ is the maximum letter of a packed word $u$ of length $n$,
then
\begin{equation}\label{addmax}
M[ M_u[f_1,\dots,f_n]f_{n+1}\dots f_{n+k}]=
M_{u(m+1)^k}[f_1,\dots,f_{n+k}]\,.
\end{equation}
\qed

For example,
\begin{eqnarray}
M_{21}[f_1,f_2] &=&   M[M[f_2]f_1]\,,\\
M_{132}[f_1,f_2,f_3] &=& M[f_2 M [f_3 M[f_1]]]\,,\\
M_{3121}[f_1,f_2,f_3,f_4] &=& M[f_1 M[f_3 M[f_2f_4]]]\,.
\end{eqnarray}

%%%%%%%%%%%%%%%%%%%%%%%%%%%%%%%%%%%%%%%%%%%%%%%%%%%%%%%%%%%%%%%%%%%%%%%%%%%%%%%
\subsection{Operadic considerations}

\subsubsection{Partial compositions}

The embedding of $\FQSym$ in the operad of rational moulds
allowed the identification of various suboperads \cite{CHNT},
in particular the dendriform operad. We shall see that the
embedding of $\WQSym$ yields similar results for the tridendriform operad.

To define an operad structure, we need partial compositions $\circ_k$.
Their definition is transparent on the operators $M_u$. Let $\Delta$ be the
finite difference operator
\begin{equation}
\Delta f(t) =f(t+1)-f(t)\,.
\end{equation}
Note that $\Delta$ is a left inverse for $M$:
\begin{equation}
\Delta M[f](t)=f(t)\,.
\end{equation}
Then, the $k$th partial composition $M_u\circ_k M_v$, with $u$ of length
$m$ and $v$ of length $n$, is defined by
\begin{equation}\label{compM}
\begin{split}
M_u\circ_k M_v[f_1,\dots,f_{m+n-1}]
=M_u[f_1,\dots,f_{k-1},\Delta M_v[f_k,\dots,f_{k+n-1}],\\
f_{k+n},\dots,f_{m+n-1}]\,.
\end{split}
\end{equation}
In terms of the associated rational functions, this reads
\begin{equation}\label{compRat}
\begin{split}
M_u\circ_k M_v(Z)
=&
 (z_kz_{k+1}\dots z_{k+n-1}-1)\\
& \times M_u(z_1,\dots,z_{k-1},z_kz_{k+1}\dots z_{k+n-1},z_{k+n},\dots z_{m+n-1})\\
& \times M_v(z_k,\dots,z_{k+n-1})\,.
\end{split}
\end{equation}
Indeed,
\begin{equation}
\begin{split}
\Delta M_v[f_k,\dots,f_{k+n-1}](t)
=&\int_{\RR^n}\hat f_k(\nu_k)\dots\hat f_{k+n-1}(\nu_{k+n-1})
      (e^{2\pi i(\nu_k+\dots+\nu_{k+n-1})}-1)\\
&\quad\quad \times e^{2\pi i\nu_k t}\dots e^{2\pi i\nu_{k+n-1} t}
d\nu_k\dots d\nu_{k+n-1}
\end{split}
\end{equation}
so that its Fourier transform is
\begin{equation}
\begin{split}
(e^{2\pi i\nu}-1)\int_{\RR^n}
\delta(\nu-\nu_k-\dots-\nu_{k+n-1})
\hat f_k(\nu_k)\dots\hat f_{k+n-1}(\nu_{k+n-1})\\
\quad\quad \times e^{2\pi i\nu_k t}\dots e^{2\pi i\nu_{k+n-1} t}
d\nu_k\dots d\nu_{k+n-1}\,.
\end{split}
\end{equation}
Plugging this expression into (\ref{compM}), we obtain (\ref{compRat}).

%%%
On this version, it is clear that $M_u$ and $M_v$ can be replaced by arbitrary
rational functions, and it is easy to check that the axioms of an operad are
satisfied. This is an analogue of the operad $\Mould$ of \cite{Cha,CHNT} which
will be denoted here by $\Mould^0$.

\begin{theorem}
(i) The space of rational functions in $Z$ endowed with the partial
compositions $\circ_k$ above acquires the structure of an operad, which will
be denoted by $\Mould^1$. \\
(ii) It is isomorphic to the operad $\lambda$-${\bf RatFct}$ defined by
Loday~\cite{Lod10}, for $\lambda=1$.\\
(iii) The $M_u$ span a suboperad of $\Mould^1$.
\end{theorem}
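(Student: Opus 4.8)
The plan is to handle the three parts in the order (i), (ii), (iii), moving freely between the operator form~\eqref{compM} and its rational counterpart~\eqref{compRat}.

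For (i), I would verify the operad axioms directly from~\eqref{compRat}. The unit should be the arity-one function $M_1(Z)=(z_1-1)^{-1}$: taking $v=M_1$ (so $n=1$) in~\eqref{compRat} gives $f\circ_i M_1=(z_i-1)\,f(Z)\,(z_i-1)^{-1}=f$, while taking $u=M_1$ and $k=1$ gives $M_1\circ_1 g=(z_1\cdots z_n-1)\,(z_1\cdots z_n-1)^{-1}\,g=g$, the prefactor cancelling in both cases against the single pole of $M_1$. The two associativity relations, the nested one $(f\circ_i g)\circ_{i+j-1}h=f\circ_i(g\circ_j h)$ and the disjoint one, then reduce to the bookkeeping fact that grouping the variables into successive products is associative and that the prefactors $(\prod z-1)$ telescope consistently on both sides. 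I expect this to be routine but tedious; the only delicate point is the index shift forced by the convention that the slots produced by the inner function occupy consecutive positions.

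For (ii), I would recall Loday's definition of $\lambda$-${\bf RatFct}$ from~\cite{Lod10} and exhibit the comparison map, namely the identity on rational functions up to the harmless rescaling that normalises Loday's parameter. The content is that his partial composition, specialised at $\lambda=1$, is exactly~\eqref{compRat}, so once conventions are aligned the identification is essentially definitional. Since $\lambda$-${\bf RatFct}$ is already known to be an operad, this simultaneously yields an independent proof of (i); I would present the direct check of the axioms and this identification as two views of the same fact, and flag the alignment of conventions (the additive variables customary in this circle of ideas versus the multiplicative $z_i$ used here) as the one genuinely fiddly ingredient.

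For (iii), the task is closure of $\mathrm{span}\{M_u\}$ under $\circ_k$, and here I would work with the operator form~\eqref{compM}. Writing $M_v$ through Proposition~\ref{decM} as a nested string of operators $M$ and products, and using $\Delta M=\mathrm{id}$, the factor $\Delta M_v$ occurring in~\eqref{compM} sheds its outermost $M$ and becomes a product $M_{v^-}[\dots]\cdot(\text{monomial in the last block of }v)$, where $v^-$ is $v$ with its last block deleted. Substituting this product into the $k$-th slot of $M_u$ and repeatedly applying the Rota--Baxter relation~\eqref{RBM}, which rewrites a product of two $M$-images as a single $M$-image of a sum, reduces $M_u\circ_k M_v$ to a linear combination of nested-$M$ operators, each of which is some $M_w$ by~\eqref{eqdecM}. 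This is precisely the mechanism by which a composition such as $M_{21}\circ_1 M_{21}$ splits into several $M_w$. I expect the main obstacle to be exactly the combinatorial control of this reduction: one must check that the Rota--Baxter rewriting terminates and that the resulting words $w$ are those prescribed by the operadic insertion of the set composition of $v$ into that of $u$. This is where the real work lies, and where the eventual identification with the tridendriform operad should make the bookkeeping transparent.
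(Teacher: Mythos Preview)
Your plan is correct and follows the paper's route closely. For (ii), the paper makes your ``alignment of conventions'' concrete: the isomorphism is $P\mapsto P'$ with $P'(x_1,\dots,x_n)=\bigl(\prod_i(x_i+1)-1\bigr)P(x_1+1,\dots,x_n+1)$, i.e.\ the substitution $z_i=x_i+1$ together with the prefactor matching Loday's $\theta^1$; you should state this explicitly rather than leave it as a convention check. For (iii), the paper follows your outline (strip the outer $M$ from $M_v$ via $\Delta M=\mathrm{id}$, then substitute into the nested form \eqref{eqdecM} of $M_u$), but instead of iterating the Rota--Baxter identity it uses the already-established product rule for the $M_u$ (Lemma~\ref{lemphi}, visible in the worked example $M_{12}\circ_2 M_{12}$ immediately after the proof) to expand the resulting product of $[\dots b_{l-1}]$ with $M_{v'}$ as a sum of $M_w$'s in one stroke, after which \eqref{addmax} applies once per remaining block of $u$. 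This packages the combinatorics into the quasi-shuffle and removes the termination and bookkeeping worries you flag; your Rota--Baxter iteration would of course reprove that product rule, but there is no need to redo it.
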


\Proof 
(i) 
This can be checked directly. However, it follows from (ii).

\noindent (ii) Loday's rule for the composition in  $1$-${\bf RatFct}$
is
\begin{equation}\label{compLod}
\begin{split}
P\circ'_k Q(x_1,\dots x_{m+n-1})
= &P(x_1,\dots,x_{k-1},\theta^1(x_k,\dots,x_{k+n-1}),x_{k+n},\dots,x_{m+n-1})\\
&\quad\quad \times Q(x_k,\dots,x_{k+n-1})\,
\end{split}
\end{equation}
with 
\begin{equation}
\theta^1(x_1,\dots, x_n) = \prod_{i=1}^n(x_i+1) -1\,.
\end{equation}
Set
\begin{equation}
P'(x_1,\dots,x_n) := \left(\prod_{i=1}^nn(x_i+1) -1\right) P(x_1+1,\dots,x_n+1).
\end{equation}
Then
\begin{equation}
P\circ_i Q = \sum_R R \Longleftrightarrow P'\circ'_i Q' = \sum_R R'.
\end{equation}

%
%Setting $z_i=x_i+1$ and defining
%\begin{eqnarray}
%\tilde P(z_1,\dots,z_n)= &=& P(z_1-1,\dots,z_n-1)\\
%P^*(z_1,\dots,z_n) &=& (z_1\dots z_n -1)P(z_1,\dots,z_n)
%\end{eqnarray}
%we see that (\ref{compLod}) applied to $\tilde P^*\circ_k \tilde Q^*$
%gives exactly (\ref{compRat}). 

\noindent (iii) By Proposition \ref{decM}, 
\begin{equation}
M_v[f_k,\dots,f_{k+n-1}] = M[F]
\end{equation}
where $F$ is the  product of a term $M_{v'}[f_{i_1},\dots]$ with
some $f_j$. Precisely, $v'$ is obtained from $v$ by erasing its
maximal letter $m$, the $f_j$ are the elements of the last block
$B_m$ of $\Pi(v)$, and the arguments of $M_{v'}$ are the remaining
$f_i$, in their natural order. Hence,
\begin{equation}
\Delta M_v[f_k,\dots,f_{k+n-1}] = F =
M_{v'}[f_{i_1},\dots,f_{i_r}]f_{j_1}\dots f_{j_s}\,.
\end{equation}
Plugging this expression into the decomposition \eqref{eqdecM} of $M_u$,
and applying \eqref{addmax},
we obtain $M_u\circ_k M_v$ as a multiplicity-free sum of terms $M_w$.
\qed

For example,
\begin{equation}
\begin{split}
M_{12}\circ_2 M_{12}[f_1,f_2,f_3] &= M_{12}[f_1,\Delta M_{12}[f_2,f_3]]
                     = M[M[f_1] M[f_2] f_3]\\
&\quad\text{(since $M_{12}[f_2,f_3]=M[M[f_2]f_3]$)}\\
& = M[ M_{12}[f_1,f_2]f_3 +  M_{21}[f_1,f_2]f_3 + M_{11}[f_1,f_2]f_3]\\
& = (M_{123}+M_{213}+M_{112})[f_1,f_2,f_3]\,.
\end{split}
\end{equation}

Similarly, one can check that
\begin{eqnarray}
M_{121}\circ_1 M_{12} &=& M_{1232},\\
M_{121} \circ_2 M_{12}&=& M_{1121}+ M_{1231}+ M_{2132},\\
M_{121}\circ_3 M_{12} &=& M_{2312},
\end{eqnarray}
and
\begin{eqnarray}
M_{123} \circ_1 M_{12} &=& M_{1234},   \\
M_{123} \circ_2 M_{12} &=& M_{1123}+ M_{1234}+ M_{2134},\\
M_{123} \circ_3 M_{12} &=& M_{1213}+ M_{1223}+ M_{1234}+ M_{1324}+M_{2314}\,.
\end{eqnarray}

%%%%%%%%%%%%%%%%%%%%%%%%%%%%%%%%%%%%%%%%%%%%%%%%%%%%%%%%%%%%%%%%%%%%%%%%%%%%%%%
\subsubsection{Suboperads of $\Mould^1$}

A \emph{dendriform trialgebra} \cite{LRtri}
(or tridendriform algebra)  is an associative algebra whose multiplication
$\odot$ splits into three pieces
\begin{equation}
x\odot y = x\gautrid y + x\miltrid y + x\droittrid y\,,
\end{equation}
where $\miltrid$ is associative, and
\begin{eqnarray}
(x\gautrid y)\gautrid z = x\gautrid (y\odot z)\,,\\
(x\droittrid y)\gautrid z = x\droittrid (y\gautrid z)\,,\\
(x\odot y)\droittrid z = x\droittrid (y\droittrid z)\,,\\
(x\droittrid y)\miltrid z = x\droittrid (y\miltrid z)\,,\\
(x\gautrid y)\miltrid z = x\miltrid (y\droittrid z)\,,\\
(x\miltrid y)\gautrid z = x\miltrid (y\gautrid z)\,.
\end{eqnarray}
The free dendriform trialgebra on one genarator is known
to be based on 
\emph{reduced plane trees}, {\it i.e.},  plane rooted trees in which
each vertex which is not a leaf has at least two children.
These trees are counted by the little Schr\"oder numbers
\cite{LRtri}. 
This algebra is naturally embedded in $\WQSym$.
Indeed, $\WQSym$ is tridendriform,
the partial products being given by
\begin{equation}
\M_{w'} \gautrid \M_{w''} =
\sum_{w=u.v\in w'\convW w'', |u|=|w'| ; \max(v)<\max(u)}
\M_\park,
\end{equation}
\begin{equation}
\M_{w'} \miltrid \M_{w''} =
\sum_{w=u.v\in w'\convW w'', |u|=|w'| ; \max(v)=\max(u)}
\M_\park,
\end{equation}
\begin{equation}
\M_{w'} \droittrid \M_{w''} =
\sum_{w=u.v\in w'\convW w'', |u|=|w'| ; \max(v)>\max(u)}
\M_\park,
\end{equation}

\begin{lemma}\label{optrid}
On the operators $M_u$, these operations translate as
\begin{eqnarray}
(M_u\droittrid M_v)[f_1,\dots,f_{n+m}] &=& 
M[M_u[f_1,\dots,f_n]\Delta M_v[f_{n+1},\dots,f_{m+n}]],\\
(M_u\gautrid M_v)[f_1,\dots,f_{n+m}] &=& 
M[\Delta M_u[f_1,\dots,f_n]M_v[f_{n+1},\dots,f_{m+n}]],\\
(M_u\miltrid M_v)[f_1,\dots,f_{n+m}] &=& 
M[\Delta M_u[f_1,\dots,f_n]\Delta M_v[f_{n+1},\dots,f_{m+n}]],
\end{eqnarray}
\end{lemma}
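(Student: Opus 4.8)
The plan is to argue directly from the shift-operator realization of the $M_u$ rather than through the rational functions. Recall that, expanding the moulds as Laurent series near infinity, one has $M_u[g_1,\dots,g_p](t)=\sum_{\alpha}g_1(t+\alpha_1)\cdots g_p(t+\alpha_p)$, the sum running over $\alpha\in\ZZ_{<0}^{\,p}$ with $\pack(\alpha)=u$, packing being taken with respect to the natural order on the negative integers. The single computation on which everything rests is the effect of $\Delta$. Writing $M_v[g](t+1)$ and shifting every exponent by $+1$ turns the condition $\alpha\in\ZZ_{<0}^{\,p}$ into $\alpha\in\ZZ_{\le 0}^{\,p}$ while keeping $\pack(\alpha)=v$; subtracting $M_v[g](t)$ then leaves exactly the terms having a zero coordinate, and since all coordinates are $\le 0$ this is the same as requiring $\max(\alpha)=0$, so that
\begin{equation}
\Delta M_v[g_1,\dots,g_p](t)=\sum_{\substack{\alpha\in\ZZ_{\le 0}^{\,p}\\ \pack(\alpha)=v,\ \max(\alpha)=0}} g_1(t+\alpha_1)\cdots g_p(t+\alpha_p).
\end{equation}
In words, $\Delta$ relaxes the strict inequality to a weak one and forces the maximal coordinate(s) to vanish.

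Next I would substitute this into the three right-hand sides, treating $\droittrid$ first. Using $M[h](t)=\sum_{k\ge 1}h(t-k)$ together with the realizations of $M_u$ and of $\Delta M_v$ above, the expression $M[M_u[f_1,\dots,f_n]\,\Delta M_v[f_{n+1},\dots,f_{n+m}]](t)$ becomes a triple sum over $k\ge 1$, over $\alpha\in\ZZ_{<0}^{\,n}$ with $\pack(\alpha)=u$, and over $\beta\in\ZZ_{\le 0}^{\,m}$ with $\pack(\beta)=v$ and $\max(\beta)=0$. I would then set $\gamma=(\alpha-k,\beta-k)$. The second block then has maximum $-k$, while every coordinate of the first block is at most $-1-k<-k$; conversely, given $\gamma\in\ZZ_{<0}^{\,n+m}$ with $\pack(\gamma_{[1,n]})=u$, $\pack(\gamma_{[n+1,n+m]})=v$ and $\max(\gamma_{[1,n]})<\max(\gamma_{[n+1,n+m]})$, one recovers $k=-\max(\gamma_{[n+1,n+m]})$, $\alpha=\gamma_{[1,n]}+k$ and $\beta=\gamma_{[n+1,n+m]}+k$. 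This is a bijection, so the triple sum collapses to $\sum_{\gamma}\prod_{\ell}f_\ell(t+\gamma_\ell)$ over that index set.

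Finally I would group the $\gamma$'s by their packing $w=\pack(\gamma)$, which factors as $w=p\cdot q$ with $|p|=n$. By definition $\sum_{\pack(\gamma)=w}\prod_\ell f_\ell(t+\gamma_\ell)=M_w[f_1,\dots,f_{n+m}](t)$, and since packing preserves the order of values, the constraints become $\pack(p)=u$, $\pack(q)=v$ and $\max(p)<\max(q)$; comparing with the stated $\WQSym$ formula, this is exactly $(M_u\droittrid M_v)$. The cases $\gautrid$ and $\miltrid$ run identically: applying $\Delta$ to the left factor (resp. to both) forces the global maximum $-k$ to sit in the first block (resp. in both blocks), which translates into $\max(p)>\max(q)$ (resp. $\max(p)=\max(q)$); since every packed word $w=p\cdot q$ falls into exactly one of the three mutually exclusive cases $\max(p)<\max(q)$, $\max(p)=\max(q)$, $\max(p)>\max(q)$, the three identities together merely refine the product $M_u\star M_v$. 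The one delicate point, and the step I would write out most carefully, is the bookkeeping in the change of variables $\gamma=(\alpha-k,\beta-k)$: one must verify that it is a genuine bijection onto the prescribed index set and that the comparison of the two block-maxima as integers coincides with the comparison of $\max(p)$ and $\max(q)$ as letters of $w$.
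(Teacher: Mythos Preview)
Your argument is correct. The paper's own proof consists of the two words ``Direct verification'', so you have in fact supplied what the paper omits. Your route---expanding $M_u$ as the sum over $\alpha\in\ZZ_{<0}^{\,p}$ with $\pack(\alpha)=u$, computing $\Delta M_v$ as the subsum with $\max(\alpha)=0$, and then performing the change of variables $\gamma=(\alpha-k,\beta-k)$---is precisely the kind of bare-hands check one would expect here, and every step goes through. The point you flag as delicate (that the comparison $\max(\gamma_{[1,n]})\lessgtr\max(\gamma_{[n+1,n+m]})$ survives packing and becomes $\max(p)\lessgtr\max(q)$) is indeed the only place where something could go wrong, and it is fine: packing is a strictly order-preserving relabeling of the alphabet, so it commutes with taking the maximum over any subset of positions, and it sends equal letters to equal letters; hence the trichotomy $<,=,>$ is preserved exactly.

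One small expository remark: rather than arguing via the series expansion, one could also verify the lemma at the level of rational moulds. In that language $M$ acts as multiplication by $(z_1\cdots z_N-1)^{-1}$ and $\Delta$ as multiplication by $(z_1\cdots z_N-1)$ (this is how the partial compositions \eqref{compRat} were set up), so for instance the $\droittrid$ identity becomes the rational-function identity
\[
\frac{M_u(z_1,\dots,z_n)\,(z_{n+1}\cdots z_{n+m}-1)\,M_v(z_{n+1},\dots,z_{n+m})}{z_1\cdots z_{n+m}-1}
=\sum_{w}M_w(Z),
\]
the sum being over the $\droittrid$-part of $u\convW v$. This is equivalent to what you did but hides the combinatorics inside partial fractions; your series argument is more transparent and arguably closer to what ``direct verification'' means here.
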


\Proof Direct verification. \qed

With each reduced plane tree $T$, one can associate an element  $\MM_T$ of
$\WQSym$, defined by means of a map $\TT$ from words to trees \cite{NTtri}.
From each word
$w$ of length $n$, we build a plane tree $\TT(w)$ recursively defined
as follows. If $m=\max(w)$ and $w$ has exactly $k$ occurrences of $m$,
$\TT(w)$ is obtained from the factorization 
\begin{equation}\label{fact}
w=v_0\,m\,v_1\,m\,v_2\dots v_{k-1}\,m\,v_k
\end{equation}
by grafting
$\TT(v_0),\TT(v_1),\dots,\TT(v_k)$ (in this order) on a common root.
We then set
\begin{equation}\label{MT}
\MM_T = \sum_{\TT(w)=T}w
      = \sum_{\TT(u)=T}\M_u  \,.
\end{equation}
One can show \cite{NTtri} that these polynomials span a Hopf subalgebra of
$\WQSym$,
which is precisely the free algebra on one generator for the tridendriform operad.
This generalizes the embedding in $\FQSym$ of the free  algebra on one
generator for the dendriform operad, which is based on planar binary
trees~\cite{LR1,HNT-pbt}.

As in the case of binary trees and $\FQSym$, the sum of the $M_u$ for all $u$
having a given Schr\"oder tree $\TT(u)=T$ is extremely simple:

\begin{theorem}
The mould associated with a plane tree is
\begin{equation}
\MM_T(Z)=\sum_{\TT(u)=T}M_u(Z)=
\prod_{\bullet\in T}\frac1{H_\bullet(Z)}
\end{equation}
where $\bullet$ runs over the internal nodes of $T$,
and 
\begin{equation}
H_\bullet(z)=\left(\prod_{z\in V(T_\bullet)}z-1\right)
\end{equation}
where $T_\bullet$ is the subtree with root $\bullet$ and $V(T_\bullet)$ the
set of the variables labeling its sectors.
\end{theorem}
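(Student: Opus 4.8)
The plan is to prove the identity by induction on the reduced plane tree $T$, following the recursive definition \eqref{fact} of $\TT$ and the definition \eqref{MT} of $\MM_T$. Write $T$ as a common root $r$ carrying the subtrees $T_0,T_1,\dots,T_k$ in left-to-right order. Since $\TT(v_j)=T_j$ forces $|v_j|$ to equal the number of sectors $n_j$ of $T_j$, the positions occupied by the $k$ copies of the maximal letter and by each subword $v_j$ are entirely determined by $T$: the sectors at $r$ are exactly the $k$ positions of the maximal letter, while the sectors of $T_j$ form a contiguous range $S_j$, with $S_0<S_1<\cdots<S_k$ and the ranges pairwise disjoint. In particular $V(T_r)$ is the whole variable set, so $H_r(Z)=z_1\cdots z_n-1$, whereas for an internal node $\bullet$ of $T_j$ the set $V(T_\bullet)$ involves only variables indexed by $S_j$.

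The first step is to factor the mould of a single word. If $\Pi(u)=(B_1,\dots,B_m)$ with $m=\max(u)$, the last factor of $M_u(Z)$ is $z^{B_1\cup\cdots\cup B_m}-1=z_1\cdots z_n-1$, and the variables indexed by $B_m$ (the root sectors) occur in no other factor, since $B_1\cup\cdots\cup B_i\subseteq[n]\setminus B_m$ for $i<m$. Hence
\begin{equation}
M_u(Z)=\frac{1}{z_1\cdots z_n-1}\,P_u(Z),\qquad
P_u(Z)=\prod_{i=1}^{m-1}\bigl(z^{B_1\cup\cdots\cup B_i}-1\bigr)^{-1}.
\end{equation}
Deleting the maximal letters from $u$ yields a packed word $w=v_0v_1\cdots v_k$ (a genuine concatenation, since the $S_j$ are increasing) whose set composition is precisely $(B_1,\dots,B_{m-1})$, so $P_u(Z)=M_w(Z)$. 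The map $u\mapsto w$ is a bijection of $\{u:\TT(u)=T\}$ onto the packed words $w=v_0\cdots v_k$ with $\TT(v_j)=T_j$, its inverse inserting $k$ copies of $\max(w)+1$ at the prescribed slots. This reduces the claim to
\begin{equation}
\MM_T(Z)=\frac{1}{z_1\cdots z_n-1}\sum_{w}M_w(Z).
\end{equation}

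The heart of the argument is to evaluate this sum by the convolution product. Grouping the $w$ by the profile $u_j=\pack(v_j)$, which automatically satisfies $\TT(u_j)=T_j$, the inner family is exactly the iterated convolution $u_0\convW\cdots\convW u_k$. Since $\phi:\M_u\mapsto M_u(Z)$ is an algebra morphism for the $\star$ product (Lemma~\ref{lemphi}), and the blocks $S_0<\cdots<S_k$ become consecutive after an order-preserving relabelling of the gapped index set $S_0\cup\cdots\cup S_k$, the relation $\M_{u'}\M_{u''}=\sum_{w\in u'\convW u''}\M_w$ gives, iterated,
\begin{equation}
\sum_{w\in u_0\convW\cdots\convW u_k}M_w(Z)=\prod_{j=0}^{k}M_{u_j}(Z^{(j)}),
\end{equation}
with $Z^{(j)}$ the variables indexed by $S_j$. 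Summing over all admissible profiles, and using that the $Z^{(j)}$ are disjoint so the sum of products factors into a product of sums, yields $\sum_w M_w(Z)=\prod_{j=0}^k\MM_{T_j}(Z^{(j)})$. The induction hypothesis $\MM_{T_j}(Z^{(j)})=\prod_{\bullet\in T_j}H_\bullet(Z)^{-1}$ together with the root factor $(z_1\cdots z_n-1)^{-1}=H_r(Z)^{-1}$ then combine to the stated product over all internal nodes of $T$; the base case is the one-node tree $\TT(1)$, where $\MM=M_1=(z_1-1)^{-1}$.

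The step I expect to demand the most care is the passage through the convolution product: one must verify that deleting the maximal letters really turns ``grafting the $T_j$ on a common root'' into the plain concatenation $v_0\cdots v_k$, so that it is the convolution $u_0\convW\cdots\convW u_k$ that appears and not some interleaving mixing in the root sectors, and that the $\star$-morphism of Lemma~\ref{lemphi}, stated for consecutive variable blocks, indeed applies after relabelling the gapped index set. Everything else is bookkeeping on sectors and a routine induction.
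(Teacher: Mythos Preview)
Your argument is correct and arrives at the same inductive recursion as the paper, namely
\[
\MM_T(Z)=\frac{1}{z_1\cdots z_n-1}\,\prod_{j}\MM_{T_j}(Z^{(j)}),
\]
but by a genuinely different route. The paper invokes the tridendriform identity
\[
\MM_T=(\MM_{T_1}\droittrid\M_1)\miltrid\cdots\miltrid(\MM_{T_{k-1}}\droittrid\M_1)\gautrid\MM_{T_k}
\]
from \cite{NTpark} and then applies Lemma~\ref{optrid} (the operator translation of $\droittrid,\miltrid,\gautrid$ in terms of $M$ and $\Delta$); since $\Delta M=\mathrm{id}$, this collapses to $\MM_T[\cdots]=M[\,\MM_{T_1}[\cdots]\,f_\ast\,\cdots\,f_\ast\,\MM_{T_k}[\cdots]\,]$, and the mould is read off directly. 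You instead work purely with the rational functions $M_u(Z)$: you strip the last factor $(z_1\cdots z_n-1)^{-1}$, observe that what remains is $M_w$ for the word $w$ obtained by deleting the maximal letters, and then use the $\star$-morphism of Lemma~\ref{lemphi} to identify the resulting sum with the product of the $\MM_{T_j}$. The advantage of your approach is that it is self-contained within the paper---it needs neither the tridendriform machinery nor the external reference---at the cost of the bookkeeping with the order-preserving relabeling of the gapped index set $S_0\cup\cdots\cup S_k$, which you correctly flag as the place requiring care. The paper's proof is terser because the combinatorics of grafting is packaged into the cited tridendriform formula.
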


\Proof 
In terms of the tridendriform operations, $\MM_T$ is given in~\cite{NTpark}
as
\begin{equation}
\MM_T = (\MM_{T_1}\droittrid\M_{1}) \miltrid (\MM_{T_2}\droittrid\M_{1})
\miltrid \dots \miltrid (\MM_{T_{k-1}}\droittrid\M_{1}) \gautrid \M_{T_k}
\end{equation}
if $T$
has as subtrees of its root $T_1,\dots,T_k$ in this order.
The result follows then from Lemma \ref{optrid}.
\qed

For example, the mould associated with the tree 
\begin{equation}
\vcenter{\xymatrix@C=0.5mm@R=4mm{
*{} & *{} & *{} & *{} & {}\ar@{-}[dlll]\ar@{-}[d]\ar@{-}[drrrr] \\
*{} &  {}\ar@{-}[dl]\ar@{-}[dr] & *{}
& *{z_2} & {}\ar@{-}[dl]\ar@{-}[dr] & *{}
& *{z_4} & *{} & {}\ar@{-}[dll]\ar@{-}[d]\ar@{-}[drr] \\
 {}
& *{z_1} &  {} &  {}
& *{z_3} &  {} &  {}
& *{z_5} &  {}
& *{z_6} &  {} \\
}}
\end{equation}
is
\begin{equation}\label{moultree}
\frac1{(z_1-1)(z_3-1)(z_5z_6-1)(z_1z_2z_3z_4z_5z_6-1) }.
\end{equation}
Indeed, the tridendriform expression for $\MM_T$ is
\begin{equation}
(M_1\droittrid M_1) \miltrid (M_1\droittrid M_1)\gautrid M_1
\end{equation}
and the associated operator is therefore
\begin{equation}
\MM_T[f_1,f_2,f_3,f_4,f_5,f_6]=
M\left[
M[f_1]f_2 M[f_3] f_4 M[f_5f_6]
\right]
\end{equation}
whose rational mould is indeed given by \eqref{moultree}.

%%%%%%%%%%%%%%%%%%%%%%%%%%%%%%%%%%%%%%%%%%%%%%%%%%%%%%%%%%%%%%%%%%%%%%%%%%%%%%%
\subsection{Characters}

Evaluating the $M_u$ on a fixed function $f$ yields a character of $\WQSym$.
For example, the natural character of $\WQSym$ \cite{HNT}
\begin{equation}
\chi(\M_u) = \binom{t}{\max(u)}
\end{equation}
is obtained by choosing for $f$ the characteristic function
of $[0,\infty)$:
\begin{equation}
M_u[\fc_{\R_+}](t) = \binom{t}{\max(u)}.
\end{equation}
Another more trivial character is obtained by choosing
$f(t)=q^t$. The result is simply $q^{(n+1)t}M_u(z_1=q,\dots,z_m=q)$.
Combining both gives
\begin{equation}
\M_u \mapsto M_I\left(\frac{1-q^t}{1-q} \right),
\end{equation}
the specialization of the monomial quasi-symmetric function
to the $q$-integer alphabet $[t]_q$, 
(when $t$ is an integer) as defined in \cite[Eq. (169)]{NCSF2}.

%%%%%%%%%%%%%%%%%%%%%%%%%%%%%%%%%%%%%%%%%%%%%%%%%%%%%%%%%%%%%%%%%%%%%%%%%%%%%%%
%%%%%%%%%%%%%%%%%%%%%%%%%%%%%%%%%%%%%%%%%%%%%%%%%%%%%%%%%%%%%%%%%%%%%%%%%%%%%%%
%%%%%%%%%%%%%%%%%%%%%%%%%%%%%%%%%%%%%%%%%%%%%%%%%%%%%%%%%%%%%%%%%%%%%%%%%%%%%%%
\section{Matrix quasi-symmetric functions ($\MQSym$)}
\label{sec:MQSym}

%%%%%%%%%%%%%%%%%%%%%%%%%%%%%%%%%%%%%%%%%%%%%%%%%%%%%%%%%%%%%%%%%%%%%%%%%%%%%%%
\subsection{Multiwords and packed matrices}

Almost all known combinatorial Hopf algebras arise as quotients or subalgebras
of an algebra based on \emph{packed integer matrices}, {\it i.e.}, matrices of
nonnegative integers (of arbitrary size) without null rows or columns.
This is $\MQSym$, the Hopf algebra of matrix quasi-symmetric functions,
introduced in \cite{NCSF6}. We shall present here only
the required background and refer the reader
to~\cite{NCSF6} for more details.

Matrices of nonnegative integers are in bijection with ordered partitions
of multisets, that is, sequences of sets which may contain several occurrences
of the same element. Indeed, given a matrix $M$, the coefficient $M_{ij}$ is
the number of occurrences of the letter $j$ in the $i$-th part $P_i$ of an
ordered partition $P$.
The condition that no row is empty translates as no multiset of the sequence
is empty, and the condition that no column is empty that the union of all
multisets is packed (compare with the definition of packed words).
For example,
\begin{equation}
\left(\begin{array}{cccc}
       2 & 1 & 0 & 0\\
       1 & 0 & 0 & 0\\
       0 & 0 & 3 & 1
\end{array}\right) 
\longleftrightarrow
\{1, 1, 2\}\{1\}\{3, 3, 3, 4\}\,.
\end{equation}

Let $\tmmathbf{A} =\tmmathbf{A}' \cdot A$
and $\tmmathbf{B} =\tmmathbf{B}' \cdot B$ be two multiset partitions
with last parts $A$ and $B$.
In order to define their product, as in the case of $\WQSym$, we shall first
shift the values in $\tmmathbf{B}$ by the maximum value $m$
of $\tmmathbf{A}$.
Let us denote this shifted set by $\tmmathbf{B}[m]$.
Then their product
$\pi(\tmmathbf{A},\tmmathbf{B})=\rho(\tmmathbf{A},\tmmathbf{B}[m])$
is defined by
\begin{equation} 
\begin{split}
\rho (\tmmathbf{A}, \tmmathbf{B})
& =  \rho (\tmmathbf{A}, \tmmathbf{B}') \cdot B
   + \rho (\tmmathbf{A}', \tmmathbf{B}) \cdot A\\
&\ + \rho (\tmmathbf{A}', \tmmathbf{B}') \cdot (A \cup B).
\end{split} 
\end{equation}

For example,
\begin{equation} 
\begin{split}
\pi(\{1, 1, 2\}\{1\}, \{1, 1, 1, 2\} )
&= \rho(\{1, 1, 2\}\{1\}, \{3, 3, 3, 4\}) \\
&=
\{1, 1, 2\}\{1\}\{3, 3, 3, 4\}+\{1, 1, 2\}\{3, 3, 3, 4\}\{1\}\\
&\ \ +\{3, 3, 3, 4\}\{1, 1, 2\}\{1\} +\{1, 1, 2, 3, 3, 3, 4\}\{1\}\\
&\ \ +\{1, 1, 2\}\{1, 3, 3, 3, 4\}.\\
\end{split}
\end{equation}

This is not the original definition of $\MQSym$ but this version is better
suited for our purpose. Moreover it is easy to see the embedding of $\WQSym$
into $\MQSym$: it is the linear span of the elements ${\tmmathbf{P}}$ such
that the standard partition $\tmmathbf{P}$ is an ordered partition of the
{\tmstrong{set}} (not multiset!) $[\max (\tmmathbf{P})]$. It is obviously a
subalgebra of $\MQSym$.

%%%%%%%%%%%%%%%%%%%%%%%%%%%%%%%%%%%%%%%%%%%%%%%%%%%%%%%%%%%%%%%%%%%%%%%%%%%%%%%
%%%%%%%%%%%%%%%%%%%%%%%%%%%%%%%%%%%%%%%%%%%%%%%%%%%%%%%%%%%%%%%%%%%%%%%%%%%%%%%
%%%%%%%%%%%%%%%%%%%%%%%%%%%%%%%%%%%%%%%%%%%%%%%%%%%%%%%%%%%%%%%%%%%%%%%%%%%%%%%
\section{Polyhedral cones associated with multiset compositions}
\label{sec:newcones}

%%%%%%%%%%%%%%%%%%%%%%%%%%%%%%%%%%%%%%%%%%%%%%%%%%%%%%%%%%%%%%%%%%%%%%%%%%%%%%%
\subsection{Main Results}

An alphabet $X =\{x_1 < \dots < x_p \}$ can be identified with the tuple of
coordinate functions $(x_1, \dots, x_p)$ of $\RR^p$.
Then, one can associate with a multiset $A$ on $X$ the sum of its elements,
{\it e.g.},
\begin{equation}
s_{\{x_1, x_1, x_3, x_4, x_4, x_4 \}} = 2 x_1 + x_3 + 3 x_4 .
\end{equation}
With this identification, one can associate with an ordered multiset partition
$\tmmathbf{A}(X) = A_1 (X) \dots A_r (X)$ 
($r = l (\tmmathbf{A})$) 
a subset
$K_{\tmmathbf{A}(X)}$ of $\RR^p$ defined by the inequalities 

\begin{equation} 
s_{A_1 (X) \dots A_i (X)} = \sum_{j = 1}^i s_{A_j (X)} = s_{A_1 (X) \cup
   \dots \cup A_i (X)} \geqslant 0 \hspace{1em} \tmop{for} \hspace{1em} i =
   1, \dots, r ,
\end{equation}
whose characteristic function is
\begin{equation} 
\tmmathbf{1}_{\tmmathbf{A}(X)} = \prod_{i = 1}^r \sigma_+ \left( s_{A_1
   (X) \cup \dots \cup A_i (X)} \right) ,
\end{equation}
where $\sigma_+ =\tmmathbf{1}_{\RR^+}$.

If we consider 
\begin{equation}
\mathcal{F}= \bigoplus_{n \geqslant 0} \mathcal{F}(\RR^n, \RR) 
= \bigoplus_{n \geqslant 0} \mathcal{F}_n
\end{equation}
as a graded algebra for the product 
$\pi(f,g)=f\star g$, {\it i.e.}, on
$\mathcal{F}_p \times \mathcal{F}_q$ 
\begin{equation} 
\pi (f, g) (x_1, \dots, x_{p + q}) = f (x_1, \dots, x_p) g (x_{p + 1},
   \dots, x_{p + q}) ,
\end{equation}
we have the following generalization of Theorem \ref{th1Ku}.

\begin{theorem}\label{thMQSym}
The linear map $\alpha$ defined from multiset compositions (of an
alphabet $X =\{x_1 < \dots < x_p \}$) to \ $\mathcal{F}(\RR^p,
\RR)$ by
\begin{equation}
\alpha (\tmmathbf{A}(X)) = (- 1)^{l (\tmmathbf{A})}
\tmmathbf{1}_{\tmmathbf{A}(X)}
\end{equation}
is such that
\begin{equation}\label{prodmul}
 \pi (\alpha (\tmmathbf{A}(X)), \alpha (\tmmathbf{B}(Y)))
 = \alpha (\pi (\tmmathbf{A}(X), \tmmathbf{B}(Y))).
\end{equation}
\end{theorem}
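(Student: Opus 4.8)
The plan is to prove \eqref{prodmul} by induction on $l(\tmmathbf{A})+l(\tmmathbf{B})$, using the recursive definition of the product $\rho$. Because the shift $\tmmathbf{B}\mapsto\tmmathbf{B}[m]$ is only a relabeling that places the coordinates of $Y$ after those of $X$ in $\RR^{p+q}$, it suffices to prove
\[
\alpha(\rho(\tmmathbf{A},\tmmathbf{B})) = (-1)^{l(\tmmathbf{A})+l(\tmmathbf{B})}\,\tmmathbf{1}_{\tmmathbf{A}}\star\tmmathbf{1}_{\tmmathbf{B}}
\]
for multiset compositions $\tmmathbf{A}$, $\tmmathbf{B}$ on disjoint alphabets, all characteristic functions being read as cylinders on the ambient $\RR^{p+q}$ (free in the absent coordinates). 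The base case where $\tmmathbf{B}$ is empty is immediate, since the empty composition maps to the constant function $1$ and $\rho(\tmmathbf{A},\emptyset)=\tmmathbf{A}$.

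First I would isolate two elementary bookkeeping facts. Writing $\tmmathbf{A}=\tmmathbf{A}'\cdot A$ with last part $A$, the cone $K_{\tmmathbf{A}}$ is obtained from $K_{\tmmathbf{A}'}$ by imposing the one extra inequality $S_A\ge 0$, where $S_A$ is the sum of \emph{all} elements of $\tmmathbf{A}$; hence $\tmmathbf{1}_{\tmmathbf{A}}=\tmmathbf{1}_{\tmmathbf{A}'}\,\sigma_+(S_A)$, and likewise $\tmmathbf{1}_{\tmmathbf{B}}=\tmmathbf{1}_{\tmmathbf{B}'}\,\sigma_+(S_B)$. Secondly, appending any last part $P$ to a composition $\tmmathbf{C}$ multiplies its characteristic function by $\sigma_+$ of the total sum and raises the length by one, so that $\alpha(\tmmathbf{C}\cdot P)=-\sigma_+(S)\,\alpha(\tmmathbf{C})$. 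The decisive point is that in all three terms of the recursion the appended composition partitions the same grand multiset, hence has the \emph{same} total sum $S=S_A+S_B$, so the identical factor $\sigma_+(S)$ appears throughout.

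Next I would apply $\alpha$ to $\rho(\tmmathbf{A},\tmmathbf{B}) = \rho(\tmmathbf{A},\tmmathbf{B}')\cdot B + \rho(\tmmathbf{A}',\tmmathbf{B})\cdot A + \rho(\tmmathbf{A}',\tmmathbf{B}')\cdot(A\cup B)$, feeding the inductive hypothesis into each summand. After tracking the signs — the first two terms contribute $(-1)^{l(\tmmathbf{A})+l(\tmmathbf{B})}$, while the third picks up an extra minus because $\rho(\tmmathbf{A}',\tmmathbf{B}')$ already carries $(-1)^{l(\tmmathbf{A})+l(\tmmathbf{B})-2}$ — one is left with
\[
\alpha(\rho(\tmmathbf{A},\tmmathbf{B})) = (-1)^{l(\tmmathbf{A})+l(\tmmathbf{B})}\,\sigma_+(S)\bigl[\tmmathbf{1}_{\tmmathbf{A}}\tmmathbf{1}_{\tmmathbf{B}'}+\tmmathbf{1}_{\tmmathbf{A}'}\tmmathbf{1}_{\tmmathbf{B}}-\tmmathbf{1}_{\tmmathbf{A}'}\tmmathbf{1}_{\tmmathbf{B}'}\bigr].
\]

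The whole argument then reduces to a single pointwise identity, which I expect to be the real (if short) crux. Substituting the factorizations $\tmmathbf{1}_{\tmmathbf{A}}=\tmmathbf{1}_{\tmmathbf{A}'}\sigma_+(S_A)$ and $\tmmathbf{1}_{\tmmathbf{B}}=\tmmathbf{1}_{\tmmathbf{B}'}\sigma_+(S_B)$ and cancelling the common factor $\tmmathbf{1}_{\tmmathbf{A}'}\tmmathbf{1}_{\tmmathbf{B}'}$, it remains to verify that for all real $a=S_A$ and $b=S_B$ (independent, as they involve disjoint coordinates)
\[
\sigma_+(a+b)\,\bigl[\sigma_+(a)+\sigma_+(b)-1\bigr]=\sigma_+(a)\,\sigma_+(b).
\]
A four-case inspection of the signs of $a$ and $b$ settles this immediately. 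This is precisely the characteristic-function shadow of the weight-one Rota--Baxter relation, matching the relation \eqref{RBM} obeyed by the operator $M$; this is the sense in which, as announced, the identity can be traced back to the Rota--Baxter structure discussed in Section~\ref{secRB}. Substituting back yields $\tmmathbf{1}_{\tmmathbf{A}}\tmmathbf{1}_{\tmmathbf{B}}$ and closes the induction.
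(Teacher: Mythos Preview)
Your proof is correct and follows essentially the same approach as the paper: an induction on $l(\tmmathbf{A})+l(\tmmathbf{B})$ via the recursive definition of $\rho$, reducing everything to the pointwise identity $\sigma_+(a)\sigma_+(b)=\sigma_+(a+b)\bigl[\sigma_+(a)+\sigma_+(b)-1\bigr]$, which is exactly the paper's key identity~\eqref{eqsimple}. The only cosmetic difference is the direction of the computation: the paper starts from $\pi(\alpha(\tmmathbf{A}),\alpha(\tmmathbf{B}))$, expands $\sigma_+(s_{\tmmathbf{A}})\sigma_+(s_{\tmmathbf{B}})$ via~\eqref{eqsimple}, and then applies the induction hypothesis, whereas you start from $\alpha(\rho(\tmmathbf{A},\tmmathbf{B}))$ and work back.
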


\begin{proof}
The proof proceeds by induction and relies upon the following simple identity
(compare with~\eqref{eq-k1}).
For two real numbers $a$ and $b$,
\begin{equation}
\label{eqsimple} 
\sigma_+ (a) \sigma_+ (b) = \sigma_+ (a) \sigma_+ (a + b) + \sigma_+ (b)
     \sigma_+ (a + b) - \sigma_+ (a + b) .
\end{equation}
If $l(\mathbf{A})=l(\mathbf{B})=1$, then
$\pi (\alpha (\tmmathbf{A}(X)), \alpha (\tmmathbf{B}(Y)))
=\sigma_+(s_{\tmmathbf{A}(X)}) \sigma_+(s_{\tmmathbf{B}(Y)})$
and (\ref{eqsimple}) with $a =s_{\mathbf{A(X)}}$ and $b= s_{\mathbf{B(Y)}}$
gives (\ref{prodmul}) in the case $l(\mathbf{A})=l(\mathbf{B})=1$.
Let us assume now that Equation (\ref{prodmul}) holds for
$l(\mathbf{A})+l(\mathbf{B})\leq n$, for a given $n\geq 2$.
If $\tmmathbf{A}(X) =\tmmathbf{A}' (X) A (X)$ (resp. $\tmmathbf{B}(Y)
  =\tmmathbf{B}' (Y) B (Y)$) with $l(\mathbf{A})+l(\mathbf{B})=n+1$,
then,
\begin{equation}
\alpha (\tmmathbf{A}(X)) = \alpha (\tmmathbf{A}' (X) A (X))\\
= - \alpha (\tmmathbf{A}' (X)) \sigma_+ (s_{\tmmathbf{A}(X)})
\end{equation}
and
\begin{equation}
\alpha (\tmmathbf{B}(Y)) = \alpha (\tmmathbf{B}' (Y) B (Y))\\
= - \alpha (\tmmathbf{B}' (Y)) \sigma_+ (s_{\tmmathbf{B}(Y)}),
\end{equation}
with the convention $\alpha (\tmmathbf{A}' (X))=1$
(resp. $\alpha (\tmmathbf{B}' (Y))=1$)
if $\tmmathbf{A}'$ is empty (resp. $\tmmathbf{B}'$ is empty).
We have 
\begin{equation}
\begin{split}
\pi (\alpha (\tmmathbf{A}(X)), \alpha (\tmmathbf{B}(Y)))
& = \pi (\alpha (\tmmathbf{A}' (X)) \sigma_+ (s_{\tmmathbf{A}(X)}), \alpha
       (\tmmathbf{B}' (Y)) \sigma_+ (s_{\tmmathbf{B}(Y)}))\\
& = \pi (\alpha (\tmmathbf{A}' (X)), \alpha (\tmmathbf{B}' (Y)))
       \sigma_+ (s_{\tmmathbf{A}(X)}) \sigma_+ (s_{\tmmathbf{B}(Y)})
\end{split}
\end{equation}
and using once again Formula~(\ref{eqsimple})
with $a =s_{\tmmathbf{A}(X)}$ and $b= s_{\tmmathbf{B}(Y)}$,
the product $\pi (\alpha (\tmmathbf{A}(X)), \alpha (\tmmathbf{B}(Y)))$ splits
into three terms:
\begin{equation}
\begin{split}
\pi (\alpha (\tmmathbf{A}(X)), \alpha (\tmmathbf{B}(Y)))
& = \pi (\alpha (\tmmathbf{A}' (X)), \alpha (\tmmathbf{B}' (Y)))
    \sigma_+ (s_{\tmmathbf{A}(X)}) \sigma_+ (s_{\tmmathbf{A}(X)} +
    s_{\tmmathbf{B}(Y)})\\
&\ \ + \pi (\alpha (\tmmathbf{A}' (X)), \alpha (\tmmathbf{B}' (Y)))
       \sigma_+ (s_{\tmmathbf{B}(Y)}) \sigma_+ (s_{\tmmathbf{A}(X)} +
       s_{\tmmathbf{B}(Y)})\\
&\ \ - \pi (\alpha (\tmmathbf{A}' (X)), \alpha (\tmmathbf{B}' (Y)))
       \sigma_+ (s_{\tmmathbf{A}(X)} + s_{\tmmathbf{B}(Y)}).
\end{split}
\end{equation}
Thanks to the definition of $\alpha$ and of the product on $\mathcal{F}$,
\begin{equation}
\begin{split}
\pi (\alpha (\tmmathbf{A}(X)), \alpha (\tmmathbf{B}(Y)))
& = - \pi (\alpha (\tmmathbf{A}(X)), \alpha (\tmmathbf{B}' (Y)))
       \sigma_+ (s_{\tmmathbf{A}(X)} + s_{\tmmathbf{B}(Y)})\\
&\ \ - \pi (\alpha (\tmmathbf{A}' (X)), \alpha (\tmmathbf{B}(Y)))
       \sigma_+ (s_{\tmmathbf{A}(X)} + s_{\tmmathbf{B}(y)})\\
&\ \ -\pi (\alpha (\tmmathbf{A}' (X)), \alpha (\tmmathbf{B}' (Y)))
       \sigma_+ (s_{\tmmathbf{A}(X)} + s_{\tmmathbf{B}(Y)}) .
\end{split}
\end{equation}
Recursively, we can use (\ref{prodmul}) for the three terms:
\begin{equation}
\begin{split}
\pi (\alpha (\tmmathbf{A}(X)), \alpha (\tmmathbf{B}(Y)))
& = - \alpha (\pi (\tmmathbf{A}(X), \tmmathbf{B}' (Y))) \sigma_+
       (s_{\tmmathbf{A}(X)} + s_{\tmmathbf{B}(Y)})\\
&\ \ - \alpha (\pi (\tmmathbf{A}' (X), \tmmathbf{B}(Y))) \sigma_+
       (s_{\tmmathbf{A}(X)} + s_{\tmmathbf{B}(Y)})\\
&\ \ - \alpha (\pi (\tmmathbf{A}' (X), \tmmathbf{B}' (Y))) \sigma_+
       (s_{\tmmathbf{A}(X)} + s_{\tmmathbf{B}(Y)}) \\
\end{split}
\end{equation}
and, using once again the definition of $\alpha$ and of the product of ordered
partitions, we find
\begin{equation}
\begin{split}
\pi (\alpha (\tmmathbf{A}(X)), \alpha (\tmmathbf{B}(Y)))
& = \alpha (\pi (\tmmathbf{A}(X), \tmmathbf{B}' (Y)) .B (Y)) +
       \alpha (\pi (\tmmathbf{A}' (X), \tmmathbf{B}(Y)) .A (X))\\
&\ \ + \alpha (\pi (\tmmathbf{A}' (X), \tmmathbf{B}' (Y)) . (A (X)
       \cup B (Y)))\\
& = \alpha (\pi (\tmmathbf{A}(X), \tmmathbf{B}(Y)),
\end{split}
\end{equation}
so that the result follows by induction.
\end{proof}

For a standard partition $\tmmathbf{P}= (P_1, \dots, P_r)$
(with $\max (\tmmathbf{P}) = n$ and $l (\tmmathbf{P}) = r$), one can identify
$m_{\tmmathbf{P}}$ with $\tmmathbf{P}(X)$ where $X$ is ``minimal''
($|X| = \max (\tmmathbf{P})$).
For example, if
\begin{equation}
\tmmathbf{P}=\{1, 3, 3, 3\}\{2, 2, 3\}\{3, 3, 3\}\{1, 3, 3\},
\end{equation}
then $\max (\tmmathbf{P}) = 3$, $X =\{x_1, x_2, x_3 \}$,
and
% and we note $K_{\tmmathbf{P}}$ the cone $K_{\tmmathbf{P}(X)}$ :
\begin{equation}
\begin{split}
\tmmathbf{1}_{K_{\tmmathbf{P}}}
= \sigma_+ (x_1\!+\! 3 x_3)\,
  \sigma_+ (x_1 \!+\! 2 x_2 \!+\! 4 x_3)\,
  \sigma_+ (x_1 \!+\! 2 x_2 \!+\! 7 x_3)\,
  \sigma_+ (2 x_1 \!+\! 2 x_2 \!+\! 9 x_3).
\end{split}
\end{equation}

\begin{corollary}
The induced linear map from $\MQSym$ to $\mathcal{F}$ (also denoted by
$\alpha$)
\begin{equation}
\alpha (m_{\tmmathbf{P}}) = (- 1)^{l (\tmmathbf{P})}
\tmmathbf{1}_{K_{\tmmathbf{P}}}
\end{equation}
is an algebra morphism (character).
\end{corollary}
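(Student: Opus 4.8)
The plan is to deduce the corollary directly from Theorem~\ref{thMQSym} by making the minimal-alphabet identification compatible with the product of $\MQSym$. Recall that an element $m_{\tmmathbf{P}}$ of $\MQSym$ is indexed by a standard multiset partition $\tmmathbf{P}$, which I identify with the multiset composition $\tmmathbf{P}(X)$ over the minimal alphabet $X=\{x_1<\dots<x_m\}$, $m=\max(\tmmathbf{P})$. Under this identification the sign and the cone match those of Theorem~\ref{thMQSym} verbatim: $\alpha(m_{\tmmathbf{P}}) = (-1)^{l(\tmmathbf{P})}\tmmathbf{1}_{K_{\tmmathbf{P}}} = (-1)^{l(\tmmathbf{P}(X))}\tmmathbf{1}_{\tmmathbf{P}(X)}$. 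Consequently the only thing left to verify is multiplicativity, namely $\alpha(m_{\tmmathbf{P}}\,m_{\tmmathbf{Q}}) = \pi(\alpha(m_{\tmmathbf{P}}),\alpha(m_{\tmmathbf{Q}}))$ in the graded algebra $(\mathcal{F},\star)$.

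First I would spell out how the product of $\MQSym$ translates to disjoint consecutive alphabets. If $m=\max(\tmmathbf{P})$ and $n=\max(\tmmathbf{Q})$, then by definition $m_{\tmmathbf{P}}\,m_{\tmmathbf{Q}}$ is obtained by shifting $\tmmathbf{Q}$ by $m$ and then applying the operation $\rho$. On the geometric side, shifting $\tmmathbf{Q}$ by $m$ is precisely realizing it over the alphabet $Y=\{x_{m+1}<\dots<x_{m+n}\}$, disjoint from and placed after $X$. With this choice the abstract product $m_{\tmmathbf{P}}\,m_{\tmmathbf{Q}}$ coincides termwise with the product $\pi(\tmmathbf{P}(X),\tmmathbf{Q}(Y))$ of multiset compositions, since the shift built into the $\MQSym$ product is exactly the relative positioning of the two alphabets.

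Next I would check that the minimal-alphabet convention is stable under this product, so that no reindexing intervenes on the right-hand side. In each of the three recursive terms defining $\rho$ (append $B$, append $A$, or merge into $A\cup B$), no variable of $X\cup Y$ is ever discarded; because $\tmmathbf{P}$ is packed over $X$ and the shifted $\tmmathbf{Q}$ is packed over $Y$, every summand of the product is a standard multiset partition whose maximum is exactly $m+n$. Hence each summand already lives over its own minimal alphabet $\{x_1<\dots<x_{m+n}\}=X\cup Y$, so that applying the identification $m_{\tmmathbf{R}}\leftrightarrow\tmmathbf{R}(X\cup Y)$ to the terms of the product agrees with the identification implicit in the definition of $\alpha$.

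Combining these observations, the sought multiplicativity of $\alpha$ on $\MQSym$ becomes exactly the identity $\pi(\alpha(\tmmathbf{P}(X)),\alpha(\tmmathbf{Q}(Y)))=\alpha(\pi(\tmmathbf{P}(X),\tmmathbf{Q}(Y)))$ of Theorem~\ref{thMQSym}, applied with the disjoint alphabets $X$ and $Y$ above. I expect the only delicate point to be precisely this bookkeeping: confirming that the shift defining the $\MQSym$ product corresponds to placing the second alphabet immediately after the first, and that no variable is lost under merging. Once that compatibility is established the corollary is immediate, and the character property proper follows, as in the $\WQSym$ case (Corollary~\ref{corQSym}), by composing $\alpha$ with integration of $f(x_1)\dots f(x_n)$ against a fixed density over the resulting cones.
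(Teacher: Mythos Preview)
Your proposal is correct and matches the paper's intent: the corollary is stated without proof because it is meant to follow immediately from Theorem~\ref{thMQSym} together with the identification of $m_{\tmmathbf{P}}$ with $\tmmathbf{P}(X)$ over the minimal alphabet described just before the statement. Your explicit bookkeeping---that the shift in the $\MQSym$ product is exactly the placement of $Y=\{x_{m+1},\dots,x_{m+n}\}$ after $X$, and that every summand of $\rho$ remains packed over $X\cup Y$---is precisely the verification the paper leaves implicit; the final remark about obtaining scalar characters by integration is extra but harmless.
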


%%%%%%%%%%%%%%%%%%%%%%%%%%%%%%%%%%%%%%%%%%%%%%%%%%%%%%%%%%%%%%%%%%%%%%%%%%%%%%%
%%%%%%%%%%%%%%%%%%%%%%%%%%%%%%%%%%%%%%%%%%%%%%%%%%%%%%%%%%%%%%%%%%%%%%%%%%%%%%%
%%%%%%%%%%%%%%%%%%%%%%%%%%%%%%%%%%%%%%%%%%%%%%%%%%%%%%%%%%%%%%%%%%%%%%%%%%%%%%%
\section{The Rota--Baxter approach}\label{secRB}

%%%%%%%%%%%%%%%%%%%%%%%%%%%%%%%%%%%%%%%%%%%%%%%%%%%%%%%%%%%%%%%%%%%%%%%%%%%%%%%
\subsection{Convolution and iterated integrals.}\label{secintRB}

The construction of $R_f$, $L_f$ and $D_f$ (see section \ref{moyennes}) relies
upon iterated integrals involving convolution of functions. More precisely,
let $\mathcal{A}$ be the vector space of bounded integrable functions whose
restrictions to $\RR^+$ and $\RR^{-*}$ are continuous. Thanks to the
regularization effect of the convolution

\begin{equation}\label{Conv}
f*g(x)=\int_{\RR}f(y)g(x-y) dy,
\end{equation}
the convolution product of two functions of $\mathcal{A}$ is in $\mathcal{A}$
(and even continuous), so that $\mathcal{A}$ is a non-unital commutative
algebra. The second step to define these iterated integrals is to mix the
convolution product with the operations
($\varepsilon=\pm$)
\begin{equation}
\forall f\in \mathcal{A},\
  (P_{\varepsilon}f)(x)=f(x)\sigma_{\varepsilon}(x).
\end{equation}
On the one hand, using the operators $P_{\varepsilon}$, the functions defined
in Section~\ref{moyennes} for $n\geq 1$
\begin{equation}
\left [\begin{array}{ccc}
\varepsilon_1  &\dots&\varepsilon_n \\
f_1&\dots&f_n
\end{array} \right ] (x)=\left (\left [\begin{array}{ccc}
\varepsilon_1  &\dots&\varepsilon_{n-1} \\
f_1&\dots&f_{n-1}
\end{array} \right ]\ast f_n \right )(x)\sigma_{\varepsilon_n}(x)
\end{equation} 
read
\begin{equation}
\left [\begin{array}{ccc}
\varepsilon_1  &\dots&\varepsilon_n \\
f_1&\dots&f_n
\end{array} \right
]=P_{\varepsilon_n}(f_n*P_{\varepsilon_{n-1}}(f_{n-1} *\dots
*P_{\varepsilon_1}(f_1) \dots)).
\end{equation}
On the other hand, if $\mathcal{A}_\varepsilon=P_\varepsilon(\mathcal{A})$, 
$\mathcal{A}_+$ and $\mathcal{A}_-$ are two subalgebras of $\mathcal{A}$ such
that $\mathcal{A}=\mathcal{A}_+\oplus\mathcal{A}_-$
and using equation (\ref{eqsimple}), we get
\begin{equation}
\forall f,g \in \mathcal{A},\quad
P_+(f*g)+P_+(f)*P_+(g)=P_+(f*P_+(g)+P_+(f)*g).
\end{equation}
In other words (see \cite{EFG} or \cite{manchon}), $(\mathcal{A}, P_+)$ is a
Rota-Baxter algebra of weight $1$ and the properties of our iterated integrals
can be derived from the properties of such Rota-Baxter algebras.

%%%%%%%%%%%%%%%%%%%%%%%%%%%%%%%%%%%%%%%%%%%%%%%%%%%%%%%%%%%%%%%%%%%%%%%%%%%%%%%
\subsection{A short reminder on Rota-Baxter algebras.}

For a given commutative $\KK$-algebra $A$, let us consider the
tensor algebra $T(A)=\KK {\bf 1}\oplus(\oplus_{n\geq 1}A^{\otimes^n})$, with
the quasishuffle product $\pi$: for $(\mathbf{a},\mathbf{b})\in T(A)^2$ and
$(a,b)\in A^2$,
\begin{equation}
\label{QSTens}
 \pi(\mathbf{a}\otimes a,\mathbf{b}\otimes
b)=\pi(\mathbf{a}\otimes a,\mathbf{b})\otimes b
+\pi(\mathbf{a},\mathbf{b}\otimes b)\otimes a +
\pi(\mathbf{a},\mathbf{b})\otimes (ab).
\end{equation}
This algebra is indeed a Hopf algebra for the deconcatenation coproduct (see
\cite{Hof}), thus we can consider the group of characters from $T(A)$ to the
unitarization $\KK 1 \oplus A$ of $A$.

Assume now that $A=A_+\oplus A_-$ where $A_+$ and $A_-$ are subalgebras of
$A$, then $A$ is a Rota-Baxter algebra: if $R$ is the projection of $A$ on
$A_+$, parallel to $A_-$ then we get the Rota-Baxter identity
\begin{equation}\label{RBrel}
\forall (x,y)\in A^2,\quad R (x y) + R (x) R (y) = R (x R (y) + R (x)
y).
\end{equation}

Rota-Baxter algebras have been deeply studied in the framework of
renormalization in perturbative quantum field theory (see \cite{EFG},
\cite{manchon}) and
\begin{proposition}
 The map from $T(A)$ to $\KK 1 \oplus A$ defined by
  $C({\bf 1})=1$ and
\begin{equation}
C (a_1\otimes \dots \otimes a_s)=(- 1)^s R (R(\dots R(R(a_1)a_2)\dots)a_s) 
\end{equation}
is a {\it character} on $T(A)$.
\end{proposition}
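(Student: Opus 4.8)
The plan is to prove multiplicativity, namely $C(\pi(\mathbf{a},\mathbf{b}))=C(\mathbf{a})\,C(\mathbf{b})$ for all $\mathbf{a},\mathbf{b}\in T(A)$, by induction on the total tensor length $|\mathbf{a}|+|\mathbf{b}|$. The first step I would take is to rewrite the recursive definition of $C$ in the compact form $C(\mathbf{1})=1$ and $C(\mathbf{a}'\otimes a)=-R\bigl(C(\mathbf{a}')\,a\bigr)$, the leading sign absorbing the factor $(-1)^s$. Here the argument $C(\mathbf{a}')\,a$ always lies in $A$: if $\mathbf{a}'$ is empty then $C(\mathbf{a}')=1$ and $C(\mathbf{a}')\,a=a$, while otherwise $C(\mathbf{a}')\in A$ and the product is taken in $A$; in either case $R$ is applied to an element of $A$, so the formula is well posed.

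The base case is when one of the arguments equals $\mathbf{1}$, where the claim is immediate since $\pi(\mathbf{1},\mathbf{b})=\mathbf{b}$ and $C(\mathbf{1})=1$. For the inductive step I would write $\mathbf{a}=\mathbf{a}'\otimes a$ and $\mathbf{b}=\mathbf{b}'\otimes b$ and expand $\pi(\mathbf{a},\mathbf{b})$ by the recursive rule \eqref{QSTens} into the three terms ending in $b$, in $a$, and in $ab$. Applying the compact recursion for $C$ and then the induction hypothesis to each of $\pi(\mathbf{a},\mathbf{b}')$, $\pi(\mathbf{a}',\mathbf{b})$ and $\pi(\mathbf{a}',\mathbf{b}')$, and writing $u=C(\mathbf{a}')\,a$ and $v=C(\mathbf{b}')\,b$ (so that $C(\mathbf{a})=-R(u)$ and $C(\mathbf{b})=-R(v)$), a short sign computation together with commutativity of $A$ yields
\[
C(\pi(\mathbf{a},\mathbf{b}))=R\bigl(R(u)\,v\bigr)+R\bigl(u\,R(v)\bigr)-R(uv).
\]

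The crux is then a single application of the weight-one Rota--Baxter identity \eqref{RBrel}: since $R\bigl(R(u)\,v+u\,R(v)\bigr)=R(uv)+R(u)R(v)$, the right-hand side collapses to $R(u)R(v)$, which is exactly $C(\mathbf{a})\,C(\mathbf{b})$. I expect the only delicate points to be bookkeeping of signs and the role of the unit: the entire argument works precisely because the factor $(-1)^s$ in the definition of $C$ turns the three-term quasishuffle expansion into the combination matched by \eqref{RBrel}, and because $A$ is commutative the cross term $R\bigl((C(\mathbf{a}')a)(C(\mathbf{b}')b)\bigr)$ is symmetric in $\mathbf{a}$ and $\mathbf{b}$. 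No analytic estimate or limiting argument enters; the statement is a purely algebraic identity valid in any commutative weight-one Rota--Baxter algebra.
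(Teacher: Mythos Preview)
Your proof is correct and follows exactly the route the paper indicates: the paper merely states that the result ``follows immediately from the Rota--Baxter relation \eqref{RBrel} and the recursive definition of the quasishuffle product \eqref{QSTens}'', and you have supplied precisely that induction, with the sign bookkeeping and the single application of \eqref{RBrel} made explicit.
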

This follows immediately from the Rota-Baxter
relation (\ref{RBrel}) and the recursive definition of the quasishuffle
product (\ref{QSTens}).

Now Theorem \ref{thmRm} can be easily deduced from the following corollary:

\begin{corollary} Let $I$ and $V$ be two maps from $A$ to $\KK$ such that
  $I(ab)=I(a)I(b)$ and $V(R(a)R(b))$=0, then
\begin{itemize}
\item The map $I\circ C$ is a character from $T(A)$ to $\KK$.
\item The map $V\circ C$ is an infinitesimal character from $T(A)$ to $\KK$. 
\end{itemize}
\end{corollary}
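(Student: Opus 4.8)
The plan is to deduce both assertions from the preceding Proposition, which already guarantees that $C$ is a character from $T(A)$ to the unitarization $\KK 1\oplus A$; the two maps $I$ and $V$ then only have to be recognized as, respectively, a character and an infinitesimal character of $\KK 1\oplus A$, after which one simply composes. First I would extend $I$ to $\KK 1\oplus A$ by setting $I(1)=1$ (keeping the same name for the extension). A direct check on $x=\alpha 1+a$ and $y=\beta 1+b$ shows that the multiplicativity hypothesis $I(ab)=I(a)I(b)$ yields $I(xy)=I(x)I(y)$, so this extension is an algebra morphism. Since a composite of two characters is again a character, $I\circ C$ is a character from $T(A)$ to $\KK$, which settles the first item.

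For the second item, I would extend $V$ to $\KK 1\oplus A$ by $V(1)=0$ and aim to verify the defining relation of an infinitesimal character, namely $(V\circ C)(\pi(\mathbf a,\mathbf b))=(V\circ C)(\mathbf a)\,\varepsilon(\mathbf b)+\varepsilon(\mathbf a)\,(V\circ C)(\mathbf b)$, where $\varepsilon$ is the deconcatenation counit of $T(A)$, i.e.\ the projection onto $\KK{\bf 1}$. The key structural remark is that $C$ maps the augmentation ideal $\bigoplus_{n\ge 1}A^{\otimes n}$ into $\mathrm{Im}(R)=A_+$: this is immediate from the defining formula $C(a_1\otimes\dots\otimes a_s)=(-1)^sR(\dots)$, whose outermost operation is $R$. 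Hence, writing $\mathbf a=\varepsilon(\mathbf a){\bf 1}+\mathbf a'$ with $\mathbf a'$ in the augmentation ideal, one has $C(\mathbf a)=\varepsilon(\mathbf a)\,1+\bar a$ with $\bar a:=C(\mathbf a')\in A_+$, and similarly $C(\mathbf b)=\varepsilon(\mathbf b)\,1+\bar b$ with $\bar b\in A_+$; note also that $V(1)=0$ gives $(V\circ C)(\mathbf a)=V(\bar a)$.

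Then I would use that $C$ is a character to expand $C(\pi(\mathbf a,\mathbf b))=C(\mathbf a)\,C(\mathbf b)=\varepsilon(\mathbf a)\varepsilon(\mathbf b)\,1+\varepsilon(\mathbf a)\bar b+\varepsilon(\mathbf b)\bar a+\bar a\bar b$ and apply $V$. This produces $(V\circ C)(\pi(\mathbf a,\mathbf b))=\varepsilon(\mathbf a)V(\bar b)+\varepsilon(\mathbf b)V(\bar a)+V(\bar a\bar b)$, whose first two terms are exactly $(V\circ C)(\mathbf a)\varepsilon(\mathbf b)+\varepsilon(\mathbf a)(V\circ C)(\mathbf b)$. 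Thus the whole matter reduces to showing that the cross term $V(\bar a\bar b)$ vanishes; and since $\bar a$ and $\bar b$ lie in $\mathrm{Im}(R)$, say $\bar a=R(x)$ and $\bar b=R(y)$, the hypothesis $V(R(x)R(y))=0$ gives precisely $V(\bar a\bar b)=0$.

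The only genuinely load-bearing step — the one to state carefully, as opposed to the routine bookkeeping of the unitarization and the counit — is the structural observation that $C$ carries the augmentation ideal into $\mathrm{Im}(R)$. This is exactly what aligns the hypothesis $V(R(a)R(b))=0$ with the shape of $C$ and makes the cross term disappear; once it is in place, both items follow formally from the fact that $C$ is a character.
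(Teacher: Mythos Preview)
Your argument is correct and matches the paper's intended approach: the paper states the corollary as an immediate consequence of the preceding Proposition (that $C$ is a character of $T(A)$ with values in $\KK 1\oplus A$) and gives no further details, so you have simply spelled out the natural deduction. Your key structural observation---that $C$ sends the augmentation ideal into $\operatorname{Im}(R)=A_+$ because the outermost operation in the defining formula is $R$---is precisely the point that makes the hypothesis $V(R(a)R(b))=0$ bite, and is exactly what the paper is taking for granted.
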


This corollary is closely related to our previous iterated integrals since,
for any given $a\in A$, the coefficients
\begin{equation}
M_I(a)=M_{i_1,\dots,i_r}=C(a^{i_1}\otimes \dots \otimes a^{i_r})
\end{equation} 
define a symmetrel mould (with values in $A$) on $\KK\<\NN^*\>$ or,
equivalently, an $A$-valued character on the quasishuffle Hopf algebra
$\KK\<\NN^*\>$ (see section \ref{secmould}).

Using this character, if there exist maps $I$ and $D$ on $A$ with values in
$\KK$ such that $I(ab)=I(a)I(b)$ and $D(R(a)R(b))$=0, then the coefficients
\begin{equation}
M_a^{n_1,..,n_s}=I(C(a^{n_1} \otimes \dots \otimes a^{n_s}))
\end{equation}
define a character of the quasishuffle Hopf algebra $\KK\<\NN^*\>$ and the
coefficients
\begin{equation}
D_a^{n_1,..,n_s}=D(C(a^{n_1} \otimes \dots \otimes a^{n_s}))
\end{equation}
define an infinitesimal character.

%%%%%%%%%%%%%%%%%%%%%%%%%%%%%%%%%%%%%%%%%%%%%%%%%%%%%%%%%%%%%%%%%%%%%%%%%%%%%%%
\subsection{A proof of Theorem \ref{thmRm}.}

These results provide a proof of Theorem \ref{thmRm} when applied to the
Rota-Baxter algebra $\mathcal{A}$ defined in Section~\ref{secintRB},
with $I:\mathcal{A}\rightarrow \RR$ the integral over $\RR$ and
$D:\mathcal{A}\rightarrow \RR$ the evaluation at $x=0$ ($D(f)=f(0)$).

For example, recall that, for a function $f$ in $A$, the associated grouplike
element $R_f$ can be written 

\begin{equation}
R_f=\sum \<R_f,\Lambda\>_I \Lambda^I
\end{equation}
where, for $I=(i_1,..,i_r)\vDash n$,

\begin{equation}
\begin{array}{rcl}
\<R_f,\Lambda\>_I &=& \displaystyle (-1)^{r+n}\int_{\RR} \left
    [ \begin{array}{ccc}
+ &\dots &+ \\
f^{*i_1} & \dots & f^{*i_r}
\end{array} \right ](x) dx 
\end{array}
\end{equation}
but

\begin{equation}
\<R_f,\Lambda\>_I= (-1)^n I(C(f^{i_1}\otimes \dots \otimes f^{i_r}))
\end{equation}
is a character on the quasishuffle algebra $\KK\<\NN^*\>$,
so $R_f$ is grouplike.
The same holds for the primitive element $D_f$.

%%%%%%%%%%%%%%%%%%%%%%%%%%%%%%%%%%%%%%%%%%%%%%%%%%%%%%%%%%%%%%%%%%%%%%%%%%%%%%
%%%%%%%%%%%%%%%%%%%%%%%%%%%%%%%%%%%%%%%%%%%%%%%%%%%%%%%%%%%%%%%%%%%%%%%%%%%%%%%
%%%%%%%%%%%%%%%%%%%%%%%%%%%%%%%%%%%%%%%%%%%%%%%%%%%%%%%%%%%%%%%%%%%%%%%%%%%%%%%
\section{The Catalan idempotents}
\label{catalan}

Such iterated integrals are difficult to compute in general but for a specific
family of functions, these integrals can be evaluated in closed form and yield
a new family of primitive elements of $\Sym$,  originally introduced in
\cite{e1} with a different interpretation. 
Up to a normalization, they provide new Lie idempotents whose combinatorial
meaning is still under investigation\footnote{Since the first version of the
present paper was released as a preprint, F. Chapoton \cite{Cha1} has found a
combinatorial interpretation of the coefficients of these idempotents on the
natural basis of the free preLie algebra on one generator.}.

%%%%%%%%%%%%%%%%%%%%%%%%%%%%%%%%%%%%%%%%%%%%%%%%%%%%%%%%%%%%%%%%%%%%%%%%%%%%%%
\subsection{The Catalan triangle}

Consider the generating series
\begin{equation}
\ca (a, b, t)
 = \frac{1 - (a + b) t - \sqrt{1 - 2 (a + b) t + (b - a)^2 t^2}}{2 a b t}
 = \sum_{n \geq 1} \ca_n (a, b) t^n .
\end{equation}
The coefficients $\ca_n(a,b)$ are homogeneous and symmetric polynomials
in $a$, $b$ of degree $n-1$:
\begin{equation}
\begin{array}{rcc}
\ca_1(a,b)&=&1 \\
\ca_2(a,b)&=&a+b \\
\ca_3(a,b)&=&a^2+3ab+b^2 \\
\ca_4(a,b)&=&a^3+6 a^2b+6ab^2+b^3 \\
\ca_5(a,b)&=&a^4+10a^3b+20a^2b^2+10ab^3+b^4 
\end{array}
\end{equation}
and we recognize the Catalan triangle of Narayana numbers
$T(n,k)=\displaystyle \frac{1}{k}C^{k-1}_{n-1}C^{k-1}_{n}$
(see \cite{fra}):
\begin{equation}
\forall n\geq 1,\quad \ca_n(a,b)=\sum_{i=0}^{n-1}T(n,i+1)a^ib^{n-1-i}.
\end{equation}

For any sequence of signs
$\tmmathbf{\varepsilon} =(\varepsilon_1,...,\varepsilon_{n})$ ($n\geq 1$),
consider its minimal decomposition into stacks of identical signs 
\begin{equation}
\tmmathbf{\varepsilon}= (\varepsilon_1, \dots, \varepsilon_n) =
     (\eta_1)^{n_1} \dots (\eta_s)^{n_s},
\end{equation}
with $\eta_i \not= \eta_{i + 1}$ and $n_1 + \dots + n_s = n$. Then,

\begin{theorem}
\label{thcat}
For $n\geq 1$, the element of $\Sym_{n+1}$ defined by
\begin{equation}
D_{a,b}^{n+1}=\sum_{
\tmmathbf{\varepsilon} % =(\varepsilon_1,...,\varepsilon_{n}) 
= (\eta_1)^{n_1} \dots (\eta_s)^{n_s} }
  \left( \prod_{\gf{\eta_i = +}{i < s}} a \right)
  \left( \prod_{\gf{\eta_i = -}{i < s}} b \right)
  {ca}_{n_1} (a, b) \dots {ca}_{n_s} (a, b)
   R_{\tmmathbf{\varepsilon}\bullet}
\end{equation}
is primitive.
\end{theorem}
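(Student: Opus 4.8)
The plan is to recognize each $D_{a,b}^{n+1}$ as the degree-$(n+1)$ homogeneous component of one of the primitive series $D_f$ produced by Theorem~\ref{thmRm}, for a carefully chosen two-parameter density $f=f_{a,b}\in\mathcal{A}$. Once the coefficients $d_{f_{a,b}}^{\tmmathbf{\varepsilon}+}$ are shown to coincide with the Catalan weights attached to $R_{\tmmathbf{\varepsilon}\bullet}$, primitivity is not to be proved by hand: it is automatic from the primitivity of $D_f$ (equivalently, from the corollary of Section~\ref{secRB}, applied to the Rota--Baxter algebra $(\mathcal{A},P_+)$ with $I(g)=\int g$ and $D(g)=g(0)$). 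Thus the statement reduces to an \emph{evaluation} in closed form of the iterated integrals \eqref{deps}, and the genuine content lies there. Since both sides of the sought identity are polynomial in $(a,b)$ --- the left-hand side through the factors $\ca_{n_i}(a,b)$ and the boundary weights --- it suffices to establish it for $(a,b)$ in a Zariski-dense set, for instance for real $a,b>0$ where $f_{a,b}$ is a genuine probability density; the conclusion then extends to all $(a,b)$.

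The natural candidate for $f_{a,b}$ is a two-sided exponential density, with decay rate governed by $a$ on $\RR^+$ and by $b$ on $\RR^-$ (up to normalization). This choice is dictated by the shape of $\ca(a,b,t)$: a direct computation from its definition shows that $\ca$ satisfies the quadratic functional equation
\begin{equation}
ab\,t\,\ca^2 + \bigl((a+b)t-1\bigr)\,\ca + t = 0,
\end{equation}
which is exactly the fixed-point equation obeyed by the generating series of constrained ladder-type paths of a random walk with exponential increments. To exploit this, I would pass from the increment variables to the partial sums $S_k=x_1+\dots+x_k$ in \eqref{deps}: the Dirac mass forces the final partial sum to vanish, and $d_f^{\tmmathbf{\varepsilon}}$ becomes an integral of the path-product $f(S_1)f(S_2-S_1)\cdots$ (ending with $f$ of the return step to $0$) over the orthant in which each intermediate partial sum $S_k$ carries the prescribed sign $\varepsilon_k$ --- that is, a weighted count of paths from $0$ back to $0$ whose sign pattern is dictated by $\tmmathbf{\varepsilon}$.

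The decisive step is then a factorization of this integral along the maximal constant-sign blocks (stacks) $(\eta_1)^{n_1}\cdots(\eta_s)^{n_s}$ of $\tmmathbf{\varepsilon}$. Using the memoryless (convolution-semigroup) property of the exponential, I would show that the path decouples at each sign change: the portion inside the $i$-th stack contributes a factor $\ca_{n_i}(a,b)$ --- this is where the quadratic equation enters, through a first-return/excursion decomposition of the within-block paths that reproduces the Narayana recursion with its two statistics --- while each of the $s-1$ internal sign changes contributes the directional weight $a$ (after a $+$ block) or $b$ (after a $-$ block). Multiplying these contributions reproduces exactly the coefficient of $R_{\tmmathbf{\varepsilon}\bullet}$ in $D_{a,b}^{n+1}$. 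The consistency relation $d_f^{\tmmathbf{\varepsilon}+}=-d_f^{\tmmathbf{\varepsilon}-}$ required by Theorem~\ref{thmRm} need not be checked separately, since it holds automatically once the weights are realized as genuine $d_f$'s of a well-defined $D_f$.

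The main obstacle is precisely this block factorization together with the single-block Narayana evaluation: one must handle the endpoint Dirac mass and the continuity of $f$ at $0$ with care, and verify that the first-return decomposition inside a stack matches the quadratic equation for $\ca$ with the correct bookkeeping of $a$ and $b$. Everything else --- the reduction to $D_f$, and hence primitivity --- is then formal.
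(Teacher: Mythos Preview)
Your proposal is correct and follows essentially the same route as the paper: choose the two-sided exponential $f(x)=2(a\sigma_+(x)+b\sigma_-(x))e^{-|x|}$, invoke Theorem~\ref{thmRm} so that primitivity is automatic, and reduce everything to evaluating $d_f^{\tmmathbf{\varepsilon}+}$ in closed form via a factorization along maximal constant-sign stacks.

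The only genuine difference is in how the single-block contribution is extracted. The paper works entirely in the convolutive (Rota--Baxter) picture: it computes the iterated functions $f^{+\cdots+}(x)=P_n(a,b,x)\sigma_+(x)e^{-|x|}$ by writing a recursion for the polynomials $P_n$, sums it into a generating function $P(a,b,x,t)=2u(t)e^{2u(t)x}$, and solves the resulting algebraic equation $u=at+bt\,u/(1-u)$. The key lemma is then the renewal identity $f^{+^n-}(x)=a\,\ca_n(a,b)\,f^-(x)$, which says that after a sign change the iteration restarts from a scalar multiple of $f^\mp$; the block factorization is immediate from this. Your proposal instead passes to partial-sum coordinates and appeals to a first-return/excursion decomposition to derive the quadratic for $\ca$ directly. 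This is the same mechanism seen from the probabilistic side (the memoryless property of the exponential is exactly what makes the paper's renewal lemma work), and it has the advantage of explaining \emph{why} the Narayana numbers appear; the paper's computation is more mechanical but leaves no analytic details to check. Two minor points: the paper does not normalize $f$ to a probability density (its integral is $2(a+b)$), which is harmless since Theorem~\ref{thmRm} only needs $f\in\mathcal{A}$; and the final coefficient carries an overall factor that the paper tracks as $\tfrac12 f^{\tmmathbf{\varepsilon},+}(0)$, so you should be careful with the normalization when matching.
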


For example, using the correspondence with the usual noncommutative
ribbon Schur functions,
\[
\begin{array}{rcl} 
D_{a,b}^2 &=& R_2 -R_{11} \\
D_{a,b}^3 &=& (a+b)R_3 -a R_{21} -b R_{12}+(a+b)R_{111} \\
D_{a,b}^4 &=& (a^2+3ab+b^2) R_4 -a(a+b)R_{31}-
abR_{22}-(a+b)bR_{13} \\
 &&+a(a+b)R_{211}+abR_{121}+(a+b)bR_{112}-(a^2+3ab+b^2)R_{1111}
\end{array}
\]

%{\bf TODO} 
%\begin{itemize}
%\item Cas sp\'eciaux $a=b$, $a=-b$, $a=0$ ?
%\item Dans $D^{n+1}$, si on isole la partie en $a^kb^{n-k}$ c'est
%  aussi primitif donc on a pleins d'\'el\'ements primitifs ...
%\item Ces nombres de Narayana d\'enombrent certains arbres binaires,
%  voir \cite{fra}, c'est peut-\^etre une piste ?
%\end{itemize}

%%%%%%%%%%%%%%%%%%%%%%%%%%%%%%%%%%%%%%%%%%%%%%%%%%%%%%%%%%%%%%%%%%%%%%%%%%%%%%%
\subsection{Proof of Theorem \ref{thcat}}

We apply Theorem \ref{thmRm} to
\begin{equation}
f (x) =2 (a  \sigma_+ (x) + b \sigma_- (x))e^{-|x|}
\end{equation}
where  $a$, $b$ are two real numbers.
Note that
\begin{equation} 
\int_{\RR} f (x) d x =2( a  + b)\,.
\end{equation}
To explicit the (Catalan) operators associated with this function, we
essentially need to compute the function 
\begin{equation} 
f^{\varepsilon_1, \dots, \varepsilon_n} = \left [\begin{array}{ccc}
\varepsilon_1 & \dots & \varepsilon_n \\
f & \dots & f
\end{array} \right ] \,.
\end{equation}

\begin{lemma}
We have, for $n\geq 1$,
\begin{equation} 
\begin{array}{ccc}
f^{\overbrace{+ \dots +}^n} (x) & = & P_n (a, b, x) \sigma_+ (x) e^{- |x|}, \\
f^{\overbrace{- \dots -}^n} (x) & = & P_n (b, a, x) \sigma_- (x) e^{ -|x|},
\end{array} 
\end{equation}
where $P_n (a, b, x) \in \RR[a, b, x]$ is of degree $n-1$ in $x$ and
homogeneous of degree $n$ in $a$, $b$. Moreover,
\begin{equation} 
P (a, b, x, t) = \sum_{n \geq 1} P_n (a, b, x) t^n = 2u (t) e^{2 u (t) x},
\end{equation}
with
\begin{equation} 
u (t) = \frac{1 - (b - a) t - \sqrt{1 - 2 (a + b) t + (b - a)^2 t^2}}{2}.
\end{equation}
\end{lemma}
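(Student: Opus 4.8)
The plan is to pass to the generating function in $t$ and reduce the entire statement to a single quadratic equation. First I would record the recursion satisfied by the brackets. Specializing the recursive definition of $\left[\begin{smallmatrix}\varepsilon_1&\cdots&\varepsilon_n\\ f&\cdots&f\end{smallmatrix}\right]$ to all signs $+$ gives $f^{+}=f\,\sigma_+$ and $f^{+^n}=(f^{+^{n-1}}*f)\,\sigma_+$ for $n\ge 2$. Introducing $G(x,t)=\sum_{n\ge 1}f^{+^n}(x)\,t^n$ and summing the recursion over $n$ yields the closed fixed-point relation
\[
G(x,t)=t\,\sigma_+(x)\big(f(x)+(G(\cdot,t)*f)(x)\big),
\]
understood as an identity of formal power series in $t$ (equivalently, valid for $t$ small enough that the integrals below converge). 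Since each $f^{+^n}$ carries a factor $\sigma_+$, the series $G$ is supported on $x\ge 0$, which already matches the claimed shape $P_n\,\sigma_+\,e^{-|x|}$.

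Next I would insert the ansatz $G(x,t)=2u(t)\,e^{(2u(t)-1)x}$ for $x\ge 0$, that is $G=P(a,b,\cdot,t)\,e^{-x}$ with $P=2u\,e^{2ux}$, and check that the fixed-point relation forces the stated $u$. Using $f(x)=2a\,e^{-x}$ for $x\ge 0$ and $f(x)=2b\,e^{x}$ for $x<0$, and that $G$ lives on $[0,\infty)$, the convolution splits at $y=x$:
\[
(G*f)(x)=2a\,e^{-x}\int_0^x 2u\,e^{2uy}\,dy+2b\,e^{x}\int_x^\infty 2u\,e^{(2u-2)y}\,dy,
\]
where the second integral converges because $u(0)=0$, hence $u<1$ near $t=0$. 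Evaluating gives $(G*f)(x)=2a\,e^{-x}(e^{2ux}-1)+\frac{2bu}{1-u}\,e^{(2u-1)x}$, and adding $f(x)=2a\,e^{-x}$ collapses the telescoping constant, leaving $f+G*f=\big(2a+\frac{2bu}{1-u}\big)e^{(2u-1)x}$. Equating $t\,(f+G*f)$ with $G=2u\,e^{(2u-1)x}$ cancels the exponential and reduces to $u=t\,\frac{a+(b-a)u}{1-u}$, i.e. $u^2+((b-a)t-1)u+at=0$, whose discriminant is $1-2(a+b)t+(b-a)^2t^2$ and whose branch with $u(0)=0$ is precisely the $u(t)$ of the statement. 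This convolution-and-collapse step is the heart of the argument and the place most likely to hide a sign or boundary error, so I would treat the two regions $0\le y<x$ and $y>x$ separately and with care.

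It then remains to read off the structural claims and the all-minus case. The substitution $(a,b,t)\mapsto(\lambda a,\lambda b,t/\lambda)$ leaves the quadratic invariant, so writing $u(t)=\sum_{n\ge 1}u_n(a,b)\,t^n$ forces each $u_n$ to be homogeneous of degree $n$ in $(a,b)$, with $u_1=a$. Expanding $P=2u\,e^{2ux}=\sum_{k\ge 0}\frac{2^{k+1}x^k}{k!}\,u^{k+1}$ and taking the coefficient of $t^n$ gives $P_n=\sum_{k=0}^{n-1}\frac{2^{k+1}x^k}{k!}\,[t^n]u^{k+1}$, a polynomial in $x$ of degree exactly $n-1$ (the $k=n-1$ term contributes $\frac{2^n}{(n-1)!}a^n x^{n-1}\neq 0$) and homogeneous of degree $n$ in $(a,b)$, as claimed. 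Finally, the all-minus identity follows from the invariance of $f$ under the reflection $x\mapsto -x$: this reflection exchanges $\sigma_+\leftrightarrow\sigma_-$ up to a null set, turns $f$ into the same function with $a$ and $b$ interchanged, and is compatible with convolution, so the recursion for $f^{-^n}$ reproduces the $f^{+^n}$ computation after the exchange $a\leftrightarrow b$ together with $x\mapsto -x$, which yields the second line.
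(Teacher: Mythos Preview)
Your argument is correct and is essentially the paper's own proof: both set up the convolution recursion, pass to the generating series, insert the ansatz $2u(t)e^{2u(t)x}$, and arrive at the same quadratic $u=at+bt\,u/(1-u)$ (equivalently $u^2+((b-a)t-1)u+at=0$) for $u$. You go slightly further by explicitly verifying the degree and homogeneity of $P_n$ and by handling the all-minus line via the reflection $x\mapsto -x$, $a\leftrightarrow b$, which the paper states but does not write out.
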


\begin{proof}
Let $g_n (x) = f^{\overbrace{+ \dots +}^n} (x) = P_n(a,b,x)
\sigma_+ (x) e^{-| x|}$ and $P_n(x) = P_n(a,b,x)$. We have

\begin{equation} 
\begin{array}{rcl}
g_{n + 1} (x) & = &
   \displaystyle \sigma_+ (x) \int_{\RR} g_n (y) f (x - y) d y\\
              & = &
   \displaystyle  \sigma_+ (x)
   \left( \int_{\RR^+} g_n (y) f^+ (x - y) d y
   + \int_{\RR^+} g_n (y) f^- (x - y) d y \right)\\
              & = &
   \displaystyle 2 \sigma_+ (x)
   \left( \int_0^x P_n (y) e^{- y} a e^{-(x - y)} d y
   + \int_x^{+ \infty} P_n (y) e^{- y} b e^{+(x - y)} d y \right)\\
              & = &
   \displaystyle  2 \sigma_+ (x) e^{- x}
   \left( \int_0^x a  P_n (y) d y
   + e^{2 x} \int_x^{+ \infty} b  P_n (y) e^{- 2 y} d y
       \right).
     \end{array} 
\end{equation}
This equation defines recursively the polynomials $P_n$ and, for the
generating function ($P_1 = 2 a $), we have

\begin{equation} 
P (x, t) = 2t \left( a +  \int_0^x a  P (y, t) dy +
      e^{2 x} \int_x^{+ \infty} b  P (y, t) e^{- 2y} dy
     \right) .
\end{equation}
If we substitute
\begin{equation} 
P (x, t) =2 u (t) e^{2 u (t) x} \,,
\end{equation}
  we get
\begin{equation}  
2u(t) e^{2u(t)  x} = 2a  t + 2a t (e^{2u (t) x} -
     1) +2 b t  \frac{ 2u (t)}{2 -  2u (t)} e^{2u (t)
      x} \,,
\end{equation}
so that
\begin{equation} 
u = a t + b t \frac{u}{1 - u}
\end{equation}
 which gives the expected generating function.
\end{proof}

\begin{lemma}
  Let
\begin{equation} 
\tmop{ca} (a, b, t) = \frac{1 - (a + b) t - \sqrt{1 - 2 (a + b) t + (b -
     a)^2 t^2}}{2 a b t} = \sum_{n \geq 1} \tmop{ca}_n (a, b) t^n .
\end{equation}
Then, for $n \geq 1$,
  \begin{equation}
 \begin{array}{ccc}
       f^{\overbrace{+ \dots +}^n -} (x) & = & a \tmop{ca}_n (a, b) f^- (x)\\
       f^{\overbrace{- \dots -}^n +} (x) & = & b \tmop{ca}_n (a, b) f^+ (x)
     \end{array} 
\end{equation}
\end{lemma}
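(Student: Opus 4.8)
The plan is to unfold the recursive definition of the bracket, reduce each identity to a single generating-function computation, and then obtain the second identity from the first by a reflection symmetry.

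First I would use the recursion defining the bracket to write, for the sequence of $n$ plus signs followed by one minus sign,
\begin{equation}
f^{\overbrace{+ \dots +}^n -}(x)
 = \sigma_-(x)\,(g_n * f)(x),
\qquad g_n(y) = f^{\overbrace{+ \dots +}^n}(y) = P_n(a,b,y)\,\sigma_+(y)\,e^{-|y|},
\end{equation}
where $g_n$ is supplied by the previous lemma. For $x<0$ the support condition $y>0$ forces $x-y<0$, so $f(x-y)=2b\,e^{x-y}$, and the convolution collapses to
\begin{equation}
(g_n*f)(x) = 2b\,e^{x}\int_0^{\infty} P_n(a,b,y)\,e^{-2y}\,dy
\qquad (x<0).
\end{equation}
Since $f^-(x) = 2b\,\sigma_-(x)\,e^{x}$, this already yields $f^{\overbrace{+ \dots +}^n -}(x) = \kappa_n\, f^-(x)$ with $\kappa_n := \int_0^{\infty}P_n(a,b,y)e^{-2y}\,dy$, so it remains only to identify the scalar $\kappa_n$ with $a\,\ca_n(a,b)$.

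To compute $\kappa_n$ I would pass to generating functions and insert $P(a,b,y,t)=2u(t)e^{2u(t)y}$ from the previous lemma:
\begin{equation}
\sum_{n\ge 1} \kappa_n\,t^n
 = \int_0^{\infty} 2u\,e^{2(u-1)y}\,dy
 = \frac{u}{1-u},
\end{equation}
convergent for small $t$ (where $u<1$). Thus the first identity is equivalent to the scalar identity $\dfrac{u}{1-u} = a\,\ca(a,b,t)$. This is the key algebraic step, and the part I expect to be the main obstacle. Setting $v := u/(1-u)$, the functional equation $u = at + bt\,\dfrac{u}{1-u}$ of the previous lemma reads $u = at + bt\,v$; combined with $u = v/(1+v)$ it gives the quadratic
\begin{equation}
bt\,v^2 - (1-(a+b)t)\,v + at = 0 .
\end{equation}
On the other hand, squaring the defining radical shows $\ca := \ca(a,b,t)$ satisfies $abt\,\ca^2 - (1-(a+b)t)\,\ca + t = 0$, and the substitution $\ca = v/a$ turns this into exactly the same quadratic for $v$. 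As both $v=u/(1-u)$ and $v=a\,\ca(a,b,t)$ are the unique power-series solution vanishing at $t=0$, they coincide; extracting the coefficient of $t^n$ gives $\kappa_n = a\,\ca_n(a,b)$ and hence the first identity.

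Finally, the second identity follows by symmetry rather than a repeat computation. Writing $f_{a,b}$ for the present function and $(Rg)(x):=g(-x)$, one checks $R f_{a,b}=f_{b,a}$ almost everywhere, while $R(\sigma_\pm g)=\sigma_\mp(Rg)$ and $R(g*h)=(Rg)*(Rh)$. A short induction then shows that $R$ intertwines the bracket for $f_{a,b}$ with signs $\varepsilon_i$ and the bracket for $f_{b,a}$ with signs $-\varepsilon_i$, i.e. $f_{a,b}^{-\dots-+}=R\bigl(f_{b,a}^{+\dots+-}\bigr)$. Applying the first identity to $f_{b,a}$, using $\ca_n(a,b)=\ca_n(b,a)$, and noting $R(f_{b,a}^-)=f_{a,b}^+$ almost everywhere, one obtains $f^{\overbrace{- \dots -}^n +}(x) = b\,\ca_n(a,b)\,f^+(x)$. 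Everything apart from the generating-function identification $\tfrac{u}{1-u}=a\,\ca$ is bookkeeping of supports and the reflection symmetry.
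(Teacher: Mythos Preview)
Your proof is correct and follows essentially the same route as the paper's: compute the convolution $(g_n*f)(x)$ for $x<0$, pass to the generating function $P(a,b,y,t)=2u(t)e^{2u(t)y}$, and obtain $\sum_n\kappa_n t^n=\dfrac{u}{1-u}$. The only minor difference is in the algebraic identification of $\dfrac{u}{1-u}$ with $a\,\ca(a,b,t)$: the paper reads this off directly from the functional equation $u=at+bt\dfrac{u}{1-u}$, which gives $\dfrac{u}{1-u}=\dfrac{u-at}{bt}$, and then matches $u-at$ with $abt\,\ca$ from the explicit closed forms; your route via the shared quadratic is a clean alternative. Your reflection argument for the second identity is a nice touch---the paper simply declares it to be the same computation with $a$ and $b$ interchanged.
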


\begin{proof}
This is the same kind of computation with generating functions as in the
previous lemma. Let $h_n=f^{\overbrace{+ \dots +}^n -} (x)$. Then
\begin{equation}
\begin{array}{rcl}
H(t,x) &=& \displaystyle \sum_{n\geq 1}h_n(x)t^n \\
 &=& \displaystyle \sigma_-(x) \int_\RR P(a,b,y,t)e^{-|y|}\sigma_+(y)
 (f^+(x-y)+f^-(x-y))d y \\
 &=& \displaystyle \sigma_-(x)\int_{\RR^+}P(a,b,y,t)e^{-y} 2b e^{x-y}
 d y \\
&=& \displaystyle 2b\sigma_-(x)e^x \int_{\RR^+}2u(t)e^{(2u(t)-2)y} dy
\\
&=& \displaystyle f^-(x)\frac{u(t)}{1-u(t)} \\
&=& \displaystyle\frac{1}{bt}(u(t)-at)f^-(x)\\
&=& a \tmop{ca}(a,b,t)f^-(x)
\end{array}
\end{equation}
\end{proof}

Roughly speaking, when there is a change of sign, as in
$f^{\overbrace{+ \dots +}^n -} (x)$, we recover, up to a scalar, the
initial function. Theorem~\ref{thcat} follows easily from the
previous lemma: for $n\geq 1$, the coefficients of $D_{a,b}^{n+1}$ in
the basis $R_{\tmmathbf{\varepsilon}\bullet}$ are given by
\begin{equation}
\frac{1}{2}f^{\varepsilon_1,\dots,\varepsilon_n,+}(0)
\end{equation}
If $\varepsilon_1, \dots, \varepsilon_n$ is decomposed into stacks of
  identical signs
\begin{equation} 
\tmmathbf{\varepsilon}= \varepsilon_1, \dots, \varepsilon_n =
     (\eta_1)^{n_1} \dots (\eta_s)^{n_s} 
\end{equation}
  with $\eta_i \not= \eta_{i + 1}$ and $n_1 + \dots + n_s = n$ and
  $\eta_s=+$, then
\begin{equation}
f^{\varepsilon_1,\dots,\varepsilon_n,+}(0)
 = \left( \prod_{\gf{\eta_i = +}{i<s}} a \right)
   \left( \prod_{\gf{\eta_i = -}{i<s}} b \right)
   \tmop{ca}_{n_1} (a, b) \dots
   \tmop{ca}_{n_{s - 1}} (a, b) P_{n_s+1} (a, b, 0)
\end{equation}
and $P_{n_s+1} (a, b, 0)=2ab\, \ca_{n_s}(a,b)$. 
If $\varepsilon_1,\dots,\varepsilon_n$ is decomposed into stacks of identical
signs,
\begin{equation}
\tmmathbf{\varepsilon}= \varepsilon_1, \dots, \varepsilon_n =
     (\eta_1)^{n_1} \dots (\eta_s)^{n_s},
\end{equation}
with $\eta_i \not= \eta_{i + 1}$ and $n_1 + \dots + n_s = n$ and
$\eta_s=-$, then
\begin{equation}
f^{\varepsilon_1,\dots,\varepsilon_n,+}(0)
 = \left( \prod_{\gf{\eta_i = +}{i<s}} a \right)
   \left( \prod_{\gf{\eta_i = -}{i<s}} b \right)
   \tmop{ca}_{n_1} (a, b) \dots
   \tmop{ca}_{n_{s - 1}} (a, b) \ca_{n_s} (a, b) b. 2a
\end{equation}
which ends the proof of the theorem.

%%%%%%%%%%%%%%%%%%%%%%%%%%%%%%%%%%%%%%%%%%%%%%%%%%%%%%%%%%%%%%%%%%%%%%%%%%%%%%%
%%%%%%%%%%%%%%%%%%%%%%%%%%%%%%%%%%%%%%%%%%%%%%%%%%%%%%%%%%%%%%%%%%%%%%%%%%%%%%%
%%%%%%%%%%%%%%%%%%%%%%%%%%%%%%%%%%%%%%%%%%%%%%%%%%%%%%%%%%%%%%%%%%%%%%%%%%%%%%%
\section{Alien calculus and noncommutative symmetric functions}\label{sec:alien}

We shall conclude this paper with a brief introduction to resummation theory, and
explain how $\Sym$ appears in this context as a Hopf algebra of
analytic continuation operators.  

Alien calculus provides a deep understanding of resummation schemes
that allow to interpret a formal power series as the asymptotic
expansion of a function.

Let $\tilde \varphi (z)\in \RR[[z^{-1}]]$ be a divergent
series of ``natural origin'': for instance, the formal solution of a
local analytic equation or system:
\begin{equation} E( \tilde \varphi ) =0\,.
\end{equation}
The  simplest real resummation scheme for $\tilde \varphi (z)$
goes like this:
\begin{equation}
\begin{array}{ccccc} \widetilde \varphi (z) & - & - & \rightarrow &
\varphi(z) \\ & \searrow & & \nearrow & \\ & & \hat \varphi (\zeta) &
&
\end{array}
\end{equation}
We begin by subjecting $\widetilde \varphi (z)$ to the formal Borel
transform (to obtain $ \hat \varphi (\zeta)$), which turns each monomial
$z^{-\sigma}$ into $\zeta^{\sigma-1} \Gamma(\sigma)$ ($\sigma>0$).
Under some growth condition on the coefficients of $\widetilde \varphi (z)$,
its Borel transform is a germ near $\zeta=0$ and it converges only for small
enough values of $\zeta$. 
If this germ can be analytically continued along $\RR^+$ then, under
some growth condition,  we can carry out a Laplace transform:

\begin{equation} \hat \varphi (\zeta) \longrightarrow \varphi(z) =
\int_{0}^{+\infty} e^{-z \zeta} \hat \varphi (\zeta) d \zeta
\end{equation} 
which converges for $\Re(z)>>0$, is real for
$z$ real, and whose asymptotic expansion is $\widetilde \varphi (z)$.
\bigskip

When it is possible, this procedure for turning a real formal object
$\widetilde \varphi (z)$ into a real geometric one $\varphi (z)$ is the
simplest one and preserves the product of functions. Unfortunately, the
analytic continuation of the germ $\hat \varphi (\zeta)$ often gives rise to
analytic singularities on the real axis, which prevents from carrying out the
Laplace transform. When this is the case, a careful analysis of the
singularities is needed. It is provided by Alien Calculus.

In many instances, the Borel transform of a  formal series
$\widetilde \varphi (z)$ lives in an algebra of functions whose product
reflects the product of formal power series. We will focus here on the
following algebra.

\begin{definition}
Let $\Res_\NN$ be the vector space of functions $\hat \varphi (\zeta)$
such that 
\begin{itemize}
\item $\hat \varphi (\zeta)$ is defined and holomorphic
  at the root of
$\RR^+$, that is, on a domain
\begin{equation}
S=\{0<|\zeta|<\varepsilon, |\arg \zeta |<\theta\}.
\end{equation}

\item $\hat \varphi (\zeta)$ is analytically continuable along any
path that follows $\RR^+$ and dodges each point of 
$\NN^*$ to the left or to the right, but without ever going back.
\item All the determinations of $\hat \varphi (\zeta)$ are locally
integrable on $\RR^+$.
\end{itemize}
\end{definition}

This space is an algebra for the convolution product :
\begin{equation} \hat \varphi_3(\zeta) =(\hat \varphi_1 \ast \hat
\varphi_2)(\zeta)=\int_0^{\zeta} \hat \varphi_1(\zeta_1) \, \hat
\varphi_2(\zeta - \zeta_1) \, d \zeta_1 \; \; (0<\zeta < 1)
\end{equation}
where $\hat \varphi_1,\hat \varphi_2 \in \Res_\NN$.

Note that this expression is purely local (at $\zeta = 0$) so that the germ
$\hat \varphi_3(\zeta)$ must then be extended, by analytic continuation, to
a global function. For details, see \cite{e1}.

We can {\it label} the different determinations of a function of $\Res_\NN$ as
follows.
Let $\hat \varphi (\zeta)\in\Res_\NN$ and
$(\varepsilon_1, \dots , \varepsilon_n)\in \mathcal{E}$ be
a sequence of $n$ plus or minus signs.
For $\zeta$ in $\rbrack n , n+1 \lbrack$,
we will denote by $\hat \varphi^{\varepsilon_1, \dots, \varepsilon_n} (\zeta)$
the analytic continuation of $ \hat \varphi$ from $0$ to $\zeta$ along the
path that follows $\mbox{\bf R}^+$ and dodges each singularity $k$
(with $1\leq k\leq n$) to the {\it right} (resp. to the {\it left})
if $\varepsilon_k = +$ (resp. $\varepsilon_k = -$).  

For example, if $\zeta \in \rbrack 4, 5 \lbrack $,
then $\hat \varphi^{+,-,-,+}(\zeta)$ is the analytic continuation of
$\hat \varphi$ along the following path:

\begin{center} \unitlength=1mm
\begin{picture}(120,30) \put(5,15){\makebox(0,0){$\bullet$}}
\put(115,15){\vector(1,0){0}} \put(25,15){\makebox(0,0){$\bullet$}}
\put(45,15){\makebox(0,0){$\bullet$}}
\put(65,15){\makebox(0,0){$\bullet$}}
\put(85,15){\makebox(0,0){$\bullet$}}
\put(105,15){\makebox(0,0){$\bullet$}}
\bezier{50}(5,15)(60,15)(115,15) \put(5,15){\line(1,0){18}}
\put(27,15){\line(1,0){16}} \put(47,15){\line(1,0){16}}
\put(67,15){\line(1,0){16}} \put(87,15){\line(1,0){8}}
\put(25,15){\oval(4,4)[b]} \put(45,15){\oval(4,4)[t]}
\put(65,15){\oval(4,4)[t]} \put(85,15){\oval(4,4)[b]}
\put(95,14){\line(0,1){2}} \put(95,12){\makebox(0,0){$\zeta$}}
\put(5,12){\makebox(0,0){0}}
\end{picture}
\end{center}

Of course, $\hat \varphi^{\emptyset}(\zeta)$ ($0 < \zeta <1$) is the
unique determination of $\hat \varphi$ on $\rbrack 0,1 \lbrack$ and, for any integer $n$, a function
$\hat \varphi $ of $\Res_\NN$
has $2^n$ possibly different determinations $\hat
\varphi^{\varepsilon_1, \dots , \varepsilon_n} (\zeta)$ {\it over} the
interval $\rbrack n , n+1 \lbrack$.

There exists an algebra of operators (alien operators) which allows to analyse the
singularities of such functions.

For $\mathbf{\varepsilon} \in \mathcal{E}$, the endomorphism
$D_{\tmmathbf{\varepsilon} \bullet}$ of $\Res_\NN$ is defined as follows.
For $\hat{\varphi} \in \Res_\NN$ and  $\zeta \in] 0, 1 [$,
\begin{equation}
\hat{\psi} (\zeta) =
(D_{\tmmathbf{\varepsilon} \bullet} \hat{\varphi})(\zeta) =
\hat{\varphi}^{\tmmathbf{\varepsilon} +} (\zeta + l (
\tmmathbf{\varepsilon} \bullet)) - \hat{\varphi}^{\tmmathbf{\varepsilon}
-} (\zeta + l ( \tmmathbf{\varepsilon} \bullet))
\end{equation}
where $l ( \tmmathbf{\varepsilon} \bullet) = n$
if $\tmmathbf{\varepsilon} = (\varepsilon_1, \dots, \varepsilon_{n -
  1})$.
We also denote by $D_{\emptyset}$ the identity map on $\Res_\NN$.

It follows from the definition that the composition of such operators
is given by 
\begin{equation}
  \forall ( \tmmathbf{a}, \tmmathbf{b}) \in \mathcal{E}^2, \quad
  D_{\tmmathbf{a \bullet}} D_{\tmmathbf{b \bullet}} = D_{\tmmathbf{b} +
  \tmmathbf{a} \bullet} - D_{\tmmathbf{b} - \tmmathbf{a} \bullet}\,,
\end{equation}
which is reminescent of (\ref{prodR}),
and there is a natural gradation $\nu$ on these operators defined by $\nu
(D_{\emptyset}) = 0$ ($l (\emptyset) = 0$) and $\nu (D_{\tmmathbf{\varepsilon}
\bullet}) = l ( \tmmathbf{\varepsilon} \bullet)$.

The fundamental theorem is
\begin{theorem}
The graded algebra of alien operators
\begin{equation}
{\bf Alien}=\oplus_{n \geq 0}
    \tmop{Vect}_{\QQ} \{D_{\tmmathbf{\varepsilon \bullet}}
    \hspace{1em} ; \hspace{1em} l ( \tmmathbf{\varepsilon} \bullet) = n\}
  \end{equation}
is a Hopf algebra (with basis $D_{\tmmathbf{\varepsilon
    \bullet}}$) for the coproduct induced by the convolution:
\begin{equation}
  {\bf Op} ( \hat{\varphi} \ast \hat{\psi})  = \sum
  {\bf Op}_{(1)} ( \hat{\varphi}) \ast {\bf Op}_{(2)}
  ( \hat{\psi}) 
\quad
\text{for ${\bf Op}\in {\bf Alien}$ and  $( \hat{\varphi}, \hat{\psi}) \in (\Res_\NN)^2$.}
\label{cop}
\end{equation}
\end{theorem}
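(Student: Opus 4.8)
The plan is to derive all the Hopf axioms from the two compatible structures carried by $\Res_\NN$---composition of alien operators, which gives the product, and the convolution $*$, which induces the coproduct---while identifying the underlying algebra with $\Sym$.

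First I would fix the algebra structure. Composition of operators is automatically associative, and since the $D_{\tmmathbf{\varepsilon}\bullet}$ are declared to be a basis, the displayed rule $D_{\tmmathbf{a}\bullet}D_{\tmmathbf{b}\bullet}=D_{\tmmathbf{b}+\tmmathbf{a}\bullet}-D_{\tmmathbf{b}-\tmmathbf{a}\bullet}$ shows both that composition preserves the span and that ${\bf Alien}$ is graded for $\nu$. Comparing this rule with the signed ribbon product $R_{\tmmathbf{a}\bullet}R_{\tmmathbf{b}\bullet}=R_{\tmmathbf{a}+\tmmathbf{b}\bullet}-R_{\tmmathbf{a}-\tmmathbf{b}\bullet}$, I would observe that $\Phi:R_{\tmmathbf{\varepsilon}\bullet}\mapsto D_{\tmmathbf{\varepsilon}\bullet}$ interchanges the two factors, hence is an algebra anti-isomorphism $\Sym\to{\bf Alien}$; in particular ${\bf Alien}$ is free associative, graded and connected, with generator $D_{+^{n-1}\bullet}$ in degree $n$ and $\nu(D_\emptyset)=0$.

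Next I would produce the coproduct. By definition $\Delta\,{\bf Op}=\sum{\bf Op}_{(1)}\otimes{\bf Op}_{(2)}$ must satisfy ${\bf Op}(\hat\varphi*\hat\psi)=\sum{\bf Op}_{(1)}(\hat\varphi)*{\bf Op}_{(2)}(\hat\psi)$, so the real task---and the main obstacle---is to show that this relation determines a (necessarily unique) element of ${\bf Alien}\otimes{\bf Alien}$. This is the analytic core: over $]k,k+1[$ the singularities of $\hat\varphi*\hat\psi$ come only from pairs of singularities of the factors sitting at $j$ and $k-j$, so continuing the convolution integral along a path with prescribed signs $\tmmathbf{\varepsilon}$ means deforming the $\zeta_1$-contour past these points, and I would track how that deformation distributes over the two factors. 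The outcome should express $D_{\tmmathbf{\varepsilon}\bullet}(\hat\varphi*\hat\psi)$ as a finite sum $\sum (D'\hat\varphi)*(D''\hat\psi)$ with $D',D''\in{\bf Alien}$, the degrees adding ($\nu(D')+\nu(D'')=\nu(D_{\tmmathbf{\varepsilon}\bullet})$) so that $\Delta$ respects $\nu$; this is the computation sketched in Section~\ref{sec:coprod}, where it also matches $\Delta$ with the coproduct of $\Sym$ transported through $\Phi$.

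Finally, the remaining axioms I expect to be formal. Coassociativity would follow from the associativity of $*$, by applying ${\bf Op}$ to both bracketings of $\hat\varphi*\hat\psi*\hat\chi$ and using the defining relation twice to get $(\Delta\otimes\mathrm{id})\Delta=(\mathrm{id}\otimes\Delta)\Delta$. Multiplicativity of $\Delta$ would come the same way, by expanding $({\bf Op}\,{\bf Op}')(\hat\varphi*\hat\psi)$, applying ${\bf Op}'$ then ${\bf Op}$, to obtain $\Delta({\bf Op}\,{\bf Op}')=\Delta({\bf Op})\Delta({\bf Op}')$. Taking the counit to be the projection onto $\QQ D_\emptyset$, counitality is forced by the fact that $\Delta$ respects $\nu$ and that $D_\emptyset$ is the only basis element in degree zero; since a graded connected bialgebra has a unique antipode defined by the standard recursion, ${\bf Alien}$ is a Hopf algebra, and $\Phi$ is then an (anti-)isomorphism of Hopf algebras onto $\Sym$.
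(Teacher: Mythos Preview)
Your outline matches the paper's own treatment quite closely: the paper itself does not prove this theorem in full but cites Ecalle's original work and Sauzin's exposition, and then summarizes exactly the three ingredients you isolate---the algebra structure via the composition rule, the identification with $\Sym$ (the paper uses the sequence-reversing isomorphism $D_{\varepsilon_1,\dots,\varepsilon_k\bullet}\leftrightarrow R_{\varepsilon_k,\dots,\varepsilon_1\bullet}$ rather than your anti-isomorphism, but these are equivalent), and the path-deformation argument of Section~\ref{sec:coprod} for the coproduct.

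There is, however, a genuine gap in your argument: you write ``since the $D_{\tmmathbf{\varepsilon}\bullet}$ are declared to be a basis'' and proceed as if linear independence were part of the hypotheses. It is not---it is part of the conclusion, and the paper explicitly flags it as ``not so simple to prove''. The point is that these are endomorphisms of $\Res_\NN$, and there is no a~priori reason why distinct sign sequences could not yield linearly dependent operators. Ecalle's argument constructs, for any nontrivial linear combination of the $D_{\tmmathbf{\varepsilon}\bullet}$, an explicit resurgent function on which that combination acts nontrivially; this is also what guarantees that the coproduct you extract from the convolution identity is \emph{unique} (otherwise the relation ${\bf Op}(\hat\varphi*\hat\psi)=\sum{\bf Op}_{(1)}(\hat\varphi)*{\bf Op}_{(2)}(\hat\psi)$ would not determine the tensor $\sum{\bf Op}_{(1)}\otimes{\bf Op}_{(2)}$). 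Without this step, neither the freeness of ${\bf Alien}$ nor the well-definedness of $\Delta$ is established, and your transport-of-structure argument through $\Phi$ does not get off the ground.
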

 
The proof of this nontrivial result can be found in \cite{et1} and is
also clearly illustrated in \cite{sauzin}. 
It follows from a careful combinatorial and analytic study of the analytic
continuation of functions of $\Res_\NN$, 
which can be found in \cite{et1}. We shall only summarize some key points.

\begin{itemize}
\item[--] It is not so simple to prove that the $D_{\tmmathbf{\varepsilon
    \bullet}}$ are free. Roughly speaking, this was proved by Ecalle,
using the fact that for any linear combination of such operators, there exists
a function in  $\Res_\NN$ which is not annihilated by the action of
this linear combination.
\item[--] The construction of such functions involves the use of some
  specific elements  of ${\bf Alien}$ such as :
\begin{equation}
\begin{array}{ccc}
  \Delta^+_n & = & D_{\underbrace{+ \dots +}_{n-1} \bullet}\\
  \Delta^-_n & = & - D_{\underbrace{- \dots -}_{n-1} \bullet}\\
  \Delta_n & = &
  \displaystyle \sum_{\tmmathbf{\varepsilon} \in \mathcal{E}_{n - 1}}
  \lambda^{\tmmathbf{\varepsilon}} D_{\tmmathbf{\varepsilon \bullet}}
\end{array} \label{5}
\end{equation} 
where $\lambda^{\tmmathbf{\varepsilon}} = \frac{p!q!}{(p + q + 1) !}$
with $p$ (resp. $q$) the number of plus (resp. minus) signs in
$\tmmathbf{\varepsilon}$.
It happens that each of the three families above is a family of generators.
\item[--] The existence of a coproduct $\delta$ is proved in {\cite{et1}}. The
main idea is that $\hat{\varphi} \ast \hat{\psi}$ is defined by a path
integral in the neighbourhood of $0$.
To compute $\tmmathbf{\tmop{Op}} ( \hat{\varphi} \ast \hat{\psi})$,
the analytic continuations of $\hat{\varphi} \ast \hat{\psi}$ must be known.
But, once again, these analytic continuations can be defined as path integrals
on ``self-symmetric shrinkable paths'' (see {\cite{e1}}) and a careful
decomposition of such paths (with respect to the involved analytic
continuations of $\hat{\varphi}$ and $\hat{\psi}$) yields formula~(\ref{cop}).
Indeed, we get, for $n\geq 0$,
\begin{equation}
\delta(\Delta^+_n)=\sum_{k=0}^n \Delta^+_k \otimes \Delta^+_{n-k}\,.
\end{equation}
\end{itemize}

Given these properties of ${\bf Alien}$, it is now clear that it is
isomorphic to $\Sym$, under the identification of $S_n$ and
$\Delta^+_n$. Under this isomorphism,
$D_{\varepsilon_1,..,\varepsilon_k\bullet}$ is associated with
$R_{\varepsilon_k,..,\varepsilon_1\bullet}$ (note the reversion of the
sequence, corresponding to the anti-involution in $\Sym$). To
summarize, if $\alpha$ is the isomorphism, then
\begin{equation}
\begin{array}{rcl}
\alpha(\Delta^+_n) &=& S_n \\
\alpha(\Delta^-_n) &=& (-1)^n \Lambda_n \\
\alpha(\Delta_n) &=&  \frac{\Phi_n}{n} 
\end{array}
\end{equation} 

We have thus an explicit correspondence, and both worlds of resurgence and 
noncommutative symmetric functions can now
interact. Especially any grouplike (resp. primitive) element of
${\bf Alien}$ (or $\Sym$) provides a grouplike (resp. primitive)
element of $\Sym$ (or ${\bf Alien}$).

For example, the iterated integrals introduced in this paper were
used in the framework of real resummation.

Let us go back to the above resummation scheme and assume that we are dealing
with a formal power series $\widetilde{\varphi}$ whose Borel transform
$\hat\varphi$ is in $\Res_\NN$. In order to perfom the Laplace transform along
$\RR^+$, we need to {\it uniformize} the resurgent function. This can be done
by averaging, above each interval $] n, n+1[$ the $2^n$ analytic continuations
of $\hat\varphi$, but with many analytic and algebraic constraints, {\it
e.g.}, this averaging must preserve the algebra structure but also provide a
function whose Laplace transform on $\RR^+$ converges (see \cite{e1,em1}). 

Since there are $2^n$ determinations of $\hat\varphi$ labelled
by sequences $\varepsilon_1,..,\varepsilon_n$ of signs, we can now
understand the origin of the coefficients $m^{\mathbf{\varepsilon}}$
introduced in Section~\ref{moyennes}. Indeed, if such weights are
given by the probabilities of some random walk, they fulfill all the
algebraic and analytic properties required in real resummation theory.

%%%%%%%%%%%%%%%%%%%%%%%%%%%%%%%%%%%%%%%%%%%%%%%%%%%%%%%%%%%%%%%%%%%%%%%%%%%%%%%
%%%%%%%%%%%%%%%%%%%%%%%%%%%%%%%%%%%%%%%%%%%%%%%%%%%%%%%%%%%%%%%%%%%%%%%%%%%%%%%
%%%%%%%%%%%%%%%%%%%%%%%%%%%%%%%%%%%%%%%%%%%%%%%%%%%%%%%%%%%%%%%%%%%%%%%%%%%%%%%
\section{Complements on the coproduct of {\bf Alien}.}\label{sec:coprod}

Thanks to the product in {\bf Alien}, it can be identified (as an algebra) to
$\Sym$, and it remains to understand why the coproduct of 
\begin{equation}
\Delta_n^+=D_{\underbrace{+ \dots +}_{n-1} \bullet}
\end{equation} corresponds
to the coproduct of $S_n$, that is
\begin{equation}
\delta(\Delta^+_n)=\sum_{k=0}^n \Delta^+_k \otimes \Delta^+_{n-k}.
\end{equation}
Going back to the Laplace transform, let us illustrate how this
coproduct appears.

%%%%%%%%%%%%%%%%%%%%%%%%%%%%%%%%%%%%%%%%%%%%%%%%%%%%%%%%%%%%%%%%%%%%%%%%%%%%%%%
\subsection{The Laplace Transform}

Assuming that all integrals are well-defined and
convergent for $z$ large enough, let us consider, for a given sign
$\varepsilon=\pm$ the Laplace transform on a half-line going from
$0$ to
infinity ``on the same side as 
$\varepsilon$'', precisely in the direction
$\arg(\zeta)=-\varepsilon\cdot\alpha$ where $\alpha>0$ is small
enough:
\begin{center}
\begin{tikzpicture}[scale=1.5]
\draw[dotted,->](0,0)--(6,0);
\foreach\i in{0,1,...,5}{\draw[fill](\i,0) circle (0.02);};
\node[below](z)at(0,0){$0$};

\draw[thick,->](0,0) -- (5.775,-1);
\node[font=\Large](lp)at(4,-1){$\mathcal{L}^+$};

\foreach\i in{0,1,...,5}{\draw[fill](\i,0) circle (0.02);};
\node[below](z)at(0,0){$0$};
\draw[thick,->](0,0) -- (5.775,1);
\node[font=\Large](lm)at(4,1){$\mathcal{L}^-$};

\end{tikzpicture}
\end{center}
Thanks to the definition of the convolution in $\Res_\NN$ (and to
Fubini's theorem),
\begin{equation}
\begin{array}{rcl}
\mathcal{L}^+(\hat \varphi_1 \ast \hat
\varphi_2)(z) &=& \displaystyle \int_0^{e^{-i\alpha}\infty=\infty^+}(\hat \varphi_1 \ast \hat
\varphi_2)(\zeta) e^{-z \zeta} d\zeta\\
 &=& \displaystyle \int_0^{\infty^+} \int_0^{\zeta}\hat
 \varphi_1(\zeta_1) \hat
\varphi_2(\zeta-\zeta_1) d\zeta_1 e^{-z \zeta} d\zeta \\
&=& \displaystyle \int_0^{\infty^+} \int_0^{\zeta}\hat
 \varphi_1(\zeta_1) e^{-z \zeta _1}\hat
\varphi_2(\zeta-\zeta_1) d\zeta_1 e^{-z (\zeta-\zeta_1)} d\zeta \\
&=& \displaystyle \left (\int_0^{\infty^+} \hat
 \varphi_1(\zeta_1) e^{-z \zeta _1} d\zeta_1\right )\left
 (\int_0^{\infty^+} \hat
\varphi_2(\zeta_2)  e^{-z \zeta_2} d\zeta_2 \right ) 
%\footnote{(with$\zeta_2=\zeta-\zeta_1$)}  
\\
&=& \mathcal{L}^+(\hat \varphi_1)\, \cdot\, \mathcal{L}^+(\hat \varphi_2)
\end{array}
\end{equation}
and the same holds for $\mathcal{L}^-$.

In order to compare these two Laplace transforms let us try to deform
the path defining $\mathcal{L}^+$ so that it goes to infinity in the
upper half plane. If we push the path without going through the
singularities in $\NN^{\ast}$, then, thanks to the Cauchy integral
theorem, the function obtained after summation remains the same. For
example, in the following picture :
\begin{center}
\begin{tikzpicture}[scale=2]
\draw[dotted,->](0,0)--(6,0);

\foreach\i in{0,1,...,5}{\draw[fill](\i,0) circle (0.02);};
\node[below](z)at(0,0){$0$};
\draw[thick,->](0,0) -- (5.775,-2);
\node[font=\Large](lp)at(4,-2){$\mathcal{L}^+$};
\node(1)at(5.775,-1.8){$\gamma_1$};
\draw(0,0)--(0.75,0);
\draw(2.25,0)--+(.5,0);
\draw(3.25,0)--+(.5,0);
\draw(4.25,0)--+(.5,0);
\draw[->](5.25,0)--+(.5,0);
\draw(1.25,0)--+(.5,0);
\node(2)at(5.6,-.2){$\gamma_2$};
\draw(0.75,0) arc (-180:0:0.25);
\draw(1.75,0) arc (-180:0:0.25);
\draw(2.75,0) arc (-180:0:0.25);
\draw(3.75,0) arc (-180:0:0.25);
\draw(4.75,0) arc (-180:0:0.25);

\draw[dotted,->](5,-1.5)--(5,-0.5);
\draw[dotted,->](3,1)--+(1.6,0.8);
\draw[rounded
corners=4pt,->](0,0)--(1.8,0.9)--(2,0.8)--(0.8,0)--(1,-.125)--(2.8,0.9)--(3,0.8)--(1.775,0)--(2,-.125)--(3.8,0.9)--(4,0.8)--(2.775,0)--(3,-.125)--(4.8,0.9)--(5,0.8)--(3.775,0)--(4,-.125)--(5.8,0.9)--(6,0.8)--(4.775,0)--(5,-.125)--(5.775,.5);
\node(3)at(6,.5){$\gamma_3$};
\node[font=\tiny](m)at(1.5,.6){$-$};
\node[font=\tiny](pm)at(2.5,.6){$+-$};
\node[font=\tiny](ppm)at(3.5,.6){$++-$};
\node[font=\tiny](pppm)at(4.5,.6){$+++-$};
\end{tikzpicture}
\end{center}
we have
\begin{equation}
\mathcal{L}^+(\hat \varphi)(z)=\int_{\gamma_1}\hat
\varphi(\zeta)e^{-z\zeta}d\zeta=\int_{\gamma_2}\hat
\varphi(\zeta)e^{-z\zeta}d\zeta=\int_{\gamma_3}\hat
\varphi(\zeta)e^{-z\zeta}d\zeta\,.
\end{equation}
We have written on the path $\gamma_3$ the determinations of $\hat
\varphi$ that are involved when integrating along $\gamma_3$. If we
stretch this path to infinity in the direction
$e^{+i\alpha}\infty=\infty^-$, then we get the following picture:
\begin{center}
\begin{tikzpicture}
\draw[dotted,->](0,0)--(6,0);

\foreach\i in{0,1,...,5}{\draw[fill](\i,0) circle (0.02);};
\node[below](z)at(0,0){$0$};
\draw[->](0,0) -- (6,3);
\draw[>->](6,2.7)--(1,0.2) arc (-270:0:0.2) --(6,2.4);
\draw[>->](6,2.2)--(2,0.2) arc (-270:0:0.2) --(6,1.9);
\draw[>->](6,1.7)--(3,0.2) arc (-270:0:0.2) --(6,1.4);
\draw[>->](6,1.2)--(4,0.2) arc (-270:0:0.2) --(6,.9);
\draw[>->](6,0.7)--(5,0.2) arc (-270:0:0.2) --(6,0.4);

\node[font=\tiny](m)at(1.5,.6){$\scriptscriptstyle -$};
\node[font=\tiny](pm)at(2.6,.6){$\scriptscriptstyle +-$};
\node[font=\tiny](ppm)at(3.6,.6){$\scriptscriptstyle++-$};
\node[font=\tiny](pppm)at(4.6,.6){$\scriptscriptstyle+++-$};

\end{tikzpicture}
\end{center}

The first path on the half-line from $0$ corresponds to
$\mathcal{L}^-(\hat \varphi)$. For the second integral $I_1$ (that goes around
$1$), since the functions are integrable at the singularities, the
circle around 1 can be shrinked and then, it is clear that one
integrates from $\infty^-$ to $1$ the determination $\varphi^-$ of
$\varphi$ then from $1$ to $\infty^-$ the determination $\varphi^+$ of
$\varphi$:
\begin{equation}
I_1(\hat \varphi)=\int_1^{\infty^-}(\hat \varphi^+ - \hat
\varphi^-)(\zeta)e^{-z\zeta} d\zeta\,.
\end{equation}
Changing $\zeta$ into $\zeta -1$, we get
\begin{equation}
I_1(\hat \varphi)=e^{-z}\int_0^{\infty^-}(\hat \varphi^+ - \hat
\varphi^-)(\zeta+1)e^{-z\zeta} d\zeta=e^{-z}
\mathcal{L}^-(\Delta^+_1\hat \varphi)(z)\,.
\end{equation}
In the same way, for the path that goes around $2$, the
corresponding integral is
\begin{equation}
\begin{array}{rcl}
I_2(\hat \varphi) &=&\displaystyle  \int_2^{\infty^-}(\hat \varphi^{++} - \hat
\varphi^{+-})(\zeta)e^{-z\zeta} d\zeta \\
&=& \displaystyle  e^{-2z}\int_0^{\infty^-}(\hat \varphi^{++} - \hat
\varphi^{+-})(\zeta+2)e^{-z\zeta} d\zeta \\
&=&\displaystyle   e^{-2z}
\mathcal{L}^-(\Delta^+_2\hat \varphi)(z)\,.
\end{array}
\end{equation}
Finally, we get
\begin{equation}
\mathcal{L}^+(\hat{\varphi})=\mathcal{L}^-(\hat{\varphi})+\sum_{k\geq
  1}e^{-k z} \mathcal{L}^-(\Delta_k^+(\hat{\varphi}))\,.
\end{equation}
If we combine this with the action of the Laplace transforms
$\mathcal{L}^+$ and $\mathcal{L}^-$, then 
\begin{equation}
\begin{array}{rcl}
\mathcal{L}^+(\hat \varphi_1 \ast \hat
\varphi_2) & = & \displaystyle \mathcal{L}^-(\hat \varphi_1 \ast \hat
\varphi_2) +\sum_{n\geq
  1}e^{-n z} \mathcal{L}^-(\Delta_n^+(\hat \varphi_1 \ast \hat
\varphi_2)) \\
&=& \displaystyle \mathcal{L}^+(\hat \varphi_1).\mathcal{L}^+(\hat
\varphi_2) \\
 &=& \displaystyle \left (\mathcal{L}^-(\hat \varphi_1 ) +\sum_{k\geq
  1}e^{-k z} \mathcal{L}^-(\Delta_k^+(\hat \varphi_1 )) \right ) \\
 && \times \displaystyle \left (\mathcal{L}^-(\hat \varphi_2 ) +\sum_{l\geq
  1}e^{-l z} \mathcal{L}^-(\Delta_l^+(\hat \varphi_2 )) \right ) \\
&=& \displaystyle \mathcal{L}^-(\hat \varphi_1 \ast \hat
\varphi_2) +\sum_{n\geq
  1}e^{-n z} \sum_{k+l=n}\mathcal{L}^- (\Delta_k^+(\hat \varphi_1)
\ast \Delta_l^+ \hat
\varphi_2)) 
\end{array}
\end{equation}
and  the coefficient of   $e^{-n z}$ is precisely
given by the proposed coproduct formula for
$\Delta^+_n$. 
The actual proof of the existence of  this coproduct is also based on path
deformation.  We will illustrate it in the following subsection.

%%%%%%%%%%%%%%%%%%%%%%%%%%%%%%%%%%%%%%%%%%%%%%%%%%%%%%%%%%%%%%%%%%%%%%%%%%%%%%%
\subsection{Path deformation and coproduct}

In the definition of $\Res_\NN$, the convolution was defined in the
neighbourhood of $0$ by the path integral: 
\begin{equation}
\hat \varphi_3(\zeta) =(\hat \varphi_1 \ast \hat
\varphi_2)(\zeta)=\int_0^{\zeta} \hat \varphi_1(\zeta_1) \, \hat
\varphi_2(\zeta - \zeta_1) \, d \zeta_1 \; \; (0<\zeta < 1)\,,
\end{equation}
where $\hat \varphi_1,\hat \varphi_2 \in \Res_\NN$

In order to let $\Delta_n^+$ act on the convolution product, the germ $\hat
\varphi_3(\zeta)$ must be extended by analytic continuation, and, 
as this germ is defined as a path integral, the continuation of the
germ is obtained by deformation of the path defining the
convolution. But this deformation must be done carefully since one has
to avoid the singularities of $\hat \varphi_1(\zeta_1)$ but also the
singularities of $\hat
\varphi_2(\zeta - \zeta_1)$, namely the set $\{\zeta -n,n\geq 1 \}$. 
Morever, in order to respect the
commutativity of the convolution product, we have to take a self-symmetric path of
analytic continuation from $0$ to $\zeta$, that is path such that, if
$\zeta_1$ is on the path, $\zeta - \zeta_1$ is also on the path. 

In order to do so, we can apply the following procedure (see
\cite{et1}): Starting 
from $\zeta$ near $0$, we deform the path to get the attempted
analytic continuation, without going through the singularities in
$\NN^{\ast}$ {\it and} $\{\zeta -n,n\geq 1 \}$. So we draw, 
these sets 
for a given $\zeta$, 
and try to deform the path.
For example, let us compute
\begin{equation}
\Delta_2(\hat \varphi_3)(\zeta)= (\hat \varphi_3^{++} -   \hat
\varphi_3^{+-})(\zeta+2)\,.
\end{equation}
To do so, we need to know $\hat \varphi_3^{++}(\zeta)$ and $\hat
\varphi_3^{+-}(\zeta)$ for $\zeta \in]2,3[$.
Assuming $\hat \varphi_3$ in $Res_{\NN}$, $\hat \varphi_3^{++}$
is obtained by deformation of paths, starting from $\zeta$ near $0$:
\begin{center}
\begin{tikzpicture}[scale=2.5]
\draw[dotted,->](0,0)--(4,0);

\foreach\i in{0,1,...,3}{\draw[fill](\i,0) circle (0.02);};
\node[above](0)at(0,0){$0$};
\node[above](1)at(1,0){$1$};
\node[above](2)at(2,0){$2$};
\node[above](3)at(3,0){$3$};
\draw[-|](0,0)--(1,-0.5);
\node[right](z)at(1,-0.5){$\zeta$};
\draw[dashed,->](1.3,-0.5)--(2.3,-0.5);
\end{tikzpicture}

\begin{tikzpicture}[scale=2.5]
\draw[dotted,->](0,0)--(4,0);

\foreach\i in{0,1,...,3}{\draw[fill](\i,0) circle (0.02);};
\node[above](0)at(0,0){$0$};
\node[above](1)at(1,0){$1$};
\node[above](2)at(2,0){$2$};
\node[above](3)at(3,0){$3$};
\draw[-|](0,0)--(2,-0.5);
\node[right](z)at(2,-0.5){$\zeta$};
\draw[dashed,->](2,-0.4)--(2.3,-0.2);
\end{tikzpicture}

\begin{tikzpicture}[scale=2.5]
\draw[dotted,->](0,0)--(4,0);

\foreach\i in{0,1,...,3}{\draw[fill](\i,0) circle (0.02);};
\node[above](0)at(0,0){$0$};
\node[above](1)at(1,0){$1$};
\node[above](2)at(2,0){$2$};
\node[above](3)at(3,0){$3$};
\draw[-|](0,0)--(0.95,0) arc (-180:0:0.05) -- (1.1,0)--(1.95,0) arc
(-180:0:0.05) --(2.4,0);
\node[below](z)at(2.4,0){$\zeta$};

\end{tikzpicture}

\end{center}
Since $ \varphi_3(\zeta)$ is given by a convolution integral, we must deform 
the path of analytic continuation in a self-symmetric way and avoid the
singularities over $\NN^{\ast}$ and over their symmetrics $\zeta -\NN^{\ast}$.
If we draw these singularities, we get the following path of analytic
continuation, which  gives $ \varphi_3(\zeta)$ as an integral:

\begin{center}
\begin{tikzpicture}[scale=2.5]
\draw[dotted,->](0,0)--(4,0);

\foreach\i in{0,1,...,3}{\draw[fill](\i,0) circle (0.02);};
\node[above](0)at(0,0){$0$};
\node[above](1)at(1,0){$1$};
\node[above](2)at(2,0){$2$};
\node[above](3)at(3,0){$3$};
\draw[-|](0,0)--(1,-0.5);
\node[right](z)at(1,-0.5){$\zeta$};
\draw[dotted,->](1,-0.5)--(-0.5,-0.5);
\draw(0,-0.5) circle (0.02);
\node[below](z1)at(0,-0.5){$\zeta-1$};

\draw[dashed,->](1.3,-0.5)--(2.3,-0.5);
\end{tikzpicture}

\begin{tikzpicture}[scale=2.5]
\draw[dotted,->](0,0)--(4,0);

\foreach\i in{0,1,...,3}{\draw[fill](\i,0) circle (0.02);};
\node[above](0)at(0,0){$0$};
\node[above](1)at(1,0){$1$};
\node[above](2)at(2,0){$2$};
\node[above](3)at(3,0){$3$};
\draw[-|](0,0)--(2,-0.5);
\node[right](z)at(2,-0.5){$\zeta$};

\draw[dotted,->](2,-0.5)--(-0.5,-0.5);
\draw(0,-0.5) circle (0.02);
\draw(1,-0.5) circle (0.02);

\node[below](z1)at(1,-0.5){$\zeta-1$};
\node[below](z2)at(0,-0.5){$\zeta-2$};

\draw[dashed,->](2,-0.4)--(2.3,-0.2);
\end{tikzpicture}

\begin{tikzpicture}[scale=2.5]
\draw[dotted,->](0,0)--(4,0);

\foreach\i in{0,1,...,3}{\draw[fill](\i,0) circle (0.02);};
\node[above](0)at(0,0){$0$};
\node[above](1)at(1,0){$1$};
\node[above](2)at(2,0){$2$};
\node[above](3)at(3,0){$3$};
\draw[-|](0,0)--(0.35,0) arc (180:0:0.05) --(0.95,0) arc (-180:0:0.05)--(1.35,0) arc (180:0:0.05)  -- (1.4,0)--(1.95,0) arc
(-180:0:0.05) --(2.4,0);
\node[below](z)at(2.4,0){$\zeta$};

\draw(0.4,0) circle (0.02);
\draw(1.4,0) circle (0.02);

\node[below](z1)at(1.4,0){$\zeta-1$};
\node[below](z2)at(0.4,0){$\zeta-2$};
\end{tikzpicture}
\end{center}

For $\hat \varphi_3^{+-}(\zeta)$ ($\zeta \in]2,3[$),
the natural way to get $\hat \varphi_3^{++}$
is obtained by deformation, starting from $\zeta$ near $0$:

\begin{center}
\begin{tikzpicture}[scale=2.5]
\draw[dotted,->](0,0)--(4,0);

\foreach\i in{0,1,...,3}{\draw[fill](\i,0) circle (0.02);};
\node[above](0)at(0,0){$0$};
\node[above](1)at(1,0){$1$};
\node[above](2)at(2,0){$2$};
\node[above](3)at(3,0){$3$};
\draw[-|](0,0)--(1,-0.5);
\node[right](z)at(1,-0.5){$\zeta$};
\draw[dashed,->](1,-0.3)--(1.5,0.3);
\end{tikzpicture}

\begin{tikzpicture}[scale=2.5]
\draw[dotted,->](0,0)--(4,0);

\foreach\i in{0,1,...,3}{\draw[fill](\i,0) circle (0.02);};
\node[above](0)at(0,0){$0$};
\node[above](1)at(1,0){$1$};
\node[above](2)at(2,0){$2$};
\node[above](3)at(3,0){$3$};
\draw[-|](0,0)--(0.95,0) arc (-180:0:0.05)--(2,0.5);
\node[right](z)at(2,0.5){$\zeta$};
\draw[dashed,->](2,0.4)--(2.3,0.1);
\end{tikzpicture}

\begin{tikzpicture}[scale=2.5]
\draw[dotted,->](0,0)--(4,0);

\foreach\i in{0,1,...,3}{\draw[fill](\i,0) circle (0.02);};
\node[above](0)at(0,0){$0$};
\node[above](1)at(1,0){$1$};
\node[above](2)at(2,0){$2$};
\node[above](3)at(3,0){$3$};
\draw[-|](0,0)--(0.95,0) arc (-180:0:0.05) -- (1.1,0)--(1.95,0) arc
(180:0:0.05) --(2.4,0);
\node[below](z)at(2.4,0){$\zeta$};
\end{tikzpicture}
\end{center}
Once again, since $ \varphi_3(\zeta)$ is given by a convolution integral, we
must deform the path of analytic continuation in a self-symmetric way:

\begin{center}
\begin{tikzpicture}[scale=2.5]
\draw[dotted,->](0,0)--(4,0);

\foreach\i in{0,1,...,3}{\draw[fill](\i,0) circle (0.02);};
\node[above](0)at(0,0){$0$};
\node[above](1)at(1,0){$1$};
\node[above](2)at(2,0){$2$};
\node[above](3)at(3,0){$3$};
\draw[-|](0,0)--(1,-0.5);
\node[right](z)at(1,-0.5){$\zeta$};
\draw[dotted,->](1,-0.5)--(-0.5,-0.5);
\draw(0,-0.5) circle (0.02);
\node[below](z1)at(0,-0.5){$\zeta-1$};
\draw[dashed,->](1,-0.3)--(1.5,0.3);
\end{tikzpicture}

\begin{tikzpicture}[scale=2.5]
\draw[dotted,->](0,0)--(4,0);

\foreach\i in{0,1,...,3}{\draw[fill](\i,0) circle (0.02);};
\node[above](0)at(0,0){$0$};
\node[above](1)at(1,0){$1$};
\node[above](2)at(2,0){$2$};
\node[above](3)at(3,0){$3$};

\draw[-|](0,0)--(0.95,0.5) arc (180:0:0.05)--(0.95,0) arc (-180:0:0.05)--(2,0.5);
\node[right](z)at(2,0.5){$\zeta$};

\draw[dotted,->](2,0.5)--(-0.5,0.5);
\draw(0,0.5) circle (0.02);
\draw(1,0.5) circle (0.02);

\node[above](z1)at(1,0.5){$\zeta-1$};
\node[above](z2)at(0,0.5){$\zeta-2$};

\draw[dashed,->](2,0.4)--(2.3,0.1);
\end{tikzpicture}

\begin{tikzpicture}[scale=2.5]
\draw[dotted,->](0,0)--(4,0);

\foreach\i in{0,1,...,3}{\draw[fill](\i,0) circle (0.02);};
\node[above](0)at(0,0){$0$};
\node[above](1)at(1,0){$1$};
\node[above](2)at(2,0){$2$};
\node[above](3)at(3,0){$3$};
\draw[-|](0,0)--(0.35,0) arc (-180:0:0.05) --(0.90,0)
arc(180:10:0.1)-- (1.35,0.01) arc (170:-170:0.05)  -- (1.05,0) arc
(10:350:0.05)-- (1.3,-0.025) arc(-170:0:0.1) --(1.95,0) arc
(180:0:0.05) --(2.4,0);
\node[below](z)at(2.4,0){$\zeta$};

\draw(0.4,0) circle (0.02);
\draw(1.4,0) circle (0.02);

\node[below](z1)at(1.4,0){$\zeta-1$};
\node[below](z2)at(0.4,0){$\zeta-2$};
\end{tikzpicture}
\end{center}

For these symmetric paths, we can shrink the different
circles and, using the integral expression of $\hat \varphi_3$ when
$\zeta_1$ runs along the path, we can mark the determination of $\hat
\varphi_1 (\zeta_1)$. Since the path is symmetric, the
determination of $\hat \varphi_2(\zeta -\zeta_1)$ is given by
symmetry.
For $\hat \varphi_3^{++}$ the information is summarized in the following table

\begin{center}
\begin{tabular}{|c|ccccccccccc|}
\hline
$\zeta_1$ & $0$ & $\rightarrow$ & $\zeta-2$ & $\rightarrow$ & $1$  &
$\rightarrow$ & $\zeta-1$ &  $\rightarrow$ & $2$  & $\rightarrow$
& $\zeta$ \\
\hline
$\hat
\varphi_1 (\zeta_1)$ & \vline &$ \emptyset$ & \vline & $\emptyset$ & \vline & $+$ & \vline &
$+$ & \vline & $++$ & \vline \\
\hline
$\hat \varphi_2(\zeta -\zeta_1)$& \vline &$++$ & \vline & $+$ & \vline & $+$ & \vline &
$\emptyset$ & \vline & $\emptyset$ & \vline \\
\hline
\end{tabular}
\end{center}
and for $\hat \varphi_3^{+-}$: 
\begin{center}
\begin{tabular}{|c|ccccccccccccccc|}
\hline
$\zeta_1$ & $0$ & $\rightarrow$ & $\zeta-2$ & $\rightarrow$ & $1$  &
$\rightarrow$ & $\zeta-1$ &  $\rightarrow$ & $1$ &$\rightarrow$ &
$\zeta-1$ &$\rightarrow$ & $2$  & $\rightarrow$
& $\zeta$ \\
\hline
$\hat
\varphi_1 $ & \vline &$ \emptyset$ & \vline & $\emptyset$ & \vline & $-$ &
\vline & $-$ & \vline & $+$ & \vline &$+$ &\vline & $+-$ &\vline \\
\hline
$\hat \varphi_2$& \vline &$+-$ & \vline & $+$ & \vline & $+$ & \vline
& $-$ & \vline & $-$ & \vline & $\emptyset$ & \vline & $\emptyset$ & \vline \\
\hline
\end{tabular}
\end{center}
If we compute carefully the convolution integral defining the
difference 
\begin{equation}
(\hat
\varphi_3^{++} - \hat \varphi_3^{+-})(\zeta)
 = (\Delta_2^+\hat \varphi_3)(\zeta-2)\quad  (\zeta \in(2,3)),
\end{equation}
some cancellations occur and we get :
\begin{equation}
\begin{array}{rcl}
(\hat \varphi_3^{++} - \hat \varphi_3^{+-})(\zeta)
 &=& \displaystyle \int_0^{\zeta-2}
\hat \varphi_1^{\emptyset}(\zeta_1)(\hat \varphi_2^{++}-\hat
\varphi_2^{+-})(\zeta-\zeta_1) d\zeta_1 \\
 & & +\displaystyle \int_1^{\zeta-1}
(\hat \varphi_1^{+}-\hat \varphi_1^{-})(\zeta_1)(\hat \varphi_2^{+}-\hat
\varphi_2^{-})(\zeta-\zeta_1) d\zeta_1 \\
 & & +\displaystyle \int_2^{\zeta}
(\hat \varphi_1^{++}-\hat
\varphi_1^{+-})(\zeta_1)\hat \varphi_2^{\emptyset}((\zeta-\zeta_1) d\zeta_1\,.
\end{array}
\end{equation}
If $\zeta =\xi+2$ ($ \xi \in]0,1[$), then, by translation of the
variable in each integral, 

\begin{equation}
\begin{array}{rcl}
\Delta_2^+ (\hat \varphi_3)(\xi) &=& \Delta_2^+ (\hat \varphi_1 \ast
\hat \varphi_2)(\xi) \\
 &= &(\hat \varphi_3^{++} - \hat
\varphi_3^{+-})(\xi +2) \\
 &=& \displaystyle \int_0^{\xi}
\hat \varphi_1^{\emptyset}(\zeta_1)(\hat \varphi_2^{++}-\hat
\varphi_2^{+-})(\xi-\zeta_1+2) d\zeta_1 \\
 & & +\displaystyle \int_0^{\xi}
(\hat \varphi_1^{+}-\hat \varphi_1^{-})(\zeta_1+1)(\hat \varphi_2^{+}-\hat
\varphi_2^{-})(\xi-\zeta_1+1) d\zeta_1 \\
 & & +\displaystyle \int_0^{\xi}
(\hat \varphi_1^{++}-\hat
\varphi_1^{+-})(\zeta_1+2)\hat \varphi_2^{\emptyset}((\xi-\zeta_1) d\zeta_1\,.
\end{array}
\end{equation}
This is precisely the expected result since, near the origin, 
\begin{equation}
\Delta_2^+ (\hat \varphi_1 \ast
\hat \varphi_2)=\hat \varphi_1\ast (\Delta_2^+ \hat \varphi_2)+
(\Delta_1^+ \hat \varphi_1)\ast (\Delta_1^+ \hat \varphi_2)) +
(\Delta_2^+ \hat \varphi_1) \ast \hat \varphi_2\,.
\end{equation}

This way of computing the analytic continuations of a convolution product can
be shown to yield in all cases the claimed coproduct of the operators
$\Delta_{n}^+$.

%%%%%%%%%%%%%%%%%%%%%%%%%%%%%%%%%%%%%%%%%%%%%%%%%%%%%%%%%%%%%%%%%%%%%%%%%%%%%%%
%%%%%%%%%%%%%%%%%%%%%%%%%%%%%%%%%%%%%%%%%%%%%%%%%%%%%%%%%%%%%%%%%%%%%%%%%%%%%%%
%%%%%%%%%%%%%%%%%%%%%%%%%%%%%%%%%%%%%%%%%%%%%%%%%%%%%%%%%%%%%%%%%%%%%%%%%%%%%%%
\footnotesize


\begin{thebibliography}{aa}
%
\bibitem{ABS}{\sc M. Aguiar, N. Bergeron} and {\sc F. Sottile},
{\it Combinatorial Hopf algebras and generalized Dehn-Sommerville relations},
Compos. Math. {\bf 142}  (2006), 1--30.
%
\bibitem{BR08}{\sc A. Baker} and {\sc B. Richter},
{\it Quasisymmetric functions from a topological point of view},
Math. Scand. {\bf 103} (2008), 208--242.
%
\bibitem{BR}{\sc M. Beck} and {\sc S. Robins},
{\it Computing the Continuous Discretely: Integer-Point Enumeration in
Polyhedra},
Springer-Verlag, New York, 2006.
%
\bibitem{BBG}{\sc F. Bergeron, N. Bergeron} and {\sc A.M. Garsia},
{\it Idempotents for the free Lie algebra and $q$-enumeration},
{\it in} Invariant theory and tableaux, D. Stanton {\it ed.},
IMA Volumes in Mathematics and its Applications, Vol. 19, Springer, 1988.
%
\bibitem{BZ}{\sc N. Bergeron} and {\sc M. Zabrocki},
{\it The Hopf algebras of symmetric functions and quasisymmetric functions
in non-commutative variables are free and cofree},
J. Algebra Appl. {\bf 8} (2009), 581--600. 
%
\bibitem{BE}{\sc V. Buchstaber} and {\sc N. Erokhovets},
{\it Polytopes, Hopf algebras and Quasi-symmetric functions},
arXiv:1011.1536v1 [math.CO].
%
\bibitem{Cha}{\sc F. Chapoton},
{\it The anticyclic operad of moulds},
Internat. Math. Res. Not. 2007:20, Art. ID rnm078, 36pp.
%
\bibitem{Cha2}{\sc F. Chapoton},
{\it A rooted-trees q-series lifting a one-parameter family of Lie idempotents},
Algebra and Number Theory,  {\bf 3} (2009), 611--636. 

\bibitem{Cha1}{\sc F. Chapoton}, {\it
Flows on rooted trees and the Narayana idempotents},
arXiv:1203.1780.
%
\bibitem{CHNT}{\sc F. Chapoton, F. Hivert, J.-C. Novelli}
and {\sc J.-Y. Thibon},
{\it An operational calculus for the mould operad},
Int. Math. Res. Not. IMRN  2008,  no. 9, Art. ID rnn018, 22 pp.
%
\bibitem{NCSF6}{\sc G. Duchamp, F. Hivert, J.-Y. Thibon},
{\it Noncommutative symmetric functions VI: free quasi-symmetric functions and
related algebras},
Internat. J. Alg. Comput. {\bf 12} (2002), 671--717.
%
\bibitem{Dyn} {\sc E.B. Dynkin},
{\it Calculation of the coefficients in the Campbell-Baker-Hausdorff formula},
Dokl. Akad. Nauk. SSSR (N.S.) {\bf 57} (1947), 323--326 (in Russian).
%
\bibitem{EFG}{\sc K. Ebrahimi-Fard} and {\sc L. Guo},
{\it Rota-Baxter algebras in renormalization of perturbative quantum field
theory},
in Universality and renormalization, 47--105, Fields Inst. Commun. {\bf 50}, 
Amer. Math. Soc., Providence, RI, 2007.
%
\bibitem{e1}{\sc J. Ecalle},
{\it Well-behaved convolution averages and their applications to real
resummation},
To appear. The first part of this paper appeared in {\cite{Men1}}.
%  
\bibitem{et1}{\sc J. Ecalle},
{\it Les fonctions r\'esurgentes, volume 1 : Les alg\`ebres de fonctions
r\'esurgentes},
volume 81-05. Publications math\'ematiques d'Orsay, 1981.
%  
\bibitem{e2}{\sc J. Ecalle},
{\it Les fonctions r\'esurgentes, volume 2},
Publications math\'ematiques d'Orsay, 1981.
%
\bibitem{e3}{\sc J. Ecalle},
{\it Les fonctions r\'esurgentes, volume 3},
Publications math\'ematiques d'Orsay, 1985.
%
\bibitem{em1}{\sc J. Ecalle and F. Menous},
{\it Well-behaved convolution averages and the non-accumulation theorem for
limit-cycles}
in {\it The Stokes phenomenon and Hilbert's $16^{th}$ problem},
ed. B.L.J. Braaksma, G.K. Immink, M. van der put (World Scientific, 1996).
%
\bibitem{Feller}{\sc W. Feller}, {\it
An introduction to probability theory and its applications}, vol. II, 2nd ed.,
John Wiley \& sons, New York, 1971. 
%
\bibitem{fra}{\sc A. Frabetti},
{\it Simplicial properties of the set of planar binary trees},
J. Algebraic Combin. {\bf 13}, (2001), 41--65.
%
\bibitem{NCSF1}{\sc I.~M. Gelfand, D. Krob, A. Lascoux, B. Leclerc,
V.~S. Retakh} and {\sc J.-Y. Thibon}.
{\it Noncommutative symmetric functions},
Adv. Math. {\bf 112} (1995), 218--348.
%
\bibitem{Ge} {\sc I. Gessel},
{\it Multipartite P-partitions and inner product of skew Schur functions},
Contemp. Math. {\bf 34} (1984), 289--301.
%
\bibitem{Haz}{\sc M. Hazewinkel},
{\it Formal groups and applications}, 
Academic Press, NY,  (1978).
%
\bibitem{Hiv} {\sc F. Hivert},
{\it Combinatoire des fonctions quasi-sym\'etriques},
Th\`ese de Doctorat, Marne-La-Vall\'ee, 1999.
%
\bibitem{HNT-pbt}{\sc F. Hivert, J.-C. Novelli} and {\sc J.-Y. Thibon},
{\it The algebra of binary search trees},
Theor. Comp. Sci., {\bf 339} n. 1 (2005), 129--165. 
%
\bibitem{HNT}{\sc F. Hivert, J.-C. Novelli} and {\sc J.-Y. Thibon},
{\it Trees, functional equations, and combinatorial Hopf algebras},
European J. Combin.  {\bf 29}  (2008),  no. 7, 1682--1695.
%
\bibitem{Hof}{\sc M.E. Hoffman},
{\it Quasi-shuffle products}, 
J. Algebraic Combin. {\bf 11} (2000),  49--68. 
%
\bibitem{Kl}{\sc A.A. Klyachko},
{\it Lie elements in the tensor algebra},
Siberian Math. J. {\bf 15} (1974), 1296--1304.
%
\bibitem{NCSF2} {\sc D. Krob, B. Leclerc} and {\sc J.-Y. Thibon},
{\it Noncommutative symmetric functions II: Transformations of alphabets},
Internat. J.  Alg.  Comput. {\bf 7} (1997), 181--264.
%
\bibitem{NCSF4}{\sc D. Krob} and {\sc J.-Y. Thibon},
{\it Noncommutative symmetric functions IV: Quantum linear groups and Hecke
algebras at $q=0$},
J. Algebraic Combin. {\bf 6} (1997) n. 4, 339--376.
%
\bibitem{Lod10}{\sc J.-L. Loday},
{\it On the operad of associative algebras with derivation},
Georgian Math. J. {\bf 17} (2010), 347--372.
%
\bibitem{LR1}{\sc J.-L. Loday} and {\sc M.O. Ronco},
{\it Hopf algebra of the planar binary trees},
Adv. Math. {\bf 139} (1998) n. 2, 293--309.
%
\bibitem{LRtri}{\sc J.-L. Loday} and {\sc M.O. Ronco},
{\it Trialgebras and families of polytopes},
Contemporary Mathematics {\bf 346} (2004).
%
\bibitem{MR}{\sc C. Malvenuto} and {\sc C. Reutenauer},
{\it Duality between quasi-symmetric functions and Solomon descent algebra},
J. Algebra {\bf 177} (1995), 967--892.
%%
\bibitem{manchon}{\sc D. Manchon},
{\it Renormalization in connected graded Hopf algebras: an introduction},
Carey, Alan (ed.) et al., Motives, quantum field theory, and pseudodifferential operators, Clay Mathematics Proceedings 12, 73-95 (2010).
%
\bibitem{Man}{\sc A. Mansuy}, {\it Alg\`ebres de greffes},
arXiv:1110.4800. 
%
\bibitem{Men1}{\sc F. Menous},
{\it Les bonnes moyennes uniformisantes et leurs applications \`a la
resommation r\'eelle},
PhD thesis, 1996.

\bibitem{m}{\sc F.Menous},
{\it Les bonnes moyennes uniformisantes et une application \`a la resommation
r\'eelle},
Ann. Fac. Sci. Toulouse, VI. S\'er., Math. 8, No.4, 579--628 (1999).
%
\bibitem{MP}{\sc B. Mielnik} and {\sc J. Pleba\'nski},
{\it Combinatorial approach to Baker-Campbell-Hausdorff exponents},
Ann. Inst. Henri Poincar\'e,
Section A, vol. {\bf XII} (1970), 215--254.
%
\bibitem{NPT}{\sc J.-C. Novelli, F. Patras} and {\sc J.-Y. Thibon},
{\it Natural endomorphisms of quasi-shuffle Hopf algebras}, 
arXiv:1101.0725v1 [math.CO].
%
\bibitem{NTtri}{\sc J.-C. Novelli} and {\sc J.-Y. Thibon},
{\it Polynomial realizations of some trialgebras},
Proc.  FPSAC'06, San-Diego, 2006.
%
\bibitem{NTpark}{\sc J.-C. Novelli} and {\sc J.-Y. Thibon},
{\it Hopf algebras and dendriform structures arising from parking functions},
Funda. Math. {\bf 193} (2007), 189--241 
%
\bibitem{NT1} {\sc J.-C. Novelli} and {\sc J.-Y. Thibon},
{\it A Hopf algebra of parking functions},
Proc. FPSAC'04, Vancouver.
%
\bibitem{Re}{\sc C. Reutenauer},
{\it Free Lie algebras}, Oxford, 1993.
%
\bibitem{sauzin}{\sc D. Sauzin},
{\it Mould expansions for the saddle-node and resurgence monomials},
in ``Renormalization and {G}alois theories'',
{IRMA Lect. Math. Theor. Phys.}, {\bf 15} (2009), 83--163.
%
\bibitem{So1}{\sc L. Solomon},
{\it On the Poincar\'e-Birkhoff-Witt theorem},
J. Comb. Theory {\bf 4} (1968), 363--375.
%
\bibitem{Sol}{\sc L. Solomon},
{\it A Mackey formula in the group ring of a Coxeter group},
J. Algebra {\bf 41} (1976), 255--268.
%
\bibitem{Spe}{\sc W. Specht},
{\it Die linearen Beziehungen zwischen h\"oheren Kommutatoren},
Math. Zeit. {\bf 51} (1948), 367--376.
%
\bibitem{Wev}{\sc F. Wever},
{\it \"Uber Invarianten in Lieschen Ringen},
Math. Annalen {\bf 120} (1949), 563--580.
%
\bibitem{Wil}{\sc R.M. Wilcox},
{\it Exponential operators and parameter differentiation in Quantum Physics},
 J. Math. Phys.  {\bf 8} (1967), 962--982.
\end{thebibliography}
\end{document}